\documentclass[oneside,english]{amsart}
\usepackage[T1]{fontenc}
\usepackage[latin9]{inputenc}
\usepackage{geometry}
\geometry{verbose,tmargin=2cm,bmargin=3cm,lmargin=2.7cm,rmargin=2.7cm}
\usepackage{amstext}
\usepackage{amsthm}
\usepackage{amssymb}

\makeatletter
\numberwithin{equation}{section}
\numberwithin{figure}{section}
 \theoremstyle{definition}
 \newtheorem*{defn*}{\protect\definitionname}
\theoremstyle{plain}
\newtheorem{thm}{\protect\theoremname}[section]
  \theoremstyle{remark}
  \newtheorem{notation}[thm]{\protect\notationname}
  \theoremstyle{remark}
  \newtheorem{rem}[thm]{\protect\remarkname}
  \theoremstyle{plain}
  \newtheorem{prop}[thm]{\protect\propositionname}
  \theoremstyle{definition}
  \newtheorem{example}[thm]{\protect\examplename}
  \theoremstyle{plain}
  \newtheorem{lem}[thm]{\protect\lemmaname}
  \theoremstyle{definition}
  \newtheorem{defn}[thm]{\protect\definitionname}
  \theoremstyle{plain}
  \newtheorem{question}[thm]{\protect\questionname}

\usepackage[all]{xy}
\usepackage[backref=page,colorlinks, linktocpage, urlcolor=blue, citecolor = blue, linkcolor = blue]{hyperref}
\usepackage{graphicx}
\usepackage{tikz}
\usepackage{tikz-cd}

\makeatother

\usepackage{babel}
  \providecommand{\definitionname}{Definition}
  \providecommand{\examplename}{Example}
  \providecommand{\lemmaname}{Lemma}
  \providecommand{\notationname}{Notation}
  \providecommand{\propositionname}{Proposition}
  \providecommand{\questionname}{Question}
  \providecommand{\remarkname}{Remark}
\providecommand{\theoremname}{Theorem}

\begin{document}

\author{Adrien Dubouloz}

\address{IMB UMR5584, CNRS, Univ. Bourgogne Franche-Comté, F-21000 Dijon,
France.}

\email{adrien.dubouloz@u-bourgogne.fr}

\author{Isac Hedén}

\address{Research Institute for Mathematical Sciences, Kyoto University, Kyoto
606-8502, Japan}

\email{Isac.Heden@kurims.kyoto-u.ac.jp}

\author{Takashi Kishimoto }

\address{Department of Mathematics, Faculty of Science, Saitama University,
Saitama 338-8570, Japan }

\email{tkishimo@rimath.saitama-u.ac.jp}

\thanks{This work was partially funded by Grant-in-Aid for JSPS Fellows Number
15F15751and Grant-in-Aid for Scientific Research of JSPS No. 15K04805.
The research was done during visits of the first and second authors
at Saitama University, and during visits of the third author at the
Institut de Mathématiques de Bourgogne. The authors thank these institutions
for their generous supports and the excellent working conditions offered.}

\subjclass[2000]{14R20; 14R25; 14R05; 14L30, 14D06}

\title{Equivariant extensions of $\mathbb{G}_{a}$-torsors over punctured
surfaces}
\begin{abstract}
Motivated by the study of the structure of algebraic actions the additive
group on affine threefolds $X$, we consider a special class of such
varieties whose algebraic quotient morphisms $X\rightarrow X/\!/\mathbb{G}_{a}$
restrict to principal homogeneous bundles over the complement of a
smooth point of the quotient. We establish basic general properties
of these varieties and construct families of examples illustrating
their rich geometry. In particular, we give a complete classification
of a natural subclass consisting of threefolds $X$ endowed with proper
$\mathbb{G}_{a}$-actions, whose algebraic quotient morphisms $\pi:X\rightarrow X/\!/\mathbb{G}_{a}$
are surjective with only isolated degenerate fibers, all isomorphic
to the affine plane $\mathbb{A}^{2}$ when equipped with their reduced
structures. 
\end{abstract}

\maketitle

\section*{Introduction}

Algebraic actions of the complex additive group $\mathbb{G}_{a}=\mathbb{G}_{a,\mathbb{C}}$
on normal complex affine surfaces $S$ are essentially fully understood:
the ring of invariants $\mathcal{O}(S){}^{\mathbb{G}_{a,\mathbb{C}}}$
is a finitely generated algebra whose spectrum is a smooth affine
curve $C=S/\!/\mathbb{G}_{a}$, and the inclusion $\mathcal{O}(S)^{\mathbb{G}_{a}}\subset\mathcal{O}(S)$
defines a surjective morphism $\pi:S\rightarrow C$ whose general
fibers coincide with general orbits of the action, hence are isomorphic
to the affine line $\mathbb{A}^{1}$ on which $\mathbb{G}_{a}$ acts
by translations. The degenerate fibers of such $\mathbb{A}^{1}$-fibrations
are known to consist of finite disjoint unions of smooth affine curves
isomorphic to $\mathbb{A}^{1}$ when equipped with their reduced structure.
A complete description of isomorphism classes of germs of invariant
open neighborhoods of irreducible components of such fibers was established
by Fieseler \cite{Fie94}. 

In contrast, very little is known so far about the structure of $\mathbb{G}_{a}$-actions
on complex normal affine threefolds. For such a threefold $X$, the
ring of invariants $\mathcal{O}(X){}^{\mathbb{G}_{a}}$ is again finitely
generated \cite{Na59} and the morphism $\pi:X\rightarrow S$ induced
by the inclusion $\mathcal{O}(X)^{\mathbb{G}_{a}}\subset\mathcal{O}(X)$
is an $\mathbb{A}^{1}$-fibration over a normal affine surface $S$.
But in general, $\pi$ is neither surjective nor equidimensional.
Furthermore, it can have degenerate fibers over closed subsets of
pure codimension $1$ as well as of codimension $2$. All of these
possible degeneration are illustrated by the following example:

The restriction of the projection $\mathrm{pr}_{x,y}$ to the smooth
threefold $X=\{x^{2}(x-1)v+yu^{2}-x=0\}$ in $\mathbb{A}^{4}$ is
an $\mathbb{A}^{1}$-fibration $\pi:X\rightarrow\mathbb{A}^{2}$ which
coincides with the algebraic quotient morphism of the $\mathbb{G}_{a}$-action
on $X$ associated to the locally nilpotent derivation $\partial=x^{2}(x-1)\partial_{u}-2yu\partial_{v}$
of its coordinate ring. The restriction of $\pi$ over the principal
open subset $x^{2}(x-1)\neq0$ of $\mathbb{A}^{2}$ is a trivial principal
$\mathbb{G}_{a}$-bundle, but the fibers of $\pi$ over the points
$(1,0)$ and $(0,0)$ are respectively empty and isomorphic to $\mathbb{A}^{2}$.
Furthermore, for every $y_{0}\neq0$, the inverse images under $\pi$
of the points $(0,y_{0})$ and $(1,y_{0})$ are respectively isomorphic
to $\mathbb{A}^{1}$ but with multiplicity $2$, and to the disjoint
union of two reduced copies of $\mathbb{A}^{1}$.

Partial results concerning the structure of one-dimensional degenerate
fibers of $\mathbb{G}_{a}$-quotient $\mathbb{A}^{1}$-fibrations
were obtained by Gurjar-Masuda-Miyanishi \cite{GMM12}. In the present
article, as a step towards the understanding of the structure of two-dimensional
degenerate fibers, we consider a particular type of non equidimensional
surjective $\mathbb{G}_{a}$-quotient $\mathbb{A}^{1}$-fibrations
$\pi:X\rightarrow S$ which have the property that they restrict to
$\mathbb{G}_{a}$-torsors\footnote{sometimes also referred to as Zariski locally trivial principal $\mathbb{G}_{a}$-bundles}
 over the complement of a finite set of smooth points in $S$. These
are simpler than the general case illustrated in the previous example
since they do not admit additional degeneration of their fibers over
curves in $S$ passing through the given points. The local and global
study of some classes of such fibrations was initiated by the second
author \cite{He15}. He constructed in particular many examples of
$\mathbb{G}_{a}$-quotient $\mathbb{A}^{1}$-fibrations on smooth
affine threefolds $X$ with image $\mathbb{A}^{2}$ whose restrictions
over the complement of the origin are isomorphic to the geometric
quotient $\mathrm{SL}_{2}\rightarrow\mathrm{SL}_{2}/\mathbb{G}_{a}$
of $\mathrm{SL}_{2}$ by the action of unitary upper triangular matrices. 

One of the simplest examples of this type is the smooth threefold
$X_{0}\subset\mathbb{A}_{x,y,p,q,r}^{5}$ defined by the equations
\[
X_{0}:\qquad\begin{cases}
xr-yq & =0\\
yp-x(q-1) & =0\\
pr-q(q-1) & =0
\end{cases}
\]
and equipped with the $\mathbb{G}_{a}$-action associated to the locally
nilpotent $\mathbb{C}[x,y]$-derivation $x^{2}\partial_{p}+xy\partial_{q}+y^{2}\partial_{r}$
of its coordinate ring. The equivariant open embedding $\mathrm{SL}_{2}=\{xv-yu=1\}\hookrightarrow X_{0}$
is given by $(x,y,u,v)\mapsto(x,y,xu,xv,yv)$. The $\mathbb{G}_{a}$-quotient
morphism coincides with the surjective $\mathbb{A}^{1}$-fibration
$\pi_{0}:\mathrm{pr}_{x,y}:X_{0}\rightarrow\mathbb{A}^{2}$. Its restriction
over $\mathbb{A}^{2}\setminus\{(0,0)\}$ is isomorphic to the quotient
morphism $\mathrm{SL}_{2}\rightarrow\mathrm{SL}_{2}/\mathbb{G}_{a}$,
while its fiber over $(0,0)$ is the smooth quadric $\{pr-q(q-1)=0\}\subset\mathbb{A}_{p,q,r}^{3}$,
isomorphic to the quotient $\mathrm{SL}_{2}/\mathbb{G}_{m}$ of $\mathrm{SL}_{2}$
by the action of its diagonal torus (see Example \ref{subsec:Homogeneous-Ga-torsors}).
A noteworthy property of this example is that the $\mathbb{G}_{a}$-quotient
morphism $\pi:X_{0}\rightarrow\mathbb{A}^{2}$ factors through a locally
trivial $\mathbb{A}^{1}$-bundle $\rho:X_{0}\rightarrow\tilde{\mathbb{A}}^{2}$
over the the blow-up $\tau:\tilde{\mathbb{A}}^{2}\rightarrow\mathbb{A}^{2}$
of the origin. 

It is a general fact that every irreducible component of a degenerate
fiber of pure codimension one of a $\mathbb{G}_{a}$-quotient $\mathbb{A}^{1}$-fibration
$\pi:X\rightarrow S$ on a smooth affine threefold is an $\mathbb{A}^{1}$-uniruled
affine surface (see Proposition \ref{prop:Uniruled-central-fiber}).
We do not know whether every $\mathbb{A}^{1}$-uniruled surface can
be realized as an irreducible component of the degenerate fiber of
a $\mathbb{G}_{a}$-extension. But besides the smooth affine quadric
$\mathrm{SL}_{2}/\mathbb{G}_{m}$ appearing in the previous example,
the following one confirms that the affine plane $\mathbb{A}^{2}$
can also be realized (see also Examples \ref{exa:2-components-fiber}
and \ref{exa:A1ruled-fiber} for other types of surfaces that can
be realized): Let $X_{1}\subset\mathbb{A}_{x,y,z_{1}z_{2},w}^{5}$
be the smooth affine threefold defined by the equations 
\[
X_{1}:\qquad\begin{cases}
xw-y(yz_{1}+1) & =0\\
xz_{2}-z_{1}(yz_{1}+1) & =0\\
z_{1}w-yz_{2} & =0,
\end{cases}
\]
equipped with the $\mathbb{G}_{a}$-action associated to the locally
nilpotent $\mathbb{C}[x,y]$-derivation $x\partial_{z_{1}}+(2yz_{1}+1)\partial_{z_{2}}+y^{2}\partial_{w}$
of its coordinate ring. The morphism $\mathrm{SL}_{2}\hookrightarrow X_{1}$
given by $(x,y,u,v)\mapsto(x,y,u,uv,yv)$ isequivariant open embedding.
The $\mathbb{G}_{a}$-quotient morphism coincides with the surjective
$\mathbb{A}^{1}$-fibration $\pi_{1}=\mathrm{pr}_{x,y}:X_{1}\rightarrow\mathbb{A}^{2}$,
whose fiber over the origin is the affine plane $\mathbb{A}^{2}=\mathrm{Spec}(\mathbb{C}[z_{2},w])$
and whose restriction over $\mathbb{A}^{2}\setminus\{(0,0)\}$ is
again isomorphic to the quotient morphism $\mathrm{SL}_{2}\rightarrow\mathrm{SL}_{2}/\mathbb{G}_{a}$.
A special additional feature is that the $\mathbb{G}_{a}$-action
on $X_{1}$ extending that on $\mathrm{SL}_{2}$ is not only fixed
point free but actually \emph{proper}: its geometric quotient $X_{1}/\mathbb{G}_{a}$
is separated. One can indeed check that $X_{1}/\mathbb{G}_{a}$ is
isomorphic to the complement $\tilde{\mathbb{A}}^{2}\setminus\{o_{1}\}$
of a point $o_{1}$ supported on the exceptional divisor $E$ of the
blow-up $\tilde{\mathbb{A}}^{2}$ of $\mathbb{A}^{2}$ at the origin
(see Example \ref{exa:ProperExtension}). \\

Relaxing the hypothesis that the $\mathbb{A}^{1}$-fibration $\pi:X\rightarrow S$
arises as the quotient of a $\mathbb{G}_{a}$-action on an affine
threefold $X$ to consider the broader problem of describing the geometry
of degeneration of $\mathbb{A}^{1}$-fibrations over irreducible closed
subsets of pure codimension two of their base, we are led to the following
more general notion:
\begin{defn*}
Let $(S,o)$ be a pair consisting of a normal separated $2$-dimensional
scheme $S$ essentially of finite type over a field $k$ of characteristic
zero and of a closed point $o$ contained in the smooth locus of $S$.
A $\mathbb{G}_{a}$-\emph{extension} of a $\mathbb{G}_{a}$-torsor
$\rho:P\rightarrow S\setminus\{o\}$ is a $\mathbb{G}_{a}$-equivariant
open embedding $j:P\hookrightarrow X$ into an integral scheme $X$
equipped with a surjective morphism $\pi:X\rightarrow S$ of finite
type and a $\mathbb{G}_{a,S}$-action, such that the commutative diagram
\[\begin{tikzcd}    P \arrow[r,hook,"j"] \arrow[d,"\rho"'] & X \arrow[d,"\pi"] \\ S\setminus\{o\} \arrow[r,hook] & S \end{tikzcd}\]
is cartesian.
\end{defn*}
The examples $X_{0}$ and $X_{1}$ above provide motivation to study
the following natural classes of $\mathbb{G}_{a}$-extensions $\pi:X\rightarrow S$
of a $\mathbb{G}_{a}$-torsor $\rho:P\rightarrow S\setminus\{o\}$,
which are arguably the simplest possible types of $\mathbb{G}_{a}$-extensions
from the viewpoints of their global geometry and of the properties
of their $\mathbb{G}_{a}$-actions:

- (Type I) Extensions for which $\pi$ factors through a locally trivial
$\mathbb{A}^{1}$-bundle over the blow-up $\tau:\tilde{S}\rightarrow S$
of the point $o$, the fiber $\pi^{-1}(o)$ being then the total space
of a locally trivial $\mathbb{A}^{1}$-bundle over the exceptional
divisor of $\tau$. 

- (Type II) Extensions for which $\pi^{-1}(o)_{\mathrm{red}}$ is
isomorphic to the affine plane $\mathbb{A}_{\kappa}^{2}$ over the
residue field $\kappa$ of $S$ at $o$, $X$ is smooth along $\pi^{-1}(o)$
and the $\mathbb{G}_{a,S}$-action on $X$ is proper. \\

The first main result of this article, Proposition \ref{prop:Blowup-Extension}
and Theorem \ref{prop:Main-Ext-Prop}, is a complete description of
$\mathbb{G}_{a}$-extensions of Type I together with an effective
characterization of which among them have the additional property
that the morphism $\pi:X\rightarrow S$ is affine. Our second main
result, Theorem \ref{thm:Main-TheoremA2}, consists of a classification
of $\mathbb{G}_{a}$-extensions of Type II, under the additional assumption
that the morphism $\pi:X\rightarrow S$ is quasi-projective. More
precisely, given a $\mathbb{G}_{a}$-torsor $\rho:P\rightarrow S\setminus\{o\}$
and a $\mathbb{G}_{a}$-extension $\pi:X\rightarrow S$ with proper
$\mathbb{G}_{a,S}$-action and reduced fiber $\pi^{-1}(o)_{\mathrm{red}}$
isomorphic to $\mathbb{A}_{\kappa}^{2}$, we establish that the possible
geometric quotients $S'=X/\mathbb{G}_{a}$ belong to a very special
class of surfaces isomorphic to open subsets of blow-ups of $S$ with
centers over $o$ which we fully describe in $\S$ \ref{subsec:ExtensionFamilies}.
We show conversely that every such surface is indeed the geometric
quotient of a $\mathbb{G}_{a}$-extension of $\rho:P\rightarrow S\setminus\{o\}$
with the desired properties. 

In a second step, we tackle the question of existence of $\mathbb{G}_{a}$-extensions
$\pi:X\rightarrow S$ of Type II for which the structure morphism
$\pi$ is not only quasi-projective but affine. Our method to produce
extensions with this property is inspired by the observation that
the threefolds $X_{0}$ and $X_{1}$ above are not only birational
to each other due to the property that they both contain $\mathrm{SL}_{2}$
as open subset, but in fact that the birational morphism 
\[
\eta:X_{1}\rightarrow X_{0},\quad(x,y,z_{1},z_{2},w)\mapsto(x,y,p,q,r)=(x,y,xz_{1},yz_{1}+1,w)
\]
expresses $X_{1}$ as a $\mathbb{G}_{a}$-equivariant affine modification
of $X_{0}$ in the sense of Kaliman and Zaidenberg \cite{KaZa99}.
This suggests that extensions of Type II for which $X$ is affine
over $S$ could be obtained as equivariant affine modification in
a suitable generalized sense from extensions of Type I with the same
property. Using this technique, we are able to show in Theorem \ref{thm:Affine-Ga-extensions-A2}
that for each possible geometric quotient $S'$ above, there exist
$\mathbb{G}_{a}$-extensions $\pi:X\rightarrow S$ of $\rho:P\rightarrow S\setminus\{o\}$
with geometric quotient $X/\mathbb{G}_{a}=S'$ such that $\pi$ is
an affine morphism. 

As an application towards the initial question of the structure $\mathbb{G}_{a}$-quotient
$\mathbb{A}^{1}$-fibrations on affine threefolds, we in particular
derive from this construction the existence of uncountably many pairwise
non isomorphic smooth affine threefolds $X$ endowed with proper $\mathbb{G}_{a}$-actions,
containing $\mathrm{SL}_{2}$ as an invariant open subset with complement
$\mathbb{A}^{2}$, whose geometric quotients are smooth quasi-projective
surfaces which are not quasi-affine, and whose algebraic quotients
are all isomorphic to $\mathbb{A}^{2}$. \\

The scheme of the article is the following. The first section begins
with a review of general properties of $\mathbb{G}_{a}$-extensions.
We then set up the basic tools which will be used through all the
article: locally trivial $\mathbb{A}^{1}$-bundles with additive group
actions and equivariant affine birational morphisms between these.
In section two, we study $\mathbb{G}_{a}$-extensions of Type I. The
last section is devoted to the classification of quasi-projective
$\mathbb{G}_{a}$-extensions of Type II. 

\tableofcontents{}

\section{Preliminaries }
\begin{notation}
In the rest of the article, the term \emph{surface }refers to a normal
separated $2$-dimensional scheme essentially of finite type over
a field $k$ of characteristic zero. A \emph{punctured surface} $S_{*}=S\setminus\{o\}$
is the complement of a closed point $o$ contained in the smooth locus
of a surface $S$. We denote by $\kappa$ the residue field of $S$
at $o$. 
\end{notation}

\begin{rem}
We do not require that the residue field $\kappa$ of $S$ at $o$
is an algebraic extension of $k$. For instance, $S$ can very well
be the spectrum of the local ring $\mathcal{O}_{X,Z}$ of an arbitrary
smooth $k$-variety $X$ at an irreducible closed subvariety $Z$
of codimension two in $X$ and $o$ its unique closed point, in which
case the residue field $\kappa$ is isomorphic to the field of rational
functions on $Z$. 
\end{rem}

In this section, we first review basic geometric properties of equivariant
extensions of $\mathbb{G}_{a}$-torsors over punctured surfaces. We
then collect various technical results on additive group actions on
affine-linear bundles of rank one and their behavior under equivariant
affine modifications. 

\subsection{Equivariant extensions of $\mathbb{G}_{a}$-torsors }

A $\mathbb{G}_{a}$-torsor over punctured surface $S_{*}=S\setminus\{o\}$
is an $S_{*}$-scheme $\rho:P\rightarrow S_{*}$ equipped with a $\mathbb{G}_{a}$-action
$\mu:\mathbb{G}_{a,S_{*}}\times_{S_{*}}P\rightarrow P$ for which
there exists a Zariski open cover $f:Y\rightarrow S_{*}$ of $S_{*}$
such that $P\times_{S_{*}}Y$ is equivariantly isomorphic to $\mathbb{G}_{a,Y}$
acting on itself by translations. In the present article, we primarily
focus on $\mathbb{G}_{a}$-torsors $\rho:P\rightarrow S_{*}$ whose
restrictions $P\times_{S_{*}}U\rightarrow U\setminus\{o\}$ over every
Zariski open neighborhood $U$ of $o$ in $S$ are nontrivial. Since
in this case the total space of $P$ is affine over $S$ (see e.g.
\cite[Proposition 1.2]{DF14} whose proof carries over verbatim to
our more general situation), it follows that for every $\mathbb{G}_{a}$-extension
$j:P\hookrightarrow X$ the fiber $\pi^{-1}(o)\subset X$ of the surjective
morphism $\pi:X\rightarrow S$ has pure codimension one in $X$. Two
important families of examples of non trivial normal $\mathbb{G}_{a}$-extensions
$j:\mathrm{SL}_{2}\rightarrow X$ of the $\mathbb{G}_{a}$-torsor
$\rho:\mathrm{SL}_{2}\rightarrow\mathrm{SL}_{2}/\mathbb{G}_{a}\simeq\mathbb{A}^{2}\setminus\{(0,0)\}$,
where $\mathbb{G}_{a}$ acts on $\mathrm{SL}_{2}$ via left multiplication
by upper triangular unipotent matrices, were constructed in \cite[Section 5 and 6]{He15}.
Various other extensions were obtained from these by performing suitable
equivariant affine modifications. One can observe that for all these
extensions, the fiber $\pi^{-1}(\{(0,0)\})$ is an $\mathbb{A}^{1}$-ruled
surface, a property which is a consequence of the following more general
fact: 
\begin{prop}
\label{prop:Uniruled-central-fiber} Let $\rho:P\rightarrow S_{*}$
be a non trivial $\mathbb{G}_{a}$-torsor over the punctured spectrum
$S\setminus\{o\}$ of a regular local ring of dimension $2$ over
an algebraically closed field $k$ and with residue field $\kappa(o)=k$,
and let $\pi:X\rightarrow S$ be a $\mathbb{G}_{a}$-extension of
$P$. If $X$ is smooth along $\pi^{-1}(o)$, then every irreducible
component $F$ of $\pi^{-1}(o)_{\mathrm{red}}$ is a uniruled surface.
Furthermore, if $X$ is affine then $F$ is $\mathbb{A}^{1}$-uniruled,
hence $\mathbb{A}^{1}$-ruled when it is normal. 
\end{prop}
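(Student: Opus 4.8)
The plan is to exploit the fact that $P$ is a nontrivial $\mathbb{G}_{a}$-torsor to produce a rational curve through a general point of each component $F$ of $\pi^{-1}(o)_{\mathrm{red}}$. Since $S = \mathrm{Spec}(A)$ for a regular local ring $A$ of dimension $2$ with residue field $k$, the punctured surface $S_* = S \setminus \{o\}$ carries a natural family of curves: the strict transforms of the general members of a pencil, or more concretely, the images in $S$ of curves meeting $o$. The key geometric input is that every curve $C \subset S$ passing through $o$ meets the special fiber. I would first reduce to studying the total space $X$ together with the $\mathbb{G}_{a,S}$-action, and observe that because $\pi$ is $\mathbb{G}_{a}$-equivariant and surjective, the orbits of the $\mathbb{G}_{a}$-action contained in $\pi^{-1}(o)$ sweep out the central fiber.

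\textbf{The main argument via orbit closures and limit curves.}

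First I would take a smooth curve germ $C \subset S$ through $o$, transverse to nothing in particular, so that $C_* = C \setminus \{o\} \subset S_*$. The restriction $P \times_{S_*} C_* \to C_*$ is a $\mathbb{G}_{a}$-torsor over a punctured curve germ, hence over a punctured spectrum of a DVR, and the total space $P_C := P \times_{S_*} C_*$ is an affine surface mapping to $C_*$ with $\mathbb{A}^1$-fibers. Its closure (or the fiber product $X \times_S C$) produces a surface $X_C$ over $C$ whose special fiber is contained in $\pi^{-1}(o)$. The torsor $P_C \to C_*$ being a $\mathbb{G}_{a}$-torsor over $\mathrm{Spec}$ of the punctured DVR, it is classified by $H^1(C_*, \mathcal{O}) $ and its total space is, generically, ruled by the $\mathbb{G}_{a}$-orbits, which are copies of $\mathbb{A}^1$. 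The central fiber of $X_C \to C$ then receives, as a flat limit, the limits of these $\mathbb{A}^1$-orbits, and I would argue these limits are nonconstant rational curves sweeping out a dense subset of $F$. Varying $C$ through a general point $x \in F$ gives a rational curve through $x$, establishing uniruledness.

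\textbf{The affine refinement and the hard step.}

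For the second statement, when $X$ is affine, I would upgrade "uniruled" to "$\mathbb{A}^1$-uniruled" by arguing that the rational curves produced above are actually images of $\mathbb{A}^1$: since $X$ is affine, any complete rational curve would be constant, so the limit curves in $F$, being contained in an affine variety, must be affine rational curves, i.e. images of $\mathbb{A}^1$ (after normalization a complete rational curve cannot embed in an affine variety, so the members of the covering family are necessarily affine lines or images thereof). The passage from $\mathbb{A}^1$-uniruled to $\mathbb{A}^1$-ruled when $F$ is normal is then a standard structural result: a normal $\mathbb{A}^1$-uniruled surface admits an $\mathbb{A}^1$-fibration, which I would invoke from the theory of affine ruled surfaces (e.g.\ Miyanishi's characterization). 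The main obstacle I anticipate is controlling the flat limit of the $\mathbb{G}_{a}$-orbits as one approaches $o$: one must ensure the limiting cycle in $\pi^{-1}(o)$ genuinely contains a \emph{moving} rational curve through a general point of $F$ rather than collapsing onto a lower-dimensional locus or degenerating into a fixed curve. Handling this carefully—likely by using the nontriviality of the torsor to guarantee that the orbit map does not extend to a morphism over $o$, forcing a nonconstant limit—will be the technical heart of the proof, and smoothness of $X$ along $\pi^{-1}(o)$ should be used precisely to guarantee that $F$ is a genuine surface with well-behaved flat limits.
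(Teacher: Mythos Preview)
Your overall strategy---restrict to curves through $o$, look at the resulting $\mathbb{A}^{1}$-fibered surface, and study the degeneration of the $\mathbb{A}^{1}$-fibers over the special point---is exactly the paper's approach. But you have correctly identified the gap in your own argument without closing it: starting from a curve $C\subset S$ through $o$, there is no mechanism ensuring the limit rational curve in $\pi^{-1}(o)$ passes through a prescribed point $x\in F$. Your proposed fix (invoking nontriviality of the torsor to force a ``nonconstant limit'') addresses the wrong issue: nontriviality is what guarantees $\pi^{-1}(o)$ has pure codimension one (so $F$ is a surface), not that your degenerating family hits a given point.

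The paper resolves this by reversing the order of construction. Given a smooth point $x\in F$, use smoothness of $X$ along $\pi^{-1}(o)$ to choose a curve $C\subset X$ through $x$ meeting $F$ transversally at $x$; \emph{then} set the base curve to be $\pi(C)\subset S$ and let $B$ be the closure of $\pi^{-1}(\pi(C)\cap S_{*})$ in $X$. By construction $C\subset B$, hence $x\in B$, so the special fiber of $\pi|_{B}:B\to\pi(C)$ over $o$ contains $x$. Passing to the normalization $\tilde{B}\to\tilde{C}$ one gets an $\mathbb{A}^{1}$-fibration on a normal surface, whose special fiber is a union of rational curves; finiteness of normalization then gives a rational curve in $F$ through $x$. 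In the affine case, $\tilde{B}$ is affine, so the fibers of $\tilde{B}\to\tilde{C}$ are disjoint unions of affine lines, yielding $\mathbb{A}^{1}$-uniruledness directly. This is precisely where smoothness of $X$ along $\pi^{-1}(o)$ enters---to produce the transversal curve through $x$ upstairs---rather than, as you suggested, to control flat limits.
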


\begin{proof}
Since $\pi^{-1}(o)$ has pure codimension one in $X$ and $X$ is
smooth along $\pi^{-1}(o)$, every irreducible component of $\pi^{-1}(o)$
is a $\mathbb{G}_{a}$-invariant Cartier divisor on $X$. The complement
$X'$ in $X$ of all but one irreducible component of $\pi^{-1}(o)$
is thus again a $\mathbb{G}_{a}$-extension of $P$, and we may therefore
assume without loss of generality that $F=\pi^{-1}(o)_{\mathrm{red}}$
is irreducible. Let $x\in F$ be a closed point in the regular locus
of $F$. Since $F$ and $X$ are smooth at $x$ and $X$ is connected,
there exists a curve $C\subset X$, smooth at $x$ and intersecting
$F$ transversally at $x$. The image $\pi(C)$ of $C$ is a curve
on $S$ passing through $o$, and the closure $B$ of $\pi^{-1}(\pi(C)\cap S_{*})$
in $X$ is a surface containing $C$. Since $\rho:P\rightarrow S_{*}$
is a $\mathbb{G}_{a}$-torsor, the restriction of $\pi$ to $B\cap P$
is a trivial $\mathbb{G}_{a}$-torsor over the affine curve $\pi(C)$.
So $\pi\mid_{B}:B\rightarrow\pi(C)$ is an $\mathbb{A}^{1}$-fibration.
Let $\nu:\tilde{C}\rightarrow\pi(C)$ be the normalization of $\pi(C)$.
Then $\pi\mid_{B}$ lifts to an $\mathbb{A}^{1}$-fibration $\theta:\tilde{B}\rightarrow\tilde{C}$
on the normalization $\tilde{B}$ of $B$. The fiber of $\theta$
over every point in $\nu^{-1}(o)$ is a union of rational curves.
Since the normalization morphism $\mu:\tilde{B}\rightarrow B$ is
surjective, one of the irreducible components of $\nu^{-1}(o)$ is
mapped by $\mu$ onto a rational curve in $F$ passing through $x$.
This shows that for every smooth closed point $x$ of $F$, there
exists a non constant rational map $h:\mathbb{P}^{1}\dashrightarrow F$
such that $x\in h(\mathbb{P}^{1})$. Thus $F$ is uniruled. If $X$
is in addition affine, then $B$ and $\tilde{B}$ are affine surfaces,
and the fibers of the $\mathbb{A}^{1}$-fibration $\theta:\tilde{B}\rightarrow\tilde{C}$
consist of disjoint union of curves isomorphic to $\mathbb{A}^{1}$
when equipped with their reduced structure. This implies that $F$
is not only uniruled but actually $\mathbb{A}^{1}$-uniruled. 
\end{proof}
\begin{example}
\label{exa:2-components-fiber}Let $X$ be the smooth affine threefold
in $\mathbb{A}^{2}\times\mathbb{A}^{4}=\mathrm{Spec}(k[x,y][c,d,e,f])$
defined by the equations 
\[
\begin{cases}
xd-y(c+1) & =0\\
xc^{2}-y^{2}e & =0\\
yf-c(c+1) & =0\\
xf^{2}-(c+1)^{2}e & =0\\
de-cf & =0
\end{cases}
\]
equipped with the $\mathbb{G}_{a}$-action induced by the locally
nilpotent $k[x,y]$-derivation 
\[
xy\partial_{c}+y^{2}\partial_{d}+x(2c+1)\partial_{f}+(2x^{2}f-2xye)\partial_{e}
\]
of its coordinate ring. The morphism $j:\mathrm{SL}_{2}=\{xv-yu=1\}\rightarrow X$
defined by $(x,y,u,v)\mapsto(x,y,yu,yv,xu^{2},xuv)$ is an open embedding
of $\mathrm{SL}_{2}$ in $X$ as the complement of the fiber over
$o=(0,0)$ of the projection $\pi=\mathrm{pr}_{x,y}:X\rightarrow\mathbb{A}^{2}$.
So $j:\mathrm{SL}_{2}\rightarrow X$ is an affine $\mathbb{G}_{a}$-extension
of the $\mathbb{G}_{a}$-torsor $\rho:\mathrm{SL}_{2}\rightarrow\mathrm{SL}_{2}/\mathbb{G}_{a}=\mathbb{A}^{2}\setminus\{o\}$,
for which $\pi^{-1}(o)$ consists of the disjoint union of two copies
$D_{1}=\{x=y=c=0\}\simeq\mathrm{Spec}(k[d,f])$ and $D_{2}=\{x=y=c+1=0\}\simeq\mathrm{Spec}(k[d,e])$
of $\mathbb{A}^{2}$. Note that the induced $\mathbb{G}_{a}$-action
on each of these is the trivial one. 
\end{example}

\begin{example}
\label{exa:A1ruled-fiber}Let $X$ be the affine $\mathbb{G}_{a}$-extension
constructed in the previous example and let $C\subset D_{1}$ be any
smooth affine curve. Let $\tau:\tilde{X}\rightarrow X$ be the blow-up
of $X$ along $C$, let $i:X'\hookrightarrow\tilde{X}$ be the open
immersion of the complement of the proper transform of $D_{1}\cup D_{2}$
in $\tilde{X}$ and let $\pi'=\pi\circ\tau\circ i:X'\rightarrow\mathbb{A}^{2}$.
Since $C$ and $D_{1}\cup D_{2}$ are $\mathbb{G}_{a}$-invariant,
the $\mathbb{G}_{a}$-action on $X$ lifts to a $\mathbb{G}_{a}$-action
on $\tilde{X}$ which restricts in turn to $X'$. By construction,
$\pi'$ is surjective, with fiber ${\pi'}^{-1}(o)$ isomorphic to
$C\times\mathbb{A}^{1}$ and $\tau\circ i:X'\rightarrow X$ restricts
to an equivariant isomorphism between $X'\setminus{\pi'}^{-1}(o)$
and $X\setminus\pi^{-1}(o)\simeq\mathrm{SL}_{2}$. So $\pi':X'\rightarrow\mathbb{A}^{2}$
is a $\mathbb{G}_{a}$-extension of the $\mathbb{G}_{a}$-torsor $\rho:\mathrm{SL}_{2}\rightarrow\mathrm{SL}_{2}/\mathbb{G}_{a}=\mathbb{A}^{2}\setminus\{o\}$. 
\end{example}

\subsection{Recollection on affine-linear bundles}

Affine-linear bundles of rank one over a scheme are natural generalization
of $\mathbb{G}_{a}$-torsors. To fix the notation, we briefly recall
their basic definitions and properties 

By a line bundle on a scheme $S$, we mean the relative spectrum $p:M=\mathrm{Spec}(\mathrm{Sym}^{\cdot}\mathcal{M}^{\vee})\rightarrow S$
of the symmetric algebra of the dual of an invertible sheaf of $\mathcal{O}_{S}$-module
$\mathcal{M}$. Such a line bundle $M$ can be viewed as a locally
constant group scheme over $S$ for the group law $m:M\times_{S}M\rightarrow M$
whose co-morphism 
\[
m^{\sharp}:\mathrm{Sym}^{\cdot}\mathcal{M}^{\vee}\rightarrow\mathrm{Sym}^{\cdot}\mathcal{M}^{\vee}\otimes\mathrm{Sym}^{\cdot}\mathcal{M}^{\vee}\simeq\mathrm{Sym}^{\cdot}(\mathcal{M}^{\vee}\oplus\mathcal{M}^{\vee})
\]
is induced by the diagonal homomorphism $\mathcal{M}^{\vee}\rightarrow\mathcal{M}^{\vee}\oplus\mathcal{M}^{\vee}$.
An $M$-\emph{torsor} is then an $S$-scheme $\theta:W\rightarrow S$
equipped with an action $\mu:M\times_{S}W\rightarrow W$ which is
Zariski locally over $S$ isomorphic to $M$ acting on itself by translations. 

This is the case precisely when there exists a Zariski open cover
$f:Y\rightarrow S$ and an $\mathcal{O}_{Y}$-algebra isomorphism
$\psi:f^{*}\mathcal{A}\rightarrow\mathrm{Sym}^{\cdot}f^{*}\mathcal{M}^{\vee}$
such that over $Y'=Y\times_{S}Y$ the automorphism $\mathrm{p}_{1}^{*}\psi\circ\mathrm{p}_{2}^{*}\psi^{-1}:\mathrm{Sym}^{\cdot}\mathcal{M}_{Y'}^{\vee}\rightarrow\mathrm{Sym}^{\cdot}\mathcal{M}_{Y'}^{\vee}$
of the symmetric algebra of $\mathcal{M}_{Y'}^{\vee}=\mathrm{p}_{2}^{*}f^{*}\mathcal{M}^{\vee}=\mathrm{p}_{1}^{*}f^{*}\mathcal{M}^{\vee}$
is \emph{affine-linear}, i.e. induced by an $\mathcal{O}_{Y'}$-module
homomorphism $\mathcal{M}_{Y'}^{\vee}\rightarrow\mathrm{Sym}^{\cdot}\mathcal{M}_{Y'}^{\vee}$
of the form 
\begin{equation}
\beta\oplus\mathrm{id}:\mathcal{M}_{Y'}^{\vee}\rightarrow\mathcal{O}_{Y'}\oplus\mathcal{M}_{Y'}^{\vee}\hookrightarrow\bigoplus_{n\geq0}(\mathcal{M}_{Y'}^{\vee})^{\otimes n}=\mathrm{Sym}^{\cdot}\mathcal{M}_{Y'}^{\vee}\label{eq:AffLine-GluingBis}
\end{equation}
for some $\beta\in\mathrm{Hom}_{Y'}(\mathcal{M}_{Y'}^{\vee},\mathcal{O}_{Y'})\simeq H^{0}(Y',\mathcal{M}_{Y'})$
which is a \v{C}ech $1$-cocyle with values in $\mathcal{M}$ for
the Zariski open cover $f:Y\rightarrow S$. Standard arguments show
that the isomorphism class of $\theta:W\rightarrow S$ depends only
on the class of $\beta$ in the \v{C}ech cohomology group $\check{H}^{1}(S,\mathcal{M})$,
and one eventually gets a one-to-one correspondence between isomorphism
classes of $M$-torsors over $S$ and elements of the cohomology group
$H^{1}(S,M)=H^{1}(S,\mathcal{M})\simeq\check{H}^{1}(S,\mathcal{M})$
with the zero element corresponding to the trivial torsor $p:M\rightarrow S$. 

It is classical that every locally trivial $\mathbb{A}^{1}$-bundle
$\theta:W\rightarrow S$ over a reduced scheme $S$ can be equipped
with the additional structure of a torsor under a uniquely determined
line bundle $M$ on $S$. The existence of this additional structure
will be frequently used in the sequel, and we now quickly review its
construction (see also e.g. \cite[§ 2.3 and § 2.4]{DuTG05}). Letting
$\mathcal{A}=\theta_{*}\mathcal{O}_{W}$, there exists by definition
a Zariski open cover $f:Y\rightarrow S$ and a quasi-coherent $\mathcal{O}_{Y}$-algebra
isomorphism $\varphi:f^{*}\mathcal{A}\rightarrow\mathcal{O}_{Y}[u]$.
Over $Y'=Y\times_{S}Y$ equipped with the two projections $\mathrm{p}_{1}$
and $\mathrm{p}_{2}$ to $Y$, the $\mathcal{O}_{Y'}$-algebra isomorphism
$\Phi=\mathrm{p}_{1}^{*}\varphi\circ\mathrm{p}_{2}^{*}\varphi^{-1}$
has the form 
\begin{equation}
\Phi:\mathcal{O}_{Y'}[u]\rightarrow\mathcal{O}_{Y'}[u],\;u\mapsto au+b\label{eq:Affine-glueing}
\end{equation}
for some $a\in\Gamma(Y',\mathcal{O}_{Y'}^{*})$ and $b\in\Gamma(Y',\mathcal{O}_{Y'})$
whose pull back over $Y''=Y\times_{S}Y\times_{S}Y$ by the three projections
$\mathrm{p}_{12},\mathrm{p}_{23},\mathrm{p}_{13}:Y''\rightarrow Y'$
satisfy the cocycle relations $\mathrm{p}_{13}^{*}a=\mathrm{p}_{23}^{*}a\cdot\mathrm{p}_{12}^{*}a$
and $\mathrm{p}_{13}^{*}b=\mathrm{p}_{23}^{*}a\cdot\mathrm{p}_{12}^{*}b+\mathrm{p}_{23}^{*}b$
in $\Gamma(Y'',\mathcal{O}_{Y''}^{*})$ and $\Gamma(Y'',\mathcal{O}_{Y''})$
respectively. The first one says that $a$ is a \v{C}ech $1$-cocycle
with values in $\mathcal{O}_{S}^{*}$ for the cover $f:Y\rightarrow S$,
which thus determines, via the isomorphism $H^{1}(S,\mathcal{O}_{S}^{*})\simeq\mathrm{Pic}(S)$,
a unique invertible sheaf $\mathcal{M}$ on $S$ together with an
$\mathcal{O}_{Y}$-module isomorphism $\alpha:f^{*}\mathcal{M}^{\vee}\rightarrow\mathcal{O}_{Y}$
such that $\mathrm{p}_{1}^{*}\alpha\circ\mathrm{p}_{2}^{*}\alpha^{-1}:\mathcal{O}_{Y'}\rightarrow\mathcal{O}_{Y'}$
is the multiplication by $a$. The second one can be equivalently
reinterpreted as the fact that $\beta=\mathrm{p}_{2}^{*}({}^{t}\alpha)(b)\in\Gamma(Y',\mathcal{M}_{Y'})$
is a \v{C}ech $1$-cocycle with values in $\mathcal{M}$ for the
Zariski open cover $f:Y\rightarrow S$. Letting $\mathrm{Sym}^{\cdot}(\alpha):\mathrm{Sym}^{\cdot}f^{*}\mathcal{M}^{\vee}\rightarrow\mathcal{O}_{Y}[u]$
be the graded $\mathcal{O}_{Y}$-algebra isomorphism induced by $\alpha$,
the isomorphism $\psi=\mathrm{Sym}^{\cdot}(\alpha^{-1})\circ\varphi:f^{*}\mathcal{A}\rightarrow\mathrm{Sym}^{\cdot}f^{*}\mathcal{M}^{\vee}$
has the property that $\mathrm{p}_{1}^{*}\psi\circ\mathrm{p}_{2}^{*}\psi^{-1}$
is affine-linear, induced by the homomorphism $\beta\oplus\mathrm{id}:\mathcal{M}_{Y'}^{\vee}\rightarrow\mathcal{O}_{Y'}\oplus\mathcal{M}_{Y'}^{\vee}$.
So $\theta:W\rightarrow S$ is a torsor under the line bundle $M=\mathrm{Spec}(\mathrm{Sym}^{\cdot}\mathcal{M}^{\vee})$,
with isomorphism class in $H^{1}(S,M)$ equal to the cohomology class
of the cocyle $\beta$. Summing up, we obtain;
\begin{prop}
\label{prop:A1bundle-torsor}Let $\theta:W\rightarrow S$ be a locally
trivial $\mathbb{A}^{1}$-bundle. Then there exists a unique pair
$(M,g)$ consisting of a line bundle $M$ on $S$ and a class $g\in H^{1}(S,M)$
such that $\theta:W\rightarrow S$ is an $M$-torsor with isomorphism
class $g$. 
\end{prop}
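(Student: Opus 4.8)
The statement asserts existence and uniqueness of a pair $(M,g)$ exhibiting a given locally trivial $\mathbb{A}^{1}$-bundle $\theta:W\to S$ as an $M$-torsor of class $g$. The existence half is essentially carried out in the paragraph preceding the statement, so my plan is to first organize that construction cleanly and then concentrate on uniqueness, which is the part not yet addressed.

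For existence, I would proceed as in the discussion above. Choose a trivializing Zariski open cover $f:Y\to S$ and an isomorphism $\varphi:f^{*}\mathcal{A}\to\mathcal{O}_{Y}[u]$ of the pushforward algebra $\mathcal{A}=\theta_{*}\mathcal{O}_{W}$. The transition automorphism $\Phi=\mathrm{p}_{1}^{*}\varphi\circ\mathrm{p}_{2}^{*}\varphi^{-1}$ of $\mathcal{O}_{Y'}[u]$ must preserve the algebra structure and send $u$ to a degree-one polynomial, hence has the affine-linear form $u\mapsto au+b$ of \eqref{eq:Affine-glueing}. The cocycle relations for $a$ and $b$ follow from the identity $\mathrm{p}_{13}^{*}\Phi=\mathrm{p}_{23}^{*}\Phi\circ\mathrm{p}_{12}^{*}\Phi$ on the triple overlap $Y''$. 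The multiplicative cocycle $a$ defines, via $H^{1}(S,\mathcal{O}_{S}^{*})\simeq\mathrm{Pic}(S)$, an invertible sheaf $\mathcal{M}$ with trivialization $\alpha:f^{*}\mathcal{M}^{\vee}\to\mathcal{O}_{Y}$, and the twisted cocycle relation for $b$ says precisely that $\beta=\mathrm{p}_{2}^{*}({}^{t}\alpha)(b)$ is a \v{C}ech $1$-cocycle with values in $\mathcal{M}$. Setting $M=\mathrm{Spec}(\mathrm{Sym}^{\cdot}\mathcal{M}^{\vee})$ and $g=[\beta]\in H^{1}(S,M)=\check{H}^{1}(S,\mathcal{M})$, one checks that $\psi=\mathrm{Sym}^{\cdot}(\alpha^{-1})\circ\varphi$ has affine-linear transition $\beta\oplus\mathrm{id}$ as in \eqref{eq:AffLine-GluingBis}, which exhibits $\theta$ as an $M$-torsor of class $g$.

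The main obstacle is uniqueness, and here two independent points must be addressed. First, one must see that the construction is independent of the chosen cover and trivialization $\varphi$: replacing $\varphi$ by another trivialization changes $(a,b)$ by a coboundary, so $a$ gives the same class in $\mathrm{Pic}(S)$, hence the same $\mathcal{M}$ up to canonical isomorphism, and $\beta$ changes by a coboundary, hence the same class $g$; refining the cover leaves all classes unchanged by the standard colimit definition of \v{C}ech cohomology. Second, and more substantively, one must show that the abstract datum $(M,g)$ is determined by $\theta$ alone and not merely by our recipe. For this I would argue that $\mathcal{M}$ is intrinsic: the invertible sheaf $\mathcal{M}^{\vee}$ is recovered from $\mathcal{A}=\theta_{*}\mathcal{O}_{W}$ as the degree-one graded piece of the Rees-type filtration, or more directly as the conormal-type sheaf $\mathcal{I}/\mathcal{I}^{2}$ associated to any local section of $\theta$; since $S$ is reduced this description is independent of local choices. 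Once $M$ is pinned down intrinsically, the class $g$ is forced, because the set of $M$-torsor structures on a fixed $\mathbb{A}^{1}$-bundle corresponds bijectively to $H^{1}(S,M)$ with $g$ the class of $\theta$; any two line-bundle structures with the same $M$ and the same underlying $\theta$ must carry the same class. I expect the delicate bookkeeping to lie in verifying that the intrinsic description of $\mathcal{M}$ matches the cocycle $a$ produced by the construction, and in confirming that reducedness of $S$ is exactly what guarantees that the transition data cannot be altered by adding nilpotent contributions to the degree-one part of $\Phi$.
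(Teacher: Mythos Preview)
Your proposal is correct and, for the existence half, follows exactly the construction the paper gives in the paragraphs immediately preceding the proposition; in fact the paper offers no separate proof at all but simply states the proposition as a summary (``Summing up, we obtain'') of that construction. Your uniqueness discussion goes beyond what the paper writes, which leaves uniqueness entirely implicit; your approach via the intrinsic degree filtration on $\mathcal{A}=\theta_{*}\mathcal{O}_{W}$ (so that $F_{1}\mathcal{A}/F_{0}\mathcal{A}\cong\mathcal{M}^{\vee}$) or via the conormal sheaf of a local section is valid, and your closing remark about reducedness is essentially right---it is what forces every $\mathcal{O}_{Y'}$-algebra automorphism of $\mathcal{O}_{Y'}[u]$ to be affine-linear in the first place, so that the transition data lands in the form \eqref{eq:Affine-glueing}.
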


\subsection{Additive group actions on affine-linear bundles of rank one }

Given a locally trivial $\mathbb{A}^{1}$-bundle $\theta:W\rightarrow S$,
which we view as an $M$-torsor for a line bundle $M=\mathrm{Spec}(\mathrm{Sym}^{\cdot}\mathcal{M}^{\vee})\rightarrow S$
on $S$, with corresponding action $\mu:M\times_{S}W\rightarrow W$,
every nonzero group scheme homomorphism $\xi:\mathbb{G}_{a,S}\rightarrow M$
induces a nontrivial $\mathbb{G}_{a,S}$-action $\nu=\mu\circ(\xi\times\mathrm{id}):\mathbb{G}_{a,S}\times_{S}W\rightarrow W$
on $W$. A nonzero group scheme homomorphism $\xi:\mathbb{G}_{a,S}=\mathrm{Spec}(\mathcal{O}_{S}[t])\rightarrow M=\mathrm{Spec}(\mathrm{Sym}^{\cdot}\mathcal{M}^{\vee})$
is uniquely determined by a nonzero $\mathcal{O}_{S}$-module homomorphism
$\mathcal{M}^{\vee}\rightarrow\mathcal{O}_{S}$, equivalently by a
nonzero global section $s\in\Gamma(S,\mathcal{M})$. The following
proposition asserts conversely that every nontrivial $\mathbb{G}_{a,S}$-action
on an $M$-torsor $\theta:W\rightarrow S$ uniquely arises from such
a section. 
\begin{prop}
\label{prop:Torsor-Ga-action}$($\cite[Chapter 3]{DuPhD04}$)$ Let
$\theta:W\rightarrow S$ be a torsor under the action $\mu:M\times_{S}W\rightarrow W$
of a line bundle $M=\mathrm{Spec}(\mathrm{Sym}^{\cdot}\mathcal{M}^{\vee})\rightarrow S$
on $S$ and let $\nu:\mathbb{G}_{a,S}\times_{S}W\rightarrow W$ be
a non trivial $\mathbb{G}_{a,S}$-action on $W$. Then there exists
a non zero global section $s\in\Gamma(S,\mathcal{M})$ such that $\nu=\mu\circ(\xi\times\mathrm{id})$
where $\xi:\mathbb{G}_{a,S}\rightarrow M$ is the group scheme homomorphism
induced by $s$. 
\end{prop}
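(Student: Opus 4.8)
The plan is to work in a local trivialization of $W$ as an $M$-torsor, show that there the $\mathbb{G}_{a,S}$-action is forced to be translation by a section of $\mathcal{M}$ that is \emph{linear} in the group parameter, and then check that these local sections patch into a single global section $s$.

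First I would fix a Zariski open cover $f:Y\rightarrow S$ trivializing $W$, as in the construction preceding Proposition \ref{prop:A1bundle-torsor}, so that over an affine piece $U$ of $Y$ we have $W|_{U}\cong\mathrm{Spec}(\mathcal{O}_{U}[u])$ with the $M$-action $\mu$ given by $u\mapsto u+m$, where $m$ is the fiber coordinate on $M|_{U}$. Since $\nu$ is an action of the $S$-group scheme $\mathbb{G}_{a,S}$, it is an $S$-morphism and hence preserves the fibers of $\theta$; after base change to the coordinate ring $\mathcal{O}_{U}[t]$ of $\mathbb{G}_{a,U}$, the comorphism of $\nu$ is an $\mathcal{O}_{U}[t]$-algebra automorphism of $\mathcal{O}_{U}[t][u]$ (its inverse being the comorphism of the $(-t)$-action). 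Such an automorphism of a polynomial ring in one variable is affine-linear, of the form $u\mapsto a(t)u+b(t)$ with $a(t)\in(\mathcal{O}_{U}[t])^{*}$ and $b(t)\in\mathcal{O}_{U}[t]$.

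The crux is to pin down $a$ and $b$. Because $S$ is normal, hence reduced, one has $(\mathcal{O}_{U}[t])^{*}=\mathcal{O}_{U}^{*}$, so $a$ is in fact independent of $t$; evaluating at $t=0$, where $\nu$ restricts to the identity, then forces $a=1$ and $b(0)=0$. Associativity of the action translates into $b(t_{1}+t_{2})=b(t_{1})+b(t_{2})$, i.e. $b$ is an additive polynomial in $t$; since $k$ has characteristic zero, this forces $b(t)=c_{U}\,t$ for a unique $c_{U}\in\Gamma(U,\mathcal{O}_{U})$. Under the torsor trivialization this $c_{U}$ is precisely a local section of $\mathcal{M}$, and the computation shows that over $U$ the action $\nu$ coincides with $\mu\circ(\xi_{U}\times\mathrm{id})$, where $\xi_{U}:\mathbb{G}_{a,U}\rightarrow M|_{U}$ is the homomorphism determined by $c_{U}$.

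It remains to glue the $c_{U}$ into a global section. On an overlap the two trivializations are related by the cocycle of \eqref{eq:Affine-glueing}, $u\mapsto au+b$, whose linear part $a$ is the $1$-cocycle with values in $\mathcal{O}_{S}^{*}$ defining $\mathcal{M}$. Comparing the two local expressions of the globally defined action $\nu$ across this overlap yields the transformation rule $c_{U'}=a\,c_{U}$, which is exactly the cocycle condition for $\{c_{U}\}$ to define a global section $s\in\Gamma(S,\mathcal{M})$; the additive term $b$ drops out, consistently with $s$ being a section of the line bundle rather than of the affine bundle. By construction $\nu=\mu\circ(\xi\times\mathrm{id})$ for the homomorphism $\xi$ attached to $s$, and $s\neq0$ since otherwise $\xi$, hence $\nu$, would be trivial. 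The main obstacle is the local normal form of the previous paragraph: the two inputs that make it work are the reducedness of $S$ (to remove the $t$-dependence of the linear coefficient) and the characteristic-zero hypothesis (to linearize the additive polynomial $b$); the patching step is then a routine cocycle verification.
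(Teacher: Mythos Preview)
Your proof is correct and follows essentially the same approach as the paper's: trivialize locally, identify the local normal form of the action as $u\mapsto u+c_{U}t$, and then check that the local data $c_{U}$ transform under the overlap cocycle for $\mathcal{M}$ and hence glue to a global section $s$. The only notable difference is that you spell out explicitly \emph{why} the comorphism is forced to be of the form $u\mapsto u+c_{U}t$ (using reducedness to kill the $t$-dependence of the linear coefficient and characteristic zero to linearize the additive polynomial), whereas the paper simply asserts this normal form and moves directly to the gluing computation $\mathrm{p}_{2}^{*}\gamma=a\cdot\mathrm{p}_{1}^{*}\gamma$.
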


\begin{proof}
Let $\mathcal{A}=\theta_{*}\mathcal{O}_{W}$ and let $f:Y\rightarrow S$
be a Zariski open cover such that there exists an $\mathcal{O}_{Y}$-algebra
isomorphism $\varphi:f^{*}\mathcal{A}\rightarrow\mathcal{O}_{Y}[u]$,
and let 
\[
\Phi=\mathrm{p}_{1}^{*}\varphi\circ\mathrm{p}_{2}^{*}\varphi^{-1}:\mathcal{O}_{Y'}[u]\rightarrow\mathcal{O}_{Y'}[u],\quad u\mapsto au+b
\]
be as in (\ref{eq:Affine-glueing}) above. Since $\theta:W\rightarrow S$
is an $M$-torsor, $\varphi$ also determines an $\mathcal{O}_{Y}$-module
isomorphism $\alpha:f^{*}\mathcal{M}^{\vee}\rightarrow\mathcal{O}_{Y}$
such that $\mathrm{p}_{1}^{*}\alpha\circ\mathrm{p}_{2}^{*}\alpha^{-1}:\mathcal{O}_{Y'}\rightarrow\mathcal{O}_{Y'}$
is the multiplication by $a$. The $\mathbb{G}_{a,S}$-action $\nu$
on $W$ pulls back to a $\mathbb{G}_{a,Y}$-action $\nu\times\mathrm{id}$
on $W\times_{\tilde{S}}Y$. The co-mophism $\eta:\mathcal{O}_{Y}[u]\rightarrow\mathcal{O}_{Y}[u]\otimes\mathcal{O}_{Y}[t]$
of the nontrivial $\mathbb{G}_{a,Y}$-action $\varphi\circ(\nu\times\mathrm{id})\circ(\mathrm{id}\times\varphi^{-1})$
on $\mathrm{Spec}(\mathcal{O}_{Y}[u])$ has the form $u\mapsto u\otimes1+1\otimes\gamma t$
for some nonzero $\gamma\in\Gamma(Y,\mathcal{O}_{Y})$. Letting $\mathcal{I}=\gamma\cdot\mathcal{O}_{Y}$
be the ideal sheaf generated by $\gamma$, $\eta$ factors as 
\[
\eta=(\mathrm{id}\otimes j)\circ\tilde{\eta}:\mathcal{O}_{Y}[u]\rightarrow\mathcal{O}_{Y}[u]\otimes\mathrm{Sym}^{\cdot}\mathcal{I}\rightarrow\mathcal{O}_{Y}[u]\otimes\mathcal{O}_{Y}[t]
\]
where $\tilde{\eta}$ is the co-morphism of an action of the line
bundle $\mathrm{Spec}(\mathrm{Sym}^{\cdot}\mathcal{I})\rightarrow Y$
on $\mathbb{A}_{S}^{1}\times_{S}Y\simeq W\times_{S}Y$ and $j:\mathrm{Sym}^{\cdot}\mathcal{I}\rightarrow\mathcal{O}_{Y}[t]$
is the homomorphism induced by the inclusion $\mathcal{I}\subset\mathcal{O}_{Y}$.
Pulling back to $Y'$, we find that $\mathrm{p}_{2}^{*}\gamma=a\cdot\mathrm{p}_{1}^{*}\gamma$,
which implies that $^{t}\alpha(\gamma)\in\Gamma(Y,f^{*}\mathcal{M})$
is the pull-back $f^{*}s$ to $Y$ of a nonzero global section $s\in\Gamma(S,\mathcal{M})$.
Letting $D=\mathrm{div}_{0}(s)$ be the divisors of zeros of $s$,
we have $\mathcal{M}^{\vee}\simeq\mathcal{O}_{S}(-D)\subset\mathcal{O}_{S}$
and $f^{*}\mathcal{M}^{\vee}\simeq\mathcal{O}_{Y}(-f^{*}D)\subset\mathcal{O}_{Y}$
is equal to the ideal $\mathcal{I}=\gamma\cdot\mathcal{O}_{Y}$. The
global section $f^{*}s$ viewed as a homomorphism $f^{*}\mathcal{M}^{\vee}\rightarrow\mathcal{O}_{Y}$
coincides via these isomorphisms with the inclusion $\gamma\cdot\mathcal{O}_{Y}\hookrightarrow\mathcal{O}_{Y}$.
We can thus rewrite $\eta$ in the form 
\[
\eta=(\mathrm{id}\otimes\mathrm{Sym}^{\cdot}f^{*}s)\circ\tilde{\eta}:\mathcal{O}_{Y}[u]\rightarrow\mathcal{O}_{Y}[u]\otimes\mathrm{Sym}^{\cdot}f^{*}\mathcal{M}^{\vee}\rightarrow\mathcal{O}_{Y}[u]\otimes\mathcal{O}_{Y}[t].
\]
By construction $\tilde{\eta}=(\varphi\otimes\mathrm{id})\circ f^{*}\mu^{\sharp}\circ\varphi^{-1}$
where $f^{*}\mu^{\sharp}$ is the pull-back of the co-morphism $\mu^{\sharp}:\mathcal{A}\rightarrow\mathcal{A}\otimes\mathrm{Sym}^{\cdot}\mathcal{M}^{\vee}$
of the action $\mu:M\times_{S}W\rightarrow W$ of $M$ on $W$. It
follows that the pull-back $f^{*}\nu^{\sharp}$ of the co-morphism
of the action $\nu:\mathbb{G}_{a,S}\times W\rightarrow W$ factors
as 
\[
f^{*}\nu^{\sharp}=(\mathrm{id}\otimes\mathrm{Sym}^{\cdot}f^{*}s)\circ f^{*}\mu^{\sharp}=f^{*}\mathcal{A}\rightarrow f^{*}\mathcal{A}\otimes\mathrm{Sym}^{\cdot}f^{*}\mathcal{M}^{\vee}\rightarrow f^{*}\mathcal{A}\otimes\mathcal{O}_{Y}[t]
\]
This in turn implies that $\nu^{\sharp}$ factors as $(\mathrm{id}\otimes\mathrm{Sym}^{\cdot}s)\circ\mu^{\sharp}:\mathcal{A}\rightarrow\mathcal{A}\otimes\mathrm{Sym}^{\cdot}\mathcal{M}^{\vee}\rightarrow\mathcal{A}\otimes\mathcal{O}_{Y}[t]$
as desired. 
\end{proof}
\begin{rem}
In the setting of Proposition \ref{prop:Torsor-Ga-action}, letting
$U\subset S$ be the complement of the zero locus of $s$, the morphism
$\xi$ restricts to an isomorphism of group schemes $\xi|{}_{U}:\mathbb{G}_{a,U}\rightarrow M|_{U}$
for which $W|_{U}$ equipped with the $\mathbb{G}_{a,U}$-action $\nu|_{U}:\mathbb{G}_{a,U}\times_{U}W|_{U}\rightarrow W|_{U}$
is a $\mathbb{G}_{a,U}$-torsor. This isomorphism class in $H^{1}(U,\mathcal{O}_{U})$
of this $\mathbb{G}_{a,U}$-torsor coincides with the image of the
isomorphism class $g\in H^{1}(S,\mathcal{M})$ of $W$ by the composition
of the restriction homomorphism $\mathrm{res}:H^{1}(S,\mathcal{M})\rightarrow H^{1}(U,\mathcal{M}|_{U})$
with the inverse of the isomorphism $H^{1}(U,\mathcal{O}_{U})\rightarrow H^{1}(U,\mathcal{M}|_{U})$
induced by $s|_{U}$. 
\end{rem}

\subsection{$\mathbb{G}_{a}$-equivariant affine modifications of affine-linear
bundles of rank one}

Recall \cite{Du05} that given an integral scheme $X$ with sheaf
of rational functions $\mathcal{K}_{X}$, an effective Cartier divisor
$D$ on $X$ and a closed subscheme $Z\subset X$ whose ideal sheaf
$\mathcal{I}\subset\mathcal{O}_{X}$ contains $\mathcal{O}_{X}(-D)$,
the \emph{affine modification of} $X$ \emph{with center} $(\mathcal{I},D)$
is the affine $X$-scheme $\sigma:X'=\mathrm{Spec}(\mathcal{O}_{X}[\mathcal{I}/D])\rightarrow X$
where $\mathcal{O}_{X}[\mathcal{I}/D]$ denotes the quotient of the
Rees algebra 
\[
\mathcal{O}_{X}[(\mathcal{I}\otimes\mathcal{O}_{X}(D))]=\bigoplus_{n\geq0}(\mathcal{I}\otimes\mathcal{O}_{X}(D))^{n}t^{n}\subset\mathcal{K}_{X}[t]
\]
of the fractional ideal $\mathcal{I}\otimes\mathcal{O}_{X}(D)\subset\mathcal{K}_{X}$
by the ideal generated by $1-t$. In the case where $X=\mathrm{Spec}(A)$
is affine, $D=\mathrm{div}(f)$ is principal and $Z$ is defined by
an ideal $I\subset A$ containing $f$ then $\tilde{X}$ is isomorphic
to the affine modification $X'=\mathrm{Spec}(A[I/f])$ of $X$ with
center $(I,f)$ in the sense of \cite{KaZa99}. 

Now let $S$ be an integral scheme and let $\theta:W\rightarrow S$
be a locally trivial $\mathbb{A}^{1}$-bundle. Let $C\subset S$ be
an integral Cartier divisor, let $D=\theta^{-1}(C)$ be its inverse
image in $W$ and let $Z\subset D$ be a non empty integral closed
subscheme of $D$ on which $\theta$ restricts to an open embedding
$\theta|_{Z}:Z\hookrightarrow C$. Equivalently, $Z$ is the closure
in $D$ of the image $\alpha(U)$ of a rational section $\alpha:C\rightarrow D$
of the locally trivial $\mathbb{A}^{1}$-bundle $\theta|_{D}:D\rightarrow C$
defined over a non empty open subset $U$ of $C$. The complement
$F$ of $\theta|_{Z}(Z)$ in $C$ is a closed subset of $C$ hence
of $S$. Letting $i:S\setminus F\hookrightarrow S$ be the natural
open embedding, we have the following result:
\begin{lem}
\label{lem:A1bundle-affmod} Let $\sigma:W'\rightarrow W$ be the
affine modification of $W$ with center $(\mathcal{I}_{Z},D)$. Then
the composition $\theta\circ\sigma:W'\rightarrow S$ factors through
a locally trivial $\mathbb{A}^{1}$-bundle $\theta':W'\rightarrow S\setminus F$
in such a way that we have a cartesian diagram \[\label{eq:Modif_diagram} \xymatrix{ W' \ar[d]_{\theta '} \ar[r]^{\sigma} & W \ar[d]^{\theta} \\ S\setminus F \ar[r]^i & S. }\] 
\end{lem}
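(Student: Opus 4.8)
The plan is to prove everything by reducing to an explicit computation in local trivializations, since each assertion---the factorization of $\theta\circ\sigma$ through $S\setminus F$, the local triviality of $\theta'$, and the cartesianness of the square---may be checked on a Zariski open cover of $S$. First I would cover $S$ by affine opens $V=\mathrm{Spec}(R)$ small enough that $\theta^{-1}(V)\to V$ is the trivial bundle $\mathrm{Spec}(R[u])\to\mathrm{Spec}(R)$ and that $C$ is cut out by a single $f\in R$, so that $D=\theta^{-1}(C)$ is the principal divisor $\mathrm{div}(f)$ on $\mathrm{Spec}(R[u])$ and $\mathcal{O}_W(-D)=fR[u]$. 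On such a $V$, the subscheme $Z$, being the closure of the graph of the rational section $\alpha$, is defined (after possibly shrinking $V$ and choosing $a,b$ without common zeros on $C\cap V$) by an ideal of the form $\mathcal{I}_Z=(f,\,au-b)$ with $a,b\in R$; the locus $\theta(Z)=C\setminus F$ where $\alpha$ is regular corresponds to $a\neq0$ on $C$, while along $F$ one has $a\equiv0$ and $b\not\equiv0$ on $C$. This last fact is exactly the translation of the hypothesis that $\theta\mid_Z$ is an open embedding with image $C\setminus F$: since $Z$ meets no fibre of $D\to C$ over a point $p\in F$, the polynomial $a(p)u-b(p)$ must have no root, forcing $a(p)=0$ and $b(p)\neq0$.

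With this normal form the affine modification is immediate to compute: as $f\in\mathcal{I}_Z$, one has $R[u][\mathcal{I}_Z/f]=R[u]\bigl[(au-b)/f\bigr]$, and setting $w=(au-b)/f$ the defining relation is $fw=au-b$. Over the locus where $a$ is invertible (in particular over any $V$ whose intersection with $C$ lies in $C\setminus F$), this eliminates $u$ and gives $R[u][w]=R[w]$, so the restriction of $W'$ over $V$ is $\mathrm{Spec}(R[w])\to V$, the trivial $\mathbb{A}^1$-bundle; over $V\subset S\setminus C$ the element $f$ is a unit and $\sigma$ is already an isomorphism. Gluing these local trivializations over a cover of $S\setminus F$ produces the locally trivial $\mathbb{A}^1$-bundle $\theta':W'\to S\setminus F$. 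The factorization of $\theta\circ\sigma$ through $S\setminus F$ then reduces to checking that $W'$ has no points over $F$: over $p\in F$ the relation $fw=au-b$ specializes, using $f(p)=a(p)=0$ and $b(p)\neq0$, to $0=-b(p)\neq0$, so the fibre ring is the zero ring and the fibre is empty. This is the computation I regard as the heart of the statement, as it is what makes the base shrink precisely from $S$ to $S\setminus F$.

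It then remains to verify that the square is cartesian, i.e. that the canonical morphism $W'\to W\times_S(S\setminus F)$ determined by $\sigma$ and $\theta'$ is an isomorphism, which I would again test on the cover. Over $S\setminus C$ this is clear since $\sigma$ restricts to an isomorphism there, and that is the essential geometric input. The delicate point---which I expect to be the main obstacle---is the analysis of $\sigma$ along $C$, where it is a genuine modification rather than an isomorphism: one must track carefully how the modified fibres recorded by the coordinate $w$ sit over those recorded by $u$, and confirm that on overlaps the local expressions $w=(au-b)/f$ glue, the ambiguity being governed by the affine-linear transition functions of $\theta$ as in the torsor description of Proposition~\ref{prop:A1bundle-torsor}. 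Once these local identifications are checked to assemble into the asserted isomorphism, the cartesian diagram follows.
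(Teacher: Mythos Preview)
Your approach to the factorization $\theta':W'\to S\setminus F$ and its local triviality is correct and essentially identical to the paper's: both reduce to an affine chart where $W=\mathrm{Spec}(A[x])$ and $C=\mathrm{div}(f)$, write the center as $V(f,a_0+a_1x)$ (your $au-b$, up to sign and relabelling), present $W'$ as $\mathrm{Spec}(A[x,v]/(a_0+a_1x-fv))$, and cover it by the two principal opens $\{a_1\neq 0\}$ and $\{f\neq 0\}$ on which $\theta'$ is the trivial $\mathbb{A}^1$-bundle over $S_{a_1}$ and $S_f$. Your check that the fibre over $F$ is empty is the same computation.

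Where your proposal stalls---your ``delicate point''---is exactly where the assertion in the lemma is actually \emph{false}: the square is only commutative, not cartesian. Over a point $c\in C\setminus F$ one has $f(c)=0$ and $a_1(c)\neq 0$, so the fibre $W'_c$ is the affine line in the coordinate $v$, but $\sigma$ collapses it to the single point $x=-a_0(c)/a_1(c)$ of $W_c$; in your notation, on $S_a$ the ring map $\sigma^*:R_a[u]\to R_a[w]$ sends $u\mapsto (fw+b)/a$, and this is not an isomorphism because $f$ is not a unit in $R_a$. Thus the canonical morphism $W'\to W\times_S(S\setminus F)$ fails to be an isomorphism along $C\setminus F$. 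The paper's own proof never attempts to verify cartesianness---it establishes only the factorization and the local triviality---and the subsequent uses of the lemma (Lemma~\ref{lem:Torsor-affmod} and the proof of Theorem~\ref{thm:Affine-Ga-extensions-A2}) invoke only those properties. So your instinct that this step is the obstacle was correct; the resolution is simply that ``cartesian'' in the statement should read ``commutative''.
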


\begin{proof}
The question being local with respect to a Zariski open cover of $S$
over which $\theta:W\rightarrow S$ becomes trivial, we can assume
without loss of generality that $S=\mathrm{Spec}(A)$, $W=\mathrm{Spec}(A[x])$,
$C=\mathrm{div}(f)$ for some non zero element $f\in A$. The integral
closed subscheme $Z\subset D$ is then defined by an ideal $I$ of
the form $(f,g)$ where $g(x)\in A[x]$ is an element whose image
in $(A/f)[x]$ is a polynomial of degree one in $t$. So $g(x)=a_{0}+a_{1}x+x^{2}fR(x)$
where $a_{0}\in A$, $a_{1}\in A$ has non zero residue class in $A/f$
and $R(x)\in A[x]$. The condition that $\theta|_{Z}:Z\rightarrow C$
is an open embedding implies further that the residue classes $\overline{a}_{0}$
and $\overline{a}_{1}$ of $a_{0}$ and $a_{1}$ in $A/f$ generate
the unit ideal. The complement $F$ of the image of $\theta|_{Z}(Z)$
in $C$ is then equal to the closed subscheme of $C$ with defining
ideal $(\overline{a}_{1})\subset A/f$, hence to the closed subscheme
of $S$ with defining ideal $(f,a_{1})\subset A$. The algebra $A[t][I/f]$
is isomorphic to 
\[
A[x][u]/(g-fu)=A[x][u-x^{2}R(x)]/(a_{0}+a_{1}x-f(u-t^{2}R(x))\simeq A[x][v]/(a_{0}+a_{1}x-fv).
\]
One deduces from this presentation that the morphism $\theta\circ\sigma:W'=\mathrm{Spec}(A[I/f])\rightarrow\mathrm{Spec}(A)$
corresponding to the inclusion $A\rightarrow A[I/f]$ factors through
a locally trivial $\mathbb{A}^{1}$-bundle $\theta':W'\rightarrow S\setminus F$
over the complement of $F$. Namely, since $\overline{a}_{0}$ and
$\overline{a}_{1}$ generate the unit ideal in $A/f$, it follows
that $a_{1}$ and $f$ generate the unit ideal in $A[x][u]/(g-fu)$.
So $W'$ is covered by the two principal affine open subsets 
\begin{align*}
W'_{a_{1}} & \simeq\mathrm{Spec}(A_{a_{1}}[x][v]/(a_{0}+a_{1}x-fv))\simeq\mathrm{Spec}(A_{a_{1}}[v])\simeq S_{a_{1}}\times\mathbb{A}^{1}\\
W'_{f} & \simeq\mathrm{Spec}(A_{f}[x][v]/(a_{0}+a_{1}x-fv))\simeq\mathrm{Spec}(A_{f}[x])\simeq S_{f}\times\mathbb{A}^{1}
\end{align*}
on which $\theta'$ restricts to the projection onto the first factor. 
\end{proof}
With the notation above, $\theta:W\rightarrow S$ and $\theta':W'\rightarrow S\setminus F$
are torsors under the action of line bundles $M=\mathrm{Spec}(\mathrm{Sym}^{\cdot}\mathcal{M}^{\vee})$
and $M'=\mathrm{Spec}(\mathrm{Sym}^{\cdot}{\mathcal{M}'}^{\vee})$
for certain uniquely determined invertible sheaves $\mathcal{M}$
and $\mathcal{M}'$ on $S$ and $S\setminus F$ respectively. 
\begin{lem}
\label{lem:Torsor-affmod} $($\cite[\S 4.3]{DuPhD04}$)$ Let $\sigma:W'\rightarrow W$
be the affine modification of $W$ with center $(\mathcal{I}_{Z},D)$
as is Lemma \ref{lem:A1bundle-affmod}. Then $\mathcal{M}'=\mathcal{M}\otimes_{\mathcal{O}_{S}}\mathcal{O}_{S}(-C)|_{S\setminus F}$
and the cartesian diagram of Lemma \ref{lem:A1bundle-affmod} is equivariant
for the group scheme homomorphism $\xi:M'\rightarrow M$ induced by
the homomorphism $\mathcal{M}\otimes_{\mathcal{O}_{S}}\mathcal{O}_{S}(-C)\rightarrow\mathcal{M}$
obtained by tensoring the inclusion $\mathcal{O}_{S}(-C)\hookrightarrow\mathcal{O}_{S}$
by $\mathcal{M}$. 
\end{lem}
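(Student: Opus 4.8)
The plan is to bypass any matching of transition cocycles and instead exhibit the torsor structure on $W'$ directly, using the local model from the proof of Lemma \ref{lem:A1bundle-affmod}. Since both $\mathcal{M}$ and $\mathcal{M}'$ are uniquely determined by Proposition \ref{prop:A1bundle-torsor} and the assertion is local on $S$, I would reduce to the situation $S=\mathrm{Spec}(A)$, $W=\mathrm{Spec}(A[x])$, $C=\mathrm{div}(f)$ and $W'=\mathrm{Spec}(A[x,v]/(a_0+a_1x-fv))$ treated there. In this model $\mathcal{M}=\mathcal{O}_S$ and the line bundle $M=\mathrm{Spec}(A[t])$ acts on $W$ by the translation $x\mapsto x+t$, so it suffices to produce an analogous explicit description of an $M'$-action on $W'$ under which $\sigma$ becomes equivariant.

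The key step is to lift the $M$-action on $W$ to an action of $M'=M\otimes_{\mathcal{O}_S}\mathcal{O}_S(-C)$ on $W'$. Let $\xi:M'\to M$ be the homomorphism induced by tensoring the inclusion $\mathcal{O}_S(-C)\hookrightarrow\mathcal{O}_S$ with $\mathcal{M}$; in the local model its comorphism sends the coordinate $t$ on $M$ to $ft'$, where $t'$ is the coordinate on $M'=\mathrm{Spec}(A[t'])$. The pullback along $\xi$ of the $M$-translation is then $x\mapsto x+ft'$, and I would check that this translation preserves the center $\mathcal{I}_Z=(f,a_0+a_1x)$ of the affine modification: indeed $a_0+a_1x\mapsto a_0+a_1x+a_1ft'\equiv a_0+a_1x\pmod{f}$, so $\mathcal{I}_Z$ is invariant and the translation extends through $\sigma$ to $W'$ by the explicit formulas $x\mapsto x+ft'$ and $v\mapsto v+a_1t'$.

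It then remains to verify that this $M'$-action exhibits $\theta':W'\to S\setminus F$ as a torsor and that $\sigma$ is $\xi$-equivariant. Equivariance is immediate from the formulas, while simple transitivity follows chart by chart from the trivializations of Lemma \ref{lem:A1bundle-affmod}: over $S_f$ the coordinate $x$ is translated by $ft'$ with $f$ a unit, and over $S_{a_1}$ the coordinate $v$ is translated by $a_1t'$ with $a_1$ a unit. Finally, the identification $t=ft'$ coming from $\xi$ shows that $t'=f^{-1}t$ generates $\mathcal{M}'^{\vee}$, so that $\mathcal{M}'^{\vee}=\mathcal{M}^{\vee}\otimes_{\mathcal{O}_S}\mathcal{O}_S(C)$ via the canonical local generator $f^{-1}$ of $\mathcal{O}_S(C)$; dualizing yields $\mathcal{M}'=\mathcal{M}\otimes_{\mathcal{O}_S}\mathcal{O}_S(-C)|_{S\setminus F}$, and $\xi$ is by construction the asserted map. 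By the uniqueness in Proposition \ref{prop:A1bundle-torsor}, this is the torsor structure carried by $W'$.

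I expect the main obstacle to be globalization: one must check that the lifted $M'$-action, though written down in a chart, is independent of the chosen trivialization of $W$ and of the local equation $f$ of $C$, so that the intrinsic homomorphism $\mathcal{M}\otimes_{\mathcal{O}_S}\mathcal{O}_S(-C)\to\mathcal{M}$ indeed governs the modification over all of $S\setminus F$. This is precisely where the invariant description of $\xi$, as the pullback of those translations vanishing along $C$, is essential: the chart-dependent transition factor $f/a_1$ that one computes naively from the two coordinates $x$ and $v$ hides a twist by $a_1$, which is absorbed into the local trivialization of $\mathcal{O}_S(C)$ and so disappears from the coordinate-free statement.
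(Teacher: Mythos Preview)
Your proposal is correct and follows essentially the same approach as the paper: both reduce to the affine chart $S=\mathrm{Spec}(A)$, $W=\mathrm{Spec}(A[x])$, write the lifted action explicitly as $x\mapsto x+ft'$, $v\mapsto v+a_1t'$ (the paper uses the coordinate $f^{-1}t$ in place of your $t'$), and verify torsor triviality on the charts $W'_{a_1}$ and $W'_f$. The only cosmetic difference is that the paper writes down the lifted comorphism directly rather than first checking invariance of $\mathcal{I}_Z$, and handles your globalization concern in one line at the outset by invoking the uniqueness of $M$ and $M'$ from Proposition~\ref{prop:A1bundle-torsor}.
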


\begin{proof}
Since $M$ and $M'$ are uniquely determined, the question is again
local with respect to a Zariski open cover of $S$ over which $\theta:W\rightarrow S$,
hence $M$, becomes trivial. We can thus assume as in the proof of
Lemma \ref{lem:A1bundle-affmod} that $S=\mathrm{Spec}(A)$, $W=\mathrm{Spec}(A[x])$,
that $C=\mathrm{div}(f)$ for some non zero element $f\in A$ and
that $Z\subset D$ is defined by the ideal $(f,g)$ for some $g=a_{0}+a_{1}x+fx^{2}R(x)\in A[x]$.
Furthermore, the action of $M\simeq\mathbb{G}_{a,S}=\mathrm{Spec}(A[t])$
on $W\simeq S\times\mathbb{A}^{1}$ is the one by translations $x\mapsto x+t$
on the second factor. Let $N=\mathrm{Spec}(\mathrm{Sym}^{\cdot}\mathcal{O}_{S}(C))\simeq\mathrm{Spec}(\mathrm{Sym}^{\cdot}f^{-1}A)$
where $f^{-1}A$ denotes the free sub-$A$-module of the field of
fractions $\mathrm{Frac}(A)$ of $A$ generated by $f^{-1}$. As in
the proof of Proposition \ref{prop:Torsor-Ga-action}, the inclusion
$\mathcal{O}_{S}(-C)=f\cdot\mathcal{O}_{S}\hookrightarrow\mathcal{O}_{S}$
induces a group-scheme homomorphism $\xi:N\rightarrow M$ whose co-morphism
$\xi^{\sharp}$ concides with the inclusion $A[t]\subset\mathrm{Sym}^{\cdot}f^{-1}A=A[(f^{-1}t)]$.
The co-morphism of the corresponding action of $N$ on $W$ is given
by 
\[
A[x]\rightarrow A[x]\otimes A[f^{-1}t],\;x\mapsto x\otimes1+1\otimes t=x\otimes1+f\otimes f^{-1}t.
\]
This action lifts on $W'\simeq\mathrm{Spec}(A[x][v]/(a_{0}+a_{1}x-fv))$
to an action $\nu:N\times_{S}W'\rightarrow W'$ whose co-morphism
\[
A[x][v]/(a_{0}+a_{1}x-fv)\rightarrow A[x][v]/(a_{0}+a_{1}x-fv)\otimes A[f^{-1}t]
\]
is given by $x\mapsto x\otimes1+1\otimes t$ and $v\mapsto v\otimes1+a_{1}\otimes f^{-1}t$.
By construction, the principal open subsets $W'_{a_{1}}\simeq\mathrm{Spec}(A_{a_{1}}[v])\simeq\mathrm{Spec}(A_{a_{1}}[v/a_{1}])$
and $W'_{f}\simeq\mathrm{Spec}(A_{f}[x])\simeq\mathrm{Spec}(A_{f}[x/f])$
of $W'$ equipped with the induced actions of $N|_{S_{a_{1}}}$ and
$N|_{S_{f}}$ respectively are equivariantly isomorphic to $N|_{S_{a_{1}}}$
and $N|_{S_{f}}$ acting on themselves by translations. So $\theta':W'\rightarrow S\setminus F$
is an $N|_{S\setminus F}$-torsor, showing that $\mathcal{M}'=\mathcal{M}\otimes_{\mathcal{O}_{S}}\mathcal{O}_{S}(-C)|_{S\setminus F}$
as desired. 
\end{proof}

\section{Extensions of $\mathbb{G}_{a}$-torsors of Type I: locally trivial
bundles over the blow-up of a point}

Given a surface $S$ and a locally trivial $\mathbb{A}^{1}$-bundle
$\theta:W\rightarrow\tilde{S}$ over the blow-up $\tau:\tilde{S}\rightarrow S$
of a closed point $o$ in the smooth locus of $S$, the restriction
of $W$ over the complement $\tilde{S}\setminus E$ of the exceptional
divisor $E$ of $\tau$ is a locally trivial $\mathbb{A}^{1}$-bundle
$\tau\circ\theta:W\mid_{\tilde{S}\setminus E}\rightarrow\tilde{S}\setminus E\stackrel{\simeq}{\rightarrow}S\setminus\{o\}$.
This observation combined with the following re-interpretation of
an example constructed in \cite{He15} suggests that locally trivial
$\mathbb{A}^{1}$-bundles over the blow-up of closed point $o$ in
the smooth locus of a surface $S$ form a natural class of schemes
in which to search for nontrivial $\mathbb{G}_{a}$-extension of $\mathbb{G}_{a}$-bundles
over punctured surfaces. 
\begin{example}
\label{subsec:Homogeneous-Ga-torsors} Let $o=V(x,y)$ be a global
scheme-theoretic complete intersection closed point in the smooth
locus of a surface $S$. Let $\rho:P\rightarrow S\setminus\{o\}$
and $\pi_{0}:X_{0}\rightarrow S$ be the affine $S$-schemes with
defining sheaves of ideals $(xv-yu-1)$ and $(xr-yq,yp-x(q-1),pr-q(q-1))$
in $\mathcal{O}_{S}[u,v]$ and $\mathcal{O}_{S}[p,q,r]$ respectively.
The morphism of $S$-schemes $j_{0}:P\rightarrow X_{0}$ defined by
$(x,y,u,v)\mapsto(x,y,xu,xv,yv)$ is an open embedding, equivariant
for the $\mathbb{G}_{a,S}$-actions on $P$ and $X_{0}$ associated
with the locally nilpotent $\mathcal{O}_{S}$-derivations $x\partial_{u}+y\partial_{v}$
and $x^{2}\partial_{p}+xy\partial_{q}+y^{2}\partial_{r}$ of $\rho_{*}\mathcal{O}_{P}$
and $(\pi_{0})_{*}\mathcal{O}_{X_{0}}$ respectively. It is straightforward
to check that $\rho:P\rightarrow S\setminus\{o\}$ is a $\mathbb{G}_{a,S_{*}}$-torsor
and that $\pi_{0}:X_{0}\rightarrow S$ is a $\mathbb{G}_{a}$-extension
of $P$ whose fiber over $o$ is isomorphic to the smooth affine quadric
$\{pr-q(q-1)=0\}\subset\mathbb{A}_{\kappa}^{3}$. Viewing the blow-up
$\tilde{S}$ of $o$ as the closed subscheme of $S\times_{k}\mathrm{Proj}(k[u_{0},u_{1}])$
with equation $xu_{1}-yu_{0}=0$, the morphism of $S$-schemes $\theta:X_{0}\rightarrow\tilde{S}$
defined by 
\[
(x,y,p,q,r)\mapsto((x,y),[x:y])=((x,y),[q:r])=((x,y),[p:q-1])
\]
is a locally trivial $\mathbb{A}^{1}$-bundle, actually a torsor under
the line bundle corresponding to the invertible sheaf $\mathcal{O}_{\tilde{S}}(-2E)$,
where $E\simeq\mathbb{P}_{\kappa}^{1}$ denotes the exceptional divisor
of the blow-up. 
\end{example}

\begin{notation}
Given a surface $S$ and a closed point $o$ in the smooth locus of
$S$, with residue field $\kappa$, we denote by $\tau:\tilde{S}\rightarrow S$
be the blow-up of $o$, with exceptional divisor $E\simeq\mathbb{P}_{\kappa}^{1}$.
We identify $\tilde{S}\setminus E$ and $S_{*}=S\setminus\{o\}$ by
the isomorphism induced by $\tau$. For every $\ell\in\mathbb{Z}$,
we denote by $M(\ell)=\mathrm{Spec}(\mathrm{Sym}^{\cdot}\mathcal{O}_{\tilde{S}}(-\ell E))$
the line bundle on $\tilde{S}$ corresponding to the invertible sheaf
$\mathcal{O}_{\tilde{S}}(\ell E)$. 
\end{notation}

The aim of this section is to give a classification of all possible
$\mathbb{G}_{a}$-equivariant extensions of Type I of a given $\mathbb{G}_{a}$-torsor
$\rho:P\rightarrow S_{*}$, that is $\mathbb{G}_{a}$-extensions $\pi:W\rightarrow S$
that factor through locally trivial $\mathbb{A}^{1}$-bundles $\theta:W\rightarrow\tilde{S}$. 

\subsection{Existence of $\mathbb{G}_{a}$-extensions of Type I}

By virtue of Propositions \ref{prop:A1bundle-torsor} and \ref{prop:Torsor-Ga-action},
there exists a one-to-one correspondence between $\mathbb{G}_{a}$-equivariant
extensions of a $\mathbb{G}_{a}$-torsor $\rho:P\rightarrow S_{*}$
that factor through a locally trivial $\mathbb{A}^{1}$-bundle $\theta:W\rightarrow\tilde{S}$
and pairs $(M,\xi)$ consisting of an $M$-torsor $\theta:W\rightarrow\tilde{S}$
for some line bundle $M$ on $\tilde{S}$ and a group scheme homomorphism
$\xi:\mathbb{G}_{a,\tilde{S}}\rightarrow M$ restricting to an isomorphism
over $\tilde{S}\setminus E$, such that $W$ equipped with the $\mathbb{G}_{a,\tilde{S}}$-action
deduced by composition with $\xi$ restricts on $S_{*}=\tilde{S}\setminus E$
to a $\mathbb{G}_{a,S_{*}}$-torsor $\theta\mid_{S_{*}}:W\mid_{S_{*}}\rightarrow S_{*}$
isomorphic to $\rho:P\rightarrow S_{*}$. The condition that $\xi:\mathbb{G}_{a,\tilde{S}}\rightarrow M$
restricts to an isomorphism outside $E$ implies that $M\simeq M(\ell)$
for some $\ell$, which is necessarily non negative, and that $\xi$
is induced by the canonical global section of $\mathcal{O}_{\tilde{S}}(\ell E)$
with divisor $\ell E$. 
\begin{prop}
\label{prop:Blowup-Extension} Let $\rho:P\rightarrow S_{*}$ be a
$\mathbb{G}_{a,S_{*}}$-torsor. Then there exists an integer $\ell_{0}\geq0$
depending on $P$ only such that for every $\ell\geq\ell_{0}$, $P$
admits a $\mathbb{G}_{a}$-extension to a uniquely determined $M(\ell)$-torsor
$\theta_{\ell}:W(P,\ell)\rightarrow\tilde{S}$ equipped with the $\mathbb{G}_{a,\tilde{S}}$-action
induced by the canonical global section $s_{\ell}\in\Gamma(\tilde{S},\mathcal{O}_{\tilde{S}}(\ell E))$
with divisor $\ell E.$ 
\end{prop}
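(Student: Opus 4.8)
The plan is to translate the statement, via the one-to-one correspondence recalled just above together with Proposition \ref{prop:A1bundle-torsor}, into a question about a single restriction map in cohomology, and then to analyze that map through the filtration of $\mathcal{O}_{S_{*}}$ by pole order along $E$. By the classification preceding Proposition \ref{prop:A1bundle-torsor}, isomorphism classes of $M(\ell)$-torsors over $\tilde{S}$ are in bijection with $H^{1}(\tilde{S},\mathcal{O}_{\tilde{S}}(\ell E))$. Since $\mathcal{O}_{\tilde{S}}(\ell E)|_{S_{*}}\simeq\mathcal{O}_{S_{*}}$ and $s_{\ell}$ is nowhere vanishing on $S_{*}=\tilde{S}\setminus E$, restriction to $S_{*}$ yields a homomorphism $r_{\ell}:H^{1}(\tilde{S},\mathcal{O}_{\tilde{S}}(\ell E))\to H^{1}(S_{*},\mathcal{O}_{S_{*}})$ sending the class of an $M(\ell)$-torsor to the class of the $\mathbb{G}_{a,S_{*}}$-torsor it induces over $S_{*}$, as made precise by the Remark following Proposition \ref{prop:Torsor-Ga-action}. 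Writing $g_{P}\in H^{1}(S_{*},\mathcal{O}_{S_{*}})$ for the class of $\rho:P\rightarrow S_{*}$, the $M(\ell)$-torsors extending $P$ are exactly the elements of $r_{\ell}^{-1}(g_{P})$; as $r_{\ell}$ is a homomorphism of abelian groups, existence of the desired extension amounts to $g_{P}\in\mathrm{im}(r_{\ell})$ and its uniqueness to $\ker r_{\ell}=0$.

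To understand $r_{\ell}$ I would use that the open immersion $j:S_{*}\hookrightarrow\tilde{S}$ is affine, being locally the complement of the principal Cartier divisor $E$, so that $R^{q}j_{*}=0$ for $q>0$ and $j_{*}\mathcal{O}_{S_{*}}=\varinjlim_{m}\mathcal{O}_{\tilde{S}}(mE)$. Since $\tilde{S}$ is noetherian, cohomology commutes with this filtered colimit, giving a canonical identification
\[ H^{1}(S_{*},\mathcal{O}_{S_{*}})\;\simeq\;\varinjlim_{m}H^{1}(\tilde{S},\mathcal{O}_{\tilde{S}}(mE)) \]
under which $r_{\ell}$ is precisely the structural map from the $\ell$-th term into the colimit. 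The whole question then reduces to controlling the transition maps $H^{1}(\tilde{S},\mathcal{O}_{\tilde{S}}(mE))\to H^{1}(\tilde{S},\mathcal{O}_{\tilde{S}}((m+1)E))$ induced by the inclusions $\mathcal{O}_{\tilde{S}}(mE)\hookrightarrow\mathcal{O}_{\tilde{S}}((m+1)E)$.

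Here the geometry of the exceptional curve enters. For each $m$ the short exact sequence
\[ 0\to\mathcal{O}_{\tilde{S}}(mE)\to\mathcal{O}_{\tilde{S}}((m+1)E)\to\mathcal{O}_{E}((m+1)E)\to0 \]
together with $\mathcal{O}_{E}(E)\simeq\mathcal{O}_{\mathbb{P}^{1}_{\kappa}}(-1)$ (since $E^{2}=-1$) identifies the kernel of the $m$-th transition map with a quotient of $H^{0}(E,\mathcal{O}_{E}((m+1)E))=H^{0}(\mathbb{P}^{1}_{\kappa},\mathcal{O}(-(m+1)))$. For every $m\geq0$ this group vanishes, so all transition maps are injective; consequently $r_{\ell}$ is injective for every $\ell\geq0$, which already yields the asserted uniqueness. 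For existence it then suffices to note that $g_{P}$, being an element of the colimit $\varinjlim_{m}H^{1}(\tilde{S},\mathcal{O}_{\tilde{S}}(mE))$, lies in the image of some finite stage; letting $\ell_{0}\geq0$ be the least such index — a number manifestly depending only on the class $g_{P}$, hence only on $P$ — injectivity of the transition maps guarantees $g_{P}\in\mathrm{im}(r_{\ell})$ for all $\ell\geq\ell_{0}$. For such $\ell$ one defines $\theta_{\ell}:W(P,\ell)\rightarrow\tilde{S}$ to be the $M(\ell)$-torsor corresponding to the unique class in $r_{\ell}^{-1}(g_{P})$, equipped with the $\mathbb{G}_{a,\tilde{S}}$-action deduced from the homomorphism $\xi$ induced by $s_{\ell}$.

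The step I expect to require the most care is the cohomological identification $H^{1}(S_{*},\mathcal{O}_{S_{*}})\simeq\varinjlim_{m}H^{1}(\tilde{S},\mathcal{O}_{\tilde{S}}(mE))$ and the verification that $r_{\ell}$ is exactly the map into the colimit, since this is where the generality of $S$ (only essentially of finite type, with possibly non-algebraic residue field $\kappa$ and possibly $H^{1}(S,\mathcal{O}_{S})\neq0$ when $S$ is not affine) must be accommodated; the affineness of $j$ and the noetherianity of $\tilde{S}$ are precisely what make the argument valid at this level of generality. Once this identification is in place, the vanishing $H^{0}(\mathbb{P}^{1}_{\kappa},\mathcal{O}(-(m+1)))=0$ does all the remaining work, simultaneously delivering uniqueness for all $\ell\geq0$ and reducing existence to the elementary fact that an element of a filtered colimit already appears at a finite stage.
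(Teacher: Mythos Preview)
Your proof is correct and follows essentially the same route as the paper's own argument: both identify $H^{1}(S_{*},\mathcal{O}_{S_{*}})$ with $\varinjlim_{m}H^{1}(\tilde{S},\mathcal{O}_{\tilde{S}}(mE))$ via the affineness of the inclusion and noetherianity, then use the short exact sequence restricting to $E$ together with $H^{0}(\mathbb{P}^{1}_{\kappa},\mathcal{O}(-m-1))=0$ to obtain injectivity of the transition maps, from which existence and uniqueness follow. The paper makes the references for the colimit identification explicit (Kempf and EGA III) and treats the trivial case $g_{P}=0$ separately, but the substance is the same.
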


\begin{proof}
The invertible sheaves $\mathcal{O}_{\tilde{S}}(nE)$, $n\geq0$,
form an inductive system of sub-$\mathcal{O}_{\tilde{S}}$-modules
of the sheaf $\mathcal{K}_{\tilde{S}}$ of rational function on $\tilde{S}$,
where for each $n$, the injective transition homomorphism $j_{n,n+1}:\mathcal{O}_{\tilde{S}}(nE)\hookrightarrow\mathcal{O}_{\tilde{S}}((n+1)E)$
is obtained by tensoring the canonical section $\mathcal{O}_{\tilde{S}}\rightarrow\mathcal{O}_{\tilde{S}}(E)$
with divisor $E$ with $\mathcal{O}_{\tilde{S}}(nE)$. Let $i:S_{*}=\tilde{S}\setminus E\hookrightarrow\tilde{S}$
be the open inclusion. Since $E$ is a Cartier divisor, it follows
from \cite[Théorème 9.3.1]{EGA1} that $i_{*}\mathcal{O}_{S_{*}}\simeq\mathrm{colim}_{n\geq0}\mathcal{O}_{\tilde{S}}(nE)$.
Furthermore, since $E\simeq\mathbb{P}_{\kappa}^{1}$ is the exceptional
divisor of $\tau:\tilde{S}\rightarrow S$, we have $\mathcal{O}_{\tilde{S}}(E)|_{E}\simeq\mathcal{O}_{\mathbb{P}_{\kappa}^{1}}(-1)$,
and the long exact sequence of cohomology for the short exact sequence
\begin{equation}
0\rightarrow\mathcal{O}_{\tilde{S}}(nE)\rightarrow\mathcal{O}_{\tilde{S}}((n+1)E)\rightarrow\mathcal{O}_{\tilde{S}}((n+1)E)|_{E}\rightarrow0,\quad n\geq0,\label{eq:fond-sequence}
\end{equation}
combined with the vanishing of $H^{0}(\mathbb{P_{\kappa}}^{1},\mathcal{O}_{\mathbb{P}_{\kappa}^{1}}(-n-1))$
for every $n\geq0$ implies that the transition homomorphisms 
\[
H^{1}(j_{n,n+1}):H^{1}(\tilde{S},\mathcal{O}_{\tilde{S}}(nE))\rightarrow H^{1}(\tilde{S},\mathcal{O}_{\tilde{S}}((n+1)E)),\;n\geq0,
\]
are all injective. By assumption, $S$ whence $\tilde{S}$ is noetherian,
and $i:S_{*}\rightarrow\tilde{S}$ is an affine morphism as $E$ is
a Cartier divisor on $\tilde{S}$. We thus deduce from \cite[Theorem 8]{Ke80}
and \cite[Corollaire 1.3.3]{EGA3} that the canonical homomorphism
\begin{equation}
\psi:\mathrm{colim}_{n\geq0}H^{1}(\tilde{S},\mathcal{O}_{\tilde{S}}(nE))\rightarrow H^{1}(S_{*},\mathcal{O}_{S_{*}})\label{eq:cano-iso}
\end{equation}
obtained as the composition of the canonical homomorphisms 
\[
\mathrm{colim}_{n\geq0}H^{1}(\tilde{S},\mathcal{O}_{\tilde{S}}(nE))\rightarrow H^{1}(\tilde{S},\mathrm{colim}_{n\geq0}\mathcal{O}_{\tilde{S}}(nE))=H^{1}(\tilde{S},i_{*}\mathcal{O}_{S_{*}})
\]
and $H^{1}(\tilde{S},i_{*}\mathcal{O}_{S_{*}})\rightarrow H^{1}(S_{*},\mathcal{O}_{S_{*}})$
is an isomorphism. 

Let $g\in H^{1}(S_{*},\mathcal{O}_{S_{*}})$ be the isomorphism class
of the $\mathbb{G}_{a,S_{*}}$-torsor $\rho:P\rightarrow S_{*}$.
If $g=0$, then since $\psi$ is an isomorphism, we have $\psi^{-1}(g)=0$
and, since the homomorphisms $H^{1}(j_{n,n+1})$ are injective, it
follows that $\psi^{-1}(g)$ is represented by the zero sequence $(0)_{n}\in H^{1}(\tilde{S},\mathcal{O}_{\tilde{S}}(nE))$,
$n\geq0$ . Consequently, the only $\mathbb{G}_{a}$-extensions of
$P$ are the line bundles $W(P,\ell)=M(\ell)$, $\ell\geq0$, each
equipped with the $\mathbb{G}_{a,\tilde{S}}$-action induced by its
canonical global section $s_{\ell}\in\Gamma(\tilde{S},\mathcal{O}_{\tilde{S}}(\ell E))$. 

Otherwise, if $g\neq0$, then $h=\psi^{-1}(g)\neq0$, and since the
homomorphisms $H^{1}(j_{n,n+1})$, $n\geq0$ are injective, it follows
that there exists a unique minimal integer $\ell_{0}$ such that $h$
is represented by the sequence 
\begin{equation}
h_{n}=H^{1}(j_{n-1,n})\circ\cdots\circ H^{1}(j_{\ell_{0},\ell_{0}+1})(h_{\ell_{0}})\in H^{1}(\tilde{S},\mathcal{O}_{\tilde{S}}(nE)),\;n\geq\ell_{0}\label{eq:colim-sequence}
\end{equation}
for some non zero $h_{\ell_{0}}\in H^{1}(\tilde{S},\mathcal{O}_{\tilde{S}}(\ell_{0}E))$.
It then follows from Proposition \ref{prop:Torsor-Ga-action} that
for every $\ell\geq\ell_{0}$, the $M(\ell)$-torsor $\theta_{\ell}:W(P,\ell)\rightarrow\tilde{S}$
with isomorphism class $h_{\ell}$ equipped with the $\mathbb{G}_{a,\tilde{S}}$-action
induced by the canonical global section $s_{\ell}\in\Gamma(\tilde{S},\mathcal{O}_{\tilde{S}}(\ell E))$
is a $\mathbb{G}_{a}$-extension of $P$. 

Conversely, for every $\mathbb{G}_{a}$-extension of $P$ into an
$M(\ell)$-torsor $\theta:W\rightarrow\tilde{S}$ equipped with the
$\mathbb{G}_{a,\tilde{S}}$-action induced by the canonical global
section $s_{\ell}\in\Gamma(\tilde{S},\mathcal{O}_{\tilde{S}}(\ell E))$,
it follows from Proposition \ref{prop:Torsor-Ga-action} again that
the image of the isomorphism class $h_{\ell}\in H^{1}(\tilde{S},\mathcal{O}_{\tilde{S}}(\ell E))$
of $W$ in $H^{1}(\tilde{S}\setminus E,\mathcal{O}_{\tilde{S}}(\ell E)|_{\tilde{S}\setminus E})\simeq H^{1}(S_{*},\mathcal{O}_{S_{*}})$
is equal to $g$. Letting $h\in\mathrm{colim}_{n\geq0}H^{1}(\tilde{S},\mathcal{O}_{\tilde{S}}(nE))$
be the element represented by the sequence 
\[
h_{n}=(H^{1}(j_{n-1,n}\circ\cdots\circ j_{\ell,\ell+1})(h_{\ell}))_{n\geq\ell}\in H^{1}(\tilde{S},\mathcal{O}_{\tilde{S}}(nE)),\;n\geq\ell
\]
we have $\psi(h)=g$ and since $\psi$ is an isomorphism, we conclude
that $W\simeq W(P,\ell)$. 
\end{proof}

\subsection{\label{subsec:Ga-blowup-affine}$\mathbb{G}_{a}$-extensions with
affine total spaces }

The extensions $\theta:W\rightarrow\tilde{S}$ we get from Proposition
\ref{prop:Blowup-Extension} are not necessarily affine over $S$.
In this subsection we establish a criterion for affineness which we
then use to characterize all extensions $\theta:W\rightarrow\tilde{S}$
of a $\mathbb{G}_{a}$-torsor $\rho:P\rightarrow S_{*}$ whose total
spaces $W$ are affine over $S$.
\begin{lem}
\label{lem:Affineness-Criterion} Let $S=\mathrm{Spec}(A)$ be an
affine surface and let $o=V(x,y)$ be a global scheme-theoretic complete
intersection point in the smooth locus of $S$ . Let $\tau:\tilde{S}\rightarrow S$
be the blow-up of $o$ with exceptional divisor $E$ and let $\theta:W\rightarrow\tilde{S}$
be an $M(\ell)$-torsor for some $\ell\geq0$. Then the following
hold:

a) $H^{1}(W,\mathcal{O}_{W})=0$. 

b) If $H^{1}(W,\theta^{*}\mathcal{O}_{\tilde{S}}(\ell E))=0$ for
some $\ell\geq2$ then $W$ is an affine scheme. 
\end{lem}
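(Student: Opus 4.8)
The plan is to understand $W$ and its structure sheaf directly in terms of the blow-up $\tau:\tilde{S}\to S$. Recall that $\theta:W\to\tilde{S}$ is an $M(\ell)$-torsor, so $\theta_*\mathcal{O}_W\simeq\bigoplus_{n\geq 0}\mathcal{O}_{\tilde{S}}(\ell n E)$ as an $\mathcal{O}_{\tilde{S}}$-module (this is the graded structure of the symmetric algebra of $\mathcal{M}^\vee=\mathcal{O}_{\tilde{S}}(-\ell E)$, with the torsor structure only affecting the algebra multiplication, not the module decomposition). The key point I would exploit is that $\tau$ is projective and $\tau_*\mathcal{O}_{\tilde{S}}=\mathcal{O}_S$ with $R^1\tau_*\mathcal{O}_{\tilde{S}}=0$, and more generally I must control $R^q\tau_*\mathcal{O}_{\tilde{S}}(\ell nE)$. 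Since $\mathcal{O}_{\tilde{S}}(E)|_E\simeq\mathcal{O}_{\mathbb{P}^1_\kappa}(-1)$, the line bundles $\mathcal{O}_{\tilde{S}}(\ell nE)$ have no higher direct images for $\ell n\geq 0$: the relevant cohomology along the fibers of $\tau$ is $H^1(\mathbb{P}^1_\kappa,\mathcal{O}(-m))=0$ for $m\geq 1$, exactly the vanishing already invoked in the proof of Proposition~\ref{prop:Blowup-Extension}. Thus $R^q\tau_*(\theta_*\mathcal{O}_W)=0$ for $q>0$, which via the Leray spectral sequence for $\tau\circ\theta$ lets me compute cohomology on $W$ as cohomology on $S$ of $\tau_*\theta_*\mathcal{O}_W=\bigoplus_{n\geq 0}\tau_*\mathcal{O}_{\tilde{S}}(\ell nE)$.

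\textbf{Proving part (a).} For the vanishing $H^1(W,\mathcal{O}_W)=0$, I would combine the Leray spectral sequence for $\theta$ (whose $E_2$ terms involve $H^i(\tilde{S},R^j\theta_*\mathcal{O}_W)$, but $\theta$ is affine so $R^j\theta_*\mathcal{O}_W=0$ for $j>0$) with the analysis above to reduce to $H^1(\tilde{S},\theta_*\mathcal{O}_W)=\bigoplus_{n\geq 0}H^1(\tilde{S},\mathcal{O}_{\tilde{S}}(\ell nE))$. Each summand maps injectively into $H^1(S_*,\mathcal{O}_{S_*})$ by the argument of Proposition~\ref{prop:Blowup-Extension}, but more to the point: since $S=\mathrm{Spec}(A)$ is affine and $R^q\tau_*\mathcal{O}_{\tilde{S}}(\ell nE)=0$ for $q>0$, each $H^1(\tilde{S},\mathcal{O}_{\tilde{S}}(\ell nE))=H^1(S,\tau_*\mathcal{O}_{\tilde{S}}(\ell nE))=0$ because $S$ is affine and the pushforward is quasi-coherent. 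This gives the vanishing termwise, hence $H^1(W,\mathcal{O}_W)=0$.

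\textbf{Proving part (b) via Serre's criterion.} For affineness I would apply Serre's cohomological criterion: $W$ is affine if and only if $H^q(W,\mathcal{F})=0$ for all quasi-coherent $\mathcal{F}$ and all $q>0$, but since $W$ already maps affinely to $\tilde{S}$ and $\tilde{S}$ is a surface, it suffices to check that $W$ is separated, noetherian, and that $H^1(W,\mathcal{O}_W)=0$ together with the affineness of a suitable open cover — more efficiently, I would use that $\theta$ is affine so $W$ is affine over $\tilde{S}$, and the obstruction to $W$ being globally affine is exactly the failure of $\tilde{S}$ to be affine, concentrated on $E$. The mechanism is that the canonical section $s_\ell$ with divisor $\ell E$ trivializes $\mathcal{O}_{\tilde{S}}(\ell E)$ away from $E$, and the hypothesis $H^1(W,\theta^*\mathcal{O}_{\tilde{S}}(\ell E))=0$ should be precisely what forces enough global functions on $W$ to separate points and contract $E$. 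Concretely, $\theta^*s_\ell$ is a global function (or section) on $W$ whose zero locus is $\theta^{-1}(\ell E)$; I expect to produce, using the assumed vanishing, global regular functions on $W$ that restrict on each fiber of $\theta$ over $E$ to nonconstant functions, thereby realizing $W$ as $\mathrm{Spec}$ of its ring of global sections. The condition $\ell\geq 2$ enters because $\mathcal{O}_{\tilde{S}}(\ell E)|_E\simeq\mathcal{O}_{\mathbb{P}^1}(-\ell)$ has enough room only when $-\ell\leq -2$ for the relevant extension/surjectivity statements on the projective line.

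\textbf{The main obstacle.} The delicate step is part (b): translating the single cohomological hypothesis $H^1(W,\theta^*\mathcal{O}_{\tilde{S}}(\ell E))=0$ into global affineness. The natural route is to check Serre's criterion by showing $H^1(W,\mathcal{F})=0$ for every quasi-coherent $\mathcal{F}$ on $W$, which I would attempt by filtering $\theta_*\mathcal{F}$ and reducing to the graded pieces $\mathcal{O}_{\tilde{S}}(\ell nE)$; the hypothesis for $n=1$ should bootstrap, via the torsor multiplication structure and the section $s_\ell$, to control all $n$. The danger is that the vanishing for a single twist need not obviously propagate, so I expect the heart of the argument to be an inductive or generating-type statement showing that $H^1(W,\theta^*\mathcal{O}_{\tilde{S}}(\ell E))=0$ with $\ell\geq 2$ implies $H^1(\tilde{S},\theta_*\mathcal{O}_W\otimes\mathcal{O}_{\tilde{S}}(\ell E))=0$ and thence the affineness of the contraction of $E$ inside $W$. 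I would anticipate invoking a Grauert–Riemenschneider or Serre-type vanishing on the surface $\tilde{S}$ to close the gap.
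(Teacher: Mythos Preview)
Your approach to part (a) via Leray and pushforward is cleaner than the paper's explicit \v{C}ech reduction, but it contains a sign error that derails the argument as written. For an $M(\ell)$-torsor $\theta:W\to\tilde S$ with $M(\ell)=\mathrm{Spec}(\mathrm{Sym}^\cdot\mathcal{O}_{\tilde S}(-\ell E))$, the sheaf $\theta_*\mathcal{O}_W$ carries a filtration (not in general a splitting, since the torsor may be nontrivial) whose associated graded pieces are $(\mathcal{M}^\vee)^{\otimes n}=\mathcal{O}_{\tilde S}(-\ell nE)$, not $\mathcal{O}_{\tilde S}(\ell nE)$. This matters: your claimed vanishing $H^1(\mathbb{P}^1_\kappa,\mathcal{O}(-m))=0$ for $m\ge 1$ is false (the dimension is $m-1$), so $R^1\tau_*\mathcal{O}_{\tilde S}(\ell nE)$ does \emph{not} vanish for $\ell n\ge 2$. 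With the correct sign the pieces are $\mathcal{O}_{\tilde S}(-\ell nE)=\mathcal{I}_E^{\ell n}$, for which $R^1\tau_*=0$ and $\tau_*\mathcal{I}_E^{\ell n}=\mathfrak{m}_o^{\ell n}$, and then your Leray argument goes through. The paper instead computes directly on the two-chart cover of $\tilde S$ coming from the projection $p:\tilde S\to\mathbb{P}^1_k$, reducing an arbitrary \v{C}ech cocycle by induction on the degree in the fiber coordinate $u$; your corrected argument is more conceptual and equally valid.

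Part (b) is where there is a genuine gap. You correctly diagnose that the issue is propagating the single hypothesis $H^1(W,\theta^*\mathcal{O}_{\tilde S}(\ell E))=0$ to affineness, but your sketch via Serre's criterion and ``bootstrapping through the torsor multiplication'' is not an argument, and invoking Grauert--Riemenschneider does not help here: the sheaves $\mathcal{O}_{\tilde S}(\ell nE)$ are anti-ample along $E$ and there is no evident inductive mechanism. The paper's proof is a completely different, short geometric trick. One takes any nontrivial $\mathcal{O}_{\mathbb{P}^1_k}(-\ell)$-torsor $\eta:V\to\mathbb{P}^1_k$; since $\ell\ge 2$, the total space $V$ is affine. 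The fiber product $W\times_{\mathbb{P}^1_k}V$ (formed via $p\circ\theta$) is then affine because $p\circ\theta$ is an affine morphism. On the other hand, $\mathrm{pr}_W:W\times_{\mathbb{P}^1_k}V\to W$ is a $\theta^*M(\ell)$-torsor, hence trivial by the hypothesis; so $W$ sits as the zero section inside the affine scheme $W\times_{\mathbb{P}^1_k}V\simeq\theta^*M(\ell)$, and is therefore affine. This is the step your proposal is missing.
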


\begin{proof}
Since $o$ is a scheme-theoretic complete intersection, we can identify
$\tilde{S}$ with the closed subvariety of $S\times_{k}\mathbb{P}_{k}^{1}=S\times_{k}\mathrm{Proj}(k[t_{0},t_{1}])$
defined by the equation $xt_{1}-yt_{0}=0$. The restriction $p:\tilde{S}\rightarrow\mathbb{P}_{k}^{1}$
of the projection to the second factor is an affine morphism. More
precisely, letting $U_{0}=\mathbb{P}_{k}^{1}\setminus\{[1:0]\}\simeq\mathrm{Spec}(k[z])$
and $U_{\infty}=\mathbb{P}_{k}^{1}\setminus\{[0:1]\}\simeq\mathrm{Spec}(k[z'])$
be the standard affine open cover of $\mathbb{P}_{k}^{1}$, we have
$p^{-1}(U_{0})\simeq\mathrm{Spec}(A[z]/(x-yz)$ and $p^{-1}(U_{\infty})\simeq\mathrm{Spec}(A[z']/(y-xz'))$.
The exceptional divisor $E\simeq\mathbb{P}_{\kappa}^{1}$ of $\tau:\tilde{S}\rightarrow S$
is a flat quasi-section of $p$ with local equations $y=0$ and $x=0$
in the affine charts $p^{-1}(U_{0})$ and $p^{-1}(U_{\infty})$ respectively.
Every $M(\ell)$-torsor $\theta:W\rightarrow\tilde{S}$ for some $\ell\geq0$
is isomorphic to the scheme obtained by gluing $W_{0}=p^{-1}(U_{0})\times\mathrm{Spec}(k[u])$
with $W_{\infty}=p^{-1}(U_{\infty})\times\mathrm{Spec}(k[u'])$ over
$U_{0}\cap U_{\infty}$ by an isomorphism induced by a $k$-algebra
isomorphism of the form 
\[
A[(z')^{\pm1}]/(y-xz')[u']\ni(z',u')\mapsto(z^{-1},z^{\ell}u+p)\in A[z^{\pm1}]/(x-yz)[u]
\]
for some $p\in A[z^{\pm1}]/(x-yz)$. Since $H^{1}(W,\mathcal{O}_{W})\simeq\check{H}^{1}(W,\mathcal{O}_{W})\simeq\check{H}^{1}(\{W_{0},W_{\infty}\},\mathcal{O}_{W})$,
it is enough in order to prove a) to check that every \v{C}ech $1$-cocycle
$g$ with value in $\mathcal{O}_{W}$ for the covering of $W$ by
the affine open subsets $W_{0}$ and $W_{\infty}$ is a coboundary.
Viewing $g$ as an element $g=g(z^{\pm1},u)\in A[z^{\pm1}]/(x-yz)[u]$,
it is enough to show that every monomial $g_{s}=hz^{r}u^{s}$ where
$h\in A$, $r\in\mathbb{Z}$ and $s\in\mathbb{Z}_{\geq0}$ is a coboundary,
which is the case if and only if there exist $a(z,u)\in A[z]/(f-gz)[u]$
and $b(z',u')\in A[z']/(y-xz')[u']$ such that $g=b(z^{-1},z^{\ell}u+p)-b(z,u)$.
If $r\geq0$ then $g\in A[z]/(x-yz)[u]$ is a coboundary. We thus
assume from now on that $r<0$. Suppose that $s>0$. Then we can write
$u^{s}=z^{-\ell s}(z^{\ell}u+p)^{s}-R(u)$ where $R\in A[z^{\pm1}]/(x-yz)[u]$
is polynomial whose degree in $u$ is strictly less than $s$. Then
since $r<0$, 
\begin{align*}
hz^{r}u^{s} & =hz^{r-\ell s}(z^{\ell}u+p)^{s}-hz^{r}R(u)\\
 & =b(z^{-1},z^{\ell}u+p)-hz^{r}R(u)
\end{align*}
where $b(z',u')=h(z')^{-r+\ell s}(u')^{s}\in A[z']/(y-xz')[u']$.
So $g_{s}$ is a coboundary if and only if $-hz^{r}R(u)$ is. By induction,
we only need to check that every monomial $g_{0}=hz^{r}\in A[z^{\pm1}]/(x-yz)[u]$
of degree $0$ in $u$ is a coboundary. But such a cocycle is simply
the pull-back to $W$ of a \v{C}ech $1$-cocycle $h_{0}$ with value
in $\mathcal{O}_{\tilde{S}}$ for the covering of $\tilde{S}$ by
the affine open subsets $p^{-1}(U_{0})$ and $p^{-1}(U_{\infty})$.
Since the canonical homomorphism 
\[
H^{1}(S,\mathcal{O}_{S})=H^{1}(S,\tau_{*}\mathcal{O}_{\tilde{S}})\rightarrow H^{1}(\tilde{S},\mathcal{O}_{\tilde{S}})\simeq\check{H}^{1}(\{p^{-1}(U_{0}),p^{-1}(U_{\infty})\},\mathcal{O}_{\tilde{S}})
\]
is an isomorphism and $H^{1}(S,\mathcal{O}_{S})=0$ as $S$ is affine,
we conclude that $h_{0}$ is a coboundary, hence that $g_{0}$ is
a coboundary too. This proves a). 

Now suppose that $H^{1}(W,\theta^{*}\mathcal{O}_{\tilde{S}}(\ell E))=0$
for some $\ell\geq2$. Let $\eta:V\rightarrow\mathbb{P}_{k}^{1}$
be a non trivial $\mathcal{O}_{\mathbb{P}_{k}^{1}}(-\ell)$-torsor
and consider the fiber product $W\times_{p\circ\theta,\mathbb{P}_{k}^{1},\eta}V$:
\[\xymatrix@C-3ex{ & W\times_{p\circ\theta,\mathbb{P}^1_k,\eta } V \ar[dl] \ar[dr] \\ W \ar[dr]_{p\circ \theta} & & V \ar[dl]^{\eta} \\ & \mathbb{P}^1_k}\]
By virtue of \cite[Proposition 3.1]{Du15}, $V$ is an affine surface.
Since $p\circ\theta:W\rightarrow\mathbb{P}_{k}^{1}$ is an affine
morphism, so is $\mathrm{pr}_{V}:W\times_{\mathbb{P}_{k}^{1}}V\rightarrow V$
and hence, $W\times_{\mathbb{P}_{k}^{1}}V$ is an affine scheme. On
the other hand, since $p^{*}\mathcal{O}_{\mathbb{P}_{k}^{1}}(-1)\simeq\mathcal{O}_{\tilde{S}}(E)$,
the projection $\mathrm{pr}_{W}:W\times_{\mathbb{P}_{k}^{1}}V\rightarrow W$
is a $\theta^{*}M(\ell)$-torsor, hence is isomorphic to the trivial
one $q:\theta^{*}M(\ell)\rightarrow W$ by hypothesis. So $W$ is
isomorphic to the zero section of $\theta^{*}M(\ell)$, which is a
closed subscheme of the affine scheme $W\times_{\mathbb{P}_{k}^{1}}V$,
hence an affine scheme. 
\end{proof}
We are now ready to prove the following characterization: 
\begin{thm}
\label{prop:Main-Ext-Prop} A $\mathbb{G}_{a,S_{*}}$-torsor $\rho:P\rightarrow S_{*}$
admits a $\mathbb{G}_{a}$-extension to a locally trivial $\mathbb{A}^{1}$-bundle
whose total space is affine over $S$ if and only if for every Zariski
open neighborhood $U$ of $o$, $P\times_{S_{*}}U\rightarrow U_{*}=U\setminus\{o\}$
is a non trivial $\mathbb{G}_{a,U_{*}}$-torsor. 

When it exists, the corresponding locally trivial $\mathbb{A}^{1}$-bundle
$\theta:W\rightarrow\tilde{S}$ is unique and is an $M(\ell_{0})$-torsor
for some $\ell_{0}\geq2$, whose restriction to $E\simeq\mathbb{P}_{\kappa}^{1}$
is a non trivial $\mathcal{O}_{\mathbb{P}_{\kappa}^{1}}(-\ell_{0})$-torsor. 
\end{thm}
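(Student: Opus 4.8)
The plan is to combine the classification of Type~I extensions from Proposition~\ref{prop:Blowup-Extension} with the affineness criterion of Lemma~\ref{lem:Affineness-Criterion}, after reducing to the situation covered by the latter. First I would observe that being an affine morphism is local on the base, and that the restriction of any Type~I extension $\pi=\tau\circ\theta\colon W\to S$ over $S_{*}$ is the $\mathbb{G}_{a}$-torsor $\rho\colon P\to S_{*}$, which is automatically affine. Since the condition involving neighbourhoods of $o$ only sees a cofinal system of opens, I may therefore replace $S$ by a small affine neighbourhood of $o$ on which $o=V(x,y)$ is a scheme-theoretic complete intersection; over such an $S$, the property that $\pi$ is affine over $S$ is equivalent to $W$ being an affine scheme, and Lemma~\ref{lem:Affineness-Criterion} applies verbatim.

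Next I would analyse the minimal level $\ell_{0}$ attached to the class $g\in H^{1}(S_{*},\mathcal{O}_{S_{*}})$ of $P$ in the colimit $\mathrm{colim}_{n}H^{1}(\tilde{S},\mathcal{O}_{\tilde{S}}(nE))$, whose transition maps are injective. Because $S$ is affine, $H^{1}(\tilde{S},\mathcal{O}_{\tilde{S}})\simeq H^{1}(S,\mathcal{O}_{S})=0$, and the long exact sequence of (\ref{eq:fond-sequence}) for $n=1$ together with $H^{1}(\mathbb{P}^{1}_{\kappa},\mathcal{O}(-1))=0$ forces $H^{1}(\tilde{S},\mathcal{O}_{\tilde{S}}(E))=0$ as well. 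Consequently $\ell_{0}=0$ exactly when $g=0$, whereas if $g\neq 0$ the minimality of $\ell_{0}$ means that $h_{\ell_{0}}$ is not in the image of the injective transition map, so by exactness of (\ref{eq:fond-sequence}) its restriction $r_{\ell_{0}}(h_{\ell_{0}})\in H^{1}(E,\mathcal{O}(-\ell_{0}))$ is nonzero; in particular $H^{1}(E,\mathcal{O}(-\ell_{0}))\neq 0$, forcing $\ell_{0}\geq 2$, and $W(P,\ell_{0})|_{E}$ is a nontrivial $\mathcal{O}_{\mathbb{P}^{1}_{\kappa}}(-\ell_{0})$-torsor.

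I would then prove the equivalence between the stated condition and $\ell_{0}\geq 2$. If $\ell_{0}\geq 2$, then for any neighbourhood $U$ of $o$ the restriction-to-$E$ map factors through $H^{1}(\tilde{U},\mathcal{O}_{\tilde{U}}(\ell_{0}E))$, because $E\subset\tilde{U}=\tau^{-1}(U)$; the nonvanishing of $r_{\ell_{0}}(h_{\ell_{0}})$ thus shows that the image of $h_{\ell_{0}}$ in $H^{1}(\tilde{U},\mathcal{O}_{\tilde{U}}(\ell_{0}E))$ is nonzero, and since the analogous transition maps on $\tilde{U}$ are again injective, this image injects into $H^{1}(U_{*},\mathcal{O}_{U_{*}})$, giving $g|_{U_{*}}\neq 0$, i.e. $P|_{U_{*}}$ nontrivial. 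Conversely, taking $U=S$ shows that the condition forces $g\neq0$, hence $\ell_{0}\geq 2$. For uniqueness I would note that for every $\ell>\ell_{0}$ the class $h_{\ell}$ lies in the image of the injective transition map, so $r_{\ell}(h_{\ell})=0$, the torsor $W(P,\ell)|_{E}$ is trivial, and the fibre $\pi^{-1}(o)=\theta^{-1}(E)$ is then the total space of a line bundle over $E\simeq\mathbb{P}^{1}_{\kappa}$; this contains $E$ as a closed subscheme and is therefore not affine, so no extension with $\ell>\ell_{0}$ is affine over $S$, leaving $W(P,\ell_{0})$ as the only candidate.

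It remains to show that $W(P,\ell_{0})$ is actually an affine scheme when $\ell_{0}\geq 2$, and this is where I expect the main difficulty. By Lemma~\ref{lem:Affineness-Criterion}(b) it suffices to establish the vanishing $H^{1}(W,\theta^{*}\mathcal{O}_{\tilde{S}}(\ell_{0}E))=0$. Unlike the unconditional vanishing in part (a), the twisted cohomology has a potentially nonzero contribution coming from the bottom graded piece $H^{1}(\tilde{S},\mathcal{O}_{\tilde{S}}(\ell_{0}E))$ of the natural filtration of $\theta^{*}\mathcal{O}_{\tilde{S}}(\ell_{0}E)\otimes\theta_{*}\mathcal{O}_{W}$, so the computation cannot merely mimic part (a). The hard part will be to show that this contribution is annihilated precisely by the gluing cocycle of $W$: concretely, on the cover $\{W_{0},W_{\infty}\}$ of the lemma I would expand a \v{C}ech $1$-cocycle in powers of $u$ and $z$ and use the affine gluing $u'=z^{\ell_{0}}u+p$, the point being that the nontriviality of $p$ modulo $E$ (equivalently $r_{\ell_{0}}(h_{\ell_{0}})\neq0$) supplies the coboundary needed to kill exactly the term measured by $H^{1}(\tilde{S},\mathcal{O}_{\tilde{S}}(\ell_{0}E))$. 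Granting this vanishing, Lemma~\ref{lem:Affineness-Criterion}(b) yields that $W(P,\ell_{0})$ is affine, and assembling the pieces gives both the equivalence and the asserted uniqueness and structure, namely that the extension is the $M(\ell_{0})$-torsor with $\ell_{0}\geq 2$ and nontrivial restriction to $E$.
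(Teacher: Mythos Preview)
Your reduction to an affine $S$ with $o=V(x,y)$, your analysis of $\ell_0$ via the short exact sequences (\ref{eq:fond-sequence}) (including the observation that $H^1(\tilde S,\mathcal{O}_{\tilde S})=H^1(\tilde S,\mathcal{O}_{\tilde S}(E))=0$ forces $\ell_0\geq 2$ whenever $g\neq 0$ and that $r_{\ell_0}(h_{\ell_0})\neq 0$ while $r_\ell(h_\ell)=0$ for $\ell>\ell_0$), and your necessity and uniqueness arguments all match the paper's proof.

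The divergence is in your final step, the verification that $W=W(P,\ell_0)$ is actually affine. You aim for $H^1(W,\theta^*\mathcal{O}_{\tilde S}(\ell_0 E))=0$ by a direct \v{C}ech computation exploiting the gluing cocycle $p$, and you rightly identify this as the hard part. The paper avoids this computation entirely by means of an observation you are missing: the divisor $D=\theta^{-1}(E)=W|_E$ is itself an \emph{affine} surface, because a nontrivial $\mathcal{O}_{\mathbb{P}^1_\kappa}(-\ell_0)$-torsor with $\ell_0\geq 2$ has affine total space \cite{Du15}. Once $D$ is affine, $H^1(D,\mathcal{O}_W((n+1)D)|_D)=0$ for every $n$, and the long exact sequence associated to
\[
0\longrightarrow\mathcal{O}_W(nD)\longrightarrow\mathcal{O}_W((n+1)D)\longrightarrow\mathcal{O}_W((n+1)D)|_D\longrightarrow 0
\]
together with Lemma~\ref{lem:Affineness-Criterion}(a) as base case yields successively $H^1(W,\mathcal{O}_W(D))=0$ and $H^1(W,\mathcal{O}_W(2D))=0$. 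Since $\mathcal{O}_W(2D)\simeq\theta^*\mathcal{O}_{\tilde S}(2E)$, Lemma~\ref{lem:Affineness-Criterion}(b) applies with $\ell=2$, independently of the value of $\ell_0$. Your direct attack is plausible and would ultimately succeed (the bootstrap in fact shows $H^1(W,\theta^*\mathcal{O}_{\tilde S}(nE))=0$ for all $n\geq 0$), but it would amount to redoing by hand, inside the \v{C}ech calculation, the affineness of $D$ that \cite{Du15} already provides; the paper's two-line bootstrap is the cleaner route.
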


\begin{proof}
The scheme $W$ is affine over $S$ if and only if its restriction
$W|_{E}$ over $E\subset\tilde{S}$ is a nontrivial torsor. Indeed,
if $W|_{E}$ is a trivial torsor then it is a line bundle over $E\simeq\mathbb{P}_{\kappa}^{1}$.
Its zero section is then a proper curve contained in the fiber of
$\pi=\tau\circ\theta:W\rightarrow S$, which prevents $\pi$ from
being an affine morphism. Conversely, if $W|_{E}$ is nontrivial,
then it is a torsor under a uniquely determined line bundle $\mathcal{O}_{\mathbb{P}_{\kappa}^{1}}(-m)$
for some $m\geq2$ necessarily. Since by construction $\pi$ restricts
over $S_{*}$ to $\rho:P\rightarrow S_{*}$ which is an affine morphism,
$\pi$ is affine if and only if there exists an open neighborhood
$U$ of $o$ in $S$ such that $\pi^{-1}(U)$ is affine. Replacing
$S$ by a suitable affine open neighborhood of $o$, we can therefore
assume without loss of generality that $S=\mathrm{Spec}(A)$ is affine
and that $o$ is a scheme-theoretic complete intersection $o=V(x,y)$
for some elements $x,y\in A$. By virtue of \cite[Proposition 3.1]{Du15}
every nontrivial $\mathcal{O}_{\mathbb{P}_{\kappa}^{1}}(-m)$-torsor,
$m\geq2$ , has affine total space. The Cartier divisor $D=W|_{E}$
in $W$ is thus an affine surface, and so $H^{1}(D,\mathcal{O}_{W}((n+1)D)|_{D})=0$
for every $n\in\mathbb{Z}$. By a) in Lemma \ref{lem:Affineness-Criterion},
$H^{1}(W,\mathcal{O}_{W})=0$, and we deduce successively from the
long exact sequence of cohomology for the short exact sequence
\[
0\rightarrow\mathcal{O}_{W}(nD)\rightarrow\mathcal{O}_{W}((n+1)D)\rightarrow\mathcal{O}_{W}((n+1)D)\mid_{D}\rightarrow0
\]
in the case $n=0$ and then $n=1$ that $H^{1}(W,\mathcal{O}_{W}(D))=H^{1}(W,\mathcal{O}_{W}(2D))=0$.
Since $\mathcal{O}_{W}(2D)\simeq\theta^{*}\mathcal{O}_{\tilde{S}}(2E)$,
we conclude from b) in the same lemma that $W$ is affine. 

The condition that $P\times_{S_{*}}U\rightarrow U_{*}$ is nontrivial
for every open neighborhood $U$ of $o$ is necessary for the existence
of an extension $\theta:W\rightarrow\tilde{S}$ of $P$ for which
$W|_{E}$ is a nontrivial torsor. Indeed, if there exists a Zariski
open neighborhood $U$ of $o$ such that the restriction of $P$ over
$U_{*}$ is the trivial $\mathbb{G}_{a,U_{*}}$-torsor, then the image
in $H^{1}(U_{*},\mathcal{O}_{U_{*}})$ of the isomorphism class $g$
of $P$ is zero and so, arguing as in the proof of Proposition \ref{prop:Blowup-Extension},
every $\mathbb{G}_{a}$-extension $\theta:W\rightarrow\tilde{S}$
restricts on $\tau^{-1}(U)$ to the trivial $M(\ell)|_{\tau^{-1}(U)}$-torsor
$M(\ell)|_{\tau^{-1}(U)}\rightarrow\tau^{-1}(U)$, hence to a trivial
torsor on $E\subset\tau^{-1}(U)$. 

Now suppose that $\rho:P\rightarrow S_{*}$ is a $\mathbb{G}_{a,S_{*}}$-torsor
with isomorphism class $g\in H^{1}(S_{*},\mathcal{O}_{S_{*}})$ such
that $P\times_{S_{*}}U\rightarrow U_{*}$ is non trivial for every
open neighborhood $U$ of $o$. The inverse image $h=\psi^{-1}(g)\in\mathrm{colim}_{n\geq0}H^{1}(\tilde{S},\mathcal{O}_{\tilde{S}}(nE))$
of $g$ by the isomorphism (\ref{eq:cano-iso}) is represented by
a sequence of nonzero elements $h_{n}\in H^{1}(\tilde{S},\mathcal{O}_{\tilde{S}}(nE))$
as in (\ref{eq:colim-sequence}) above. By the long exact sequence
of cohomology of the short exact sequence (\ref{eq:fond-sequence}),
the image $\overline{h}_{n}$ of $h_{n}$ in $H^{1}(E,\mathcal{O}_{\tilde{S}}(nE)|_{E})\simeq H^{1}(\mathbb{P}^{1},\mathcal{O}_{\mathbb{P}_{\kappa}^{1}}(-n))$
is nonzero if and only if $h_{n}$ is not in the image of the injective
homomorphism $H^{1}(j_{n,n-1})$. Since $\overline{h}_{n}$ coincides
with the isomorphism class of the restriction $W_{n}|_{E}$ of an
$M(n)$-torsor $\theta_{n}:W_{n}\rightarrow\tilde{S}$ with isomorphism
class $h_{n}$, we conclude that there exists a unique $\ell_{0}\geq2$
such that the restriction to $E$ of an $M(\ell_{0})$-torsor $\theta_{\ell_{0}}:W_{\ell_{0}}\rightarrow\tilde{S}$
with isomorphism class $h_{\ell_{0}}\in H^{1}(\tilde{S},\mathcal{O}_{\tilde{S}}(\ell_{0}E))$
is a nontrivial $\mathcal{O}_{\mathbb{P}_{\kappa}^{1}}(-\ell_{0})$-torsor. 
\end{proof}

\subsection{Examples}

In this subsection, we consider $\mathbb{G}_{a}$-torsors of the punctured
affine plane. So $S=\mathbb{A}^{2}=\mathrm{Spec}(k[x,y])$, $o=(0,0)$
and $\mathbb{A}_{*}^{2}=\mathbb{A}^{2}\setminus\{o\}$. We let $\tau:\tilde{\mathbb{A}}^{2}\rightarrow\mathbb{A}^{2}$
be the blow-up of $o$, with exceptional divisor $E\simeq\mathbb{P}^{1}$
and we let $i:\mathbb{A}_{*}^{2}\hookrightarrow\tilde{\mathbb{A}}^{2}$
be the immersion of $\mathbb{A}_{*}^{2}$ as the open subset $\tilde{\mathbb{A}}^{2}\setminus E$.
We further identify $\tilde{\mathbb{A}}^{2}$ with the total space
$f:\tilde{\mathbb{A}}^{2}\rightarrow\mathbb{P}^{1}$ of the line bundle
$\mathcal{O}_{\mathbb{P}^{1}}(-1)$ in such a way that $E$ corresponds
to the zero section of this line bundle.

\subsubsection{A simple case: homogeneous $\mathbb{G}_{a}$-torsors }

Following \cite[\S 1.3]{DF14}, we say that a non trivial $\mathbb{G}_{a,\mathbb{A}_{*}^{2}}$-torsor
$\rho:P\rightarrow\mathbb{A}_{*}^{2}$ is homogeneous if it admits
a lift of the $\mathbb{G}_{m}$-action $\lambda\cdot\left(x,y\right)=\left(\lambda x,\lambda y\right)$
on $\mathbb{A}_{*}^{2}$ which is locally linear on the fibers of
$\rho$. By \cite[Proposition 1.6]{DF14}, this is the case if and
only if the isomorphism class $g$ of $P$ in $H^{1}(\mathbb{A}_{*}^{2},\mathcal{O}_{\mathbb{A}_{*}^{2}})$
can be represented on the open covering of $\mathbb{A}_{*}^{2}$ by
the principal open subsets $\mathbb{A}_{x}^{2}$ and $\mathbb{A}_{y}^{2}$
by a \v{C}ech $1$-cocycle of the form $x^{-m}y^{-n}p\left(x,y\right)$
where $m,n\geq0$ and $p(x,y)\in k[x,y]$ is a homogeneous polynomial
of degree $r\leq m+n-2$. Equivalently, $P$ is isomorphic the $\mathbb{G}_{a,\mathbb{A}_{*}^{2}}$-torsor
\[
\rho=\mathrm{pr}_{x,y}:P_{m,n,p}=\left\{ x^{m}v-y^{n}u=p(x,y)\right\} \setminus\{x=y=0\}\rightarrow\mathbb{A}_{*}^{2},
\]
which admits an obvious lift $\lambda\cdot\left(x,y,u,v\right)=\left(\lambda x,\lambda y,\lambda^{m-d}u,\lambda^{n-d}v\right)$,
where $d=m+n-r$, of the $\mathbb{G}_{m}$-action on $\mathbb{A}_{*}^{2}$.
Let $q:\mathbb{A}_{*}^{2}\rightarrow\mathbb{A}_{*}^{2}/\mathbb{G}_{m}=\mathbb{P}^{1}$
be the quotient morphism of the aforementioned $\mathbb{G}_{m}$-action
on $\mathbb{A}_{*}^{2}$. Then it follows from \cite[Example 1.8]{DF14}
that the inverse image by the canonical isomorphism 
\[
\bigoplus_{k\in\mathbb{Z}}H^{1}(\mathbb{P}^{1},\mathcal{O}_{\mathbb{P}}(k))\simeq H^{1}(\mathbb{P}^{1},q_{*}\mathcal{O}_{\mathbb{A}_{*}^{2}})\rightarrow H^{1}(\mathbb{A}_{*}^{2},\mathcal{O}_{\mathbb{A}_{*}^{2}})
\]
of the isomorphism class $g$ of such an homogeneous torsor is an
element $h$ of $H^{1}(\mathbb{P}^{1},\mathcal{O}_{\mathbb{P}}(-d))$.
Furthermore, the $\mathbb{G}_{m}$-equivariant morphism $\rho:P\rightarrow\mathbb{A}_{*}^{2}$
descends to a locally trivial $\mathbb{A}^{1}$-bundle $\overline{\rho}:P/\mathbb{G}_{m}\rightarrow\mathbb{P}^{1}=\mathbb{A}_{*}^{2}/\mathbb{G}_{m}$
which is an $\mathcal{O}_{\mathbb{P}^{1}}(-d)$-torsor with isomorphism
class $h\in H^{1}(\mathbb{P}^{1},\mathcal{O}_{\mathbb{P}}(-d))$.

Since $f^{*}\mathcal{O}_{\mathbb{P}^{1}}(-d)\simeq\mathcal{O}_{\tilde{\mathbb{A}}^{2}}(dE)$,
the fiber product $W(P,d)=\tilde{\mathbb{A}}^{2}\times_{\mathbb{P}^{1}}P/\mathbb{G}_{m}$
is equipped via the restriction of the first projection with the structure
of an $M(d)$-torsor $\theta:W(P,d)\rightarrow\tilde{\mathbb{A}}^{2}$
with isomorphism class $f^{*}h\in H^{1}(\tilde{\mathbb{A}}^{2},\mathcal{O}_{\tilde{\mathbb{A}}^{2}}(dE))$.
On the other other hand, $W(P,d)$ is a line bundle over $P/\mathbb{G}_{m}$
via the second projection, hence is an affine threefold as $P/\mathbb{G}_{m}$
is affine. By construction, we have a commutative diagram \[\xymatrix@C+2ex@R-1ex{ & W(P,d) \ar@{->}'[d]^-{\theta}[dd] \ar[dr] \\ P \ar[dd]_{\rho} \ar[ur]^{j} \ar[rr] & & P/\mathbb{G}_m \ar[dd]_{\overline{\rho}} \\ & \tilde{\mathbb{A}}^2 \ar[dr]^{f} \\ \mathbb{A}^2_{*} \ar[ur]^{i} \ar[rr]^{q} && \mathbb{P}^1}\]in
which each square is cartesian. In other words, $W(P,d)$ is obtained
from the $\mathbb{G}_{m}$-torsor $P\rightarrow P/\mathbb{G}_{m}$
by ``adding the zero section''. The open embedding $j:P\hookrightarrow W(P,d)$
is equivariant for the $\mathbb{G}_{a}$-action on $W(P,d)$ induced
by the canonical global section of $\mathcal{O}_{\tilde{\mathbb{A}}^{2}}(dE)$
with divisor $dE$ (see Proposition \ref{prop:Torsor-Ga-action}).
By Theorem \ref{prop:Main-Ext-Prop} , $\theta:W(P,d)\rightarrow\tilde{\mathbb{A}}^{2}$
is the unique $\mathbb{G}_{a}$-extension of $\rho:P\rightarrow\mathbb{A}_{*}^{2}$
with affine total space. 

In the simplest case $d=2$, the unique homogeneous $\mathbb{G}_{a,\mathbb{A}_{*}^{2}}$-torsor
is the geometric quotient $\mathrm{SL}_{2}\rightarrow\mathrm{SL}_{2}/\mathbb{G}_{a}$
of the group $\mathrm{SL}_{2}$ by the action of its subgroup of upper
triangular unipotent matrices equipped with the diagonal $\mathbb{G}_{m}$-action,
and we recover Example \ref{subsec:Homogeneous-Ga-torsors}. 

\subsubsection{General case}

Here, given an arbitrary non trivial $\mathbb{G}_{a}$-torsor $\rho:P\rightarrow\mathbb{A}_{*}^{2}$,
we describe a procedure to explicitly determine the unique $\mathbb{G}_{a}$-extension
$\theta:W\rightarrow\tilde{\mathbb{A}}^{2}$ of $P$ with affine total
space $W$ from a \v{C}ech $1$-cocycle $x^{-m}y^{-n}p\left(x,y\right)$,
where $m,n\geq0$ and $p(x,y)\in k[x,y]$ is a non zero polynomial
of degree $r\leq m+n-2$, representing the isomorphism class $g\in H^{1}(\mathbb{A}_{*}^{2},\mathcal{O}_{\mathbb{A}_{*}^{2}})$
of $P$ on the open covering of $\mathbb{A}_{*}^{2}$ by the principal
open subsets $\mathbb{A}_{x}^{2}$ and $\mathbb{A}_{y}^{2}$. 

Write $p(x,y)=p_{d}+p_{d+1}+\cdots+p_{r}$ where the $p_{i}\in k[x,y]$
are the homogeneous components of $p$, and $p_{d}\neq0$. In the
decomposition 
\[
H^{1}(\mathbb{A}_{*}^{2},\mathcal{O}_{\mathbb{A}_{*}^{2}})\simeq H^{1}(\mathbb{P}^{1},q_{*}\mathcal{O}_{\mathbb{A}_{*}^{2}})\simeq\bigoplus_{s\in\mathbb{Z}}H^{1}(\mathbb{P}^{1},\mathcal{O}_{\mathbb{P}^{1}}(s))
\]
a non zero homogeneous component $x^{-m}y^{-n}p_{i}$ of $x^{-m}y^{-n}p(x,y)$
corresponds to a non zero element of $H^{1}(\mathbb{P}^{1},\mathcal{O}_{\mathbb{P}^{1}}(-m-n+i))$.
On the other hand, since for every $\ell\in\mathbb{Z}$, $\mathcal{O}_{\tilde{\mathbb{A}}^{2}}(\ell E)=f^{*}\mathcal{O}_{\mathbb{P}^{1}}(-\ell)$
and $f:\tilde{\mathbb{A}}^{2}\rightarrow\mathbb{P}^{1}$ is the total
space of the line bundle $\mathcal{O}_{\mathbb{P}^{1}}(-1)$, it follows
from the projection formula that 
\[
H^{1}(\tilde{\mathbb{A}}^{2},\mathcal{O}_{\tilde{\mathbb{A}}^{2}}(\ell E))\simeq H^{1}(\mathbb{P}^{1},f_{*}\mathcal{O}_{\tilde{\mathbb{A}}^{2}}\otimes\mathcal{O}_{\mathbb{P}^{1}}(-\ell))\simeq\bigoplus_{t\geq0}H^{1}(\mathbb{P}^{1},\mathcal{O}_{\mathbb{P}^{1}}(t-\ell)).
\]
The image of $x^{-m}y^{-n}p\left(x,y\right)$ in $\bigoplus_{s\in\mathbb{Z}}H^{1}(\mathbb{P}^{1},\mathcal{O}_{\mathbb{P}^{1}}(s))$
belongs to $\bigoplus_{t\geq0}H^{1}(\mathbb{P}^{1},\mathcal{O}_{\mathbb{P}^{1}}(t-\ell))$
if and only if $\ell\geq\ell_{0}=m+n-d\geq2$. Given such an $\ell$,
the image $(h_{t})_{t\geq0}\in\bigoplus_{t\geq0}H^{1}(\mathbb{P}^{1},\mathcal{O}_{\mathbb{P}^{1}}(t-\ell))$
of $x^{-m}y^{-n}p\left(x,y\right)$ then defines a unique $M(\ell)$-torsor
$\theta_{\ell}:W(P,\ell)\rightarrow\tilde{\mathbb{A}}^{2}$ whose
restriction over the complement of $E$ is isomorphic to $\rho:P\rightarrow\mathbb{A}_{*}^{2}$
when equipped with the action $\mathbb{G}_{a}$-action induced by
the canonical section of $\mathcal{O}_{\tilde{\mathbb{A}}^{2}}(\ell E)$
with divisor $\ell E$. On the other hand, the restriction of $W|_{E}\rightarrow E$
over $E$ is an $\mathcal{O}_{\mathbb{P}^{1}}(-\ell)$-torsor with
isomorphism class $h_{0}\in H^{1}(\mathbb{P}^{1},\mathcal{O}_{\mathbb{P}^{1}}(-\ell))$.
By definition, $h_{0}$ is non zero if and only if $\ell=\ell_{0}$,
and we conclude from Theorem \ref{prop:Main-Ext-Prop} that $\theta_{\ell_{0}}:W(P,\ell_{0})\rightarrow\tilde{\mathbb{A}}^{2}$
is the unique $\mathbb{G}_{a}$-extension of $\rho:P\rightarrow\tilde{\mathbb{A}}^{2}$
with affine total space. 

\section{Quasi-projective $\mathbb{G}_{a}$-extensions of Type II }

In this section we consider the following subclass of extensions of
Type II of a $\mathbb{G}_{a}$-torsor over a punctured surface.
\begin{defn}
\label{def:Proper-Extension} A $\mathbb{G}_{a}$-extension $\pi:X\rightarrow S$
of a $\mathbb{G}_{a}$-torsor $\rho:P\rightarrow S_{*}$ over a punctured
surface $S_{*}=S\setminus\{o\}$ is said to be a\emph{ quasi-projective}
extension of Type II\emph{ }if it satisfies the following properties

i) $X$ is quasi-projective over $S$ and the $\mathbb{G}_{a,S}$-action
on $X$ is proper, 

ii) $X$ is smooth along $\pi^{-1}(o)$ and $\pi^{-1}(o)_{\mathrm{red}}\simeq\mathbb{A}_{\kappa}^{2}$.
\end{defn}

\begin{example}
\label{exa:ProperExtension} Let $o=V(x,y)$ be a global scheme-theoretic
complete intersection closed point in the smooth locus of a surface
$S$ and let $\rho:P\rightarrow S\setminus\{o\}$ be the $\mathbb{G}_{a}$-torsor
with defining sheaf of ideals $(xv-yu-1)\subset\mathcal{O}_{S}[u,v]$
as in Example \ref{subsec:Homogeneous-Ga-torsors}. Let $\pi_{1}:X_{1}\rightarrow S$
be the affine $S$-scheme with defining sheaf of ideals $(xw-y(yz_{1}+1),xz_{2}-z_{1}(yz_{1}+1),z_{1}w-yz_{2})\subset\mathcal{O}_{S}[z_{1},z_{2},w]$.
The morphism of $S$-schemes $j_{1}:P\rightarrow X_{1}$ defined by
$(x,y,u,v)\mapsto(x,y,u,uv,yu)$ is an open embedding, equivariant
for the $\mathbb{G}_{a,S}$-action on $X_{1}$ associated with the
locally nilpotent $\mathcal{O}_{S}$-derivation $x\partial_{z_{1}}+(2yz_{1}+1)\partial_{z_{2}}+y^{2}\partial_{w}$
of $\pi_{*}\mathcal{O}_{X_{1}}$. The fiber $\pi_{1}^{-1}(o)$ is
isomorphic to $\mathbb{A}_{\kappa}^{2}=\mathrm{Spec}(\kappa[z_{2},w])$
on which the $\mathbb{G}_{a,S}$-action restricts to $\mathbb{G}_{a,\kappa}$-action
by translations associated to the derivation $\partial_{z_{2}}$ of
$\kappa[z_{2},w]$. It is straightforward to check that $X_{1}$ is
smooth along $\pi_{1}^{-1}(o)$. We claim that the geometric quotient
of the $\mathbb{G}_{a,S}$-action on $X_{1}$ is isomorphic to the
complement of a $\kappa$-rational point $o_{1}$ in the blow-up $\tau:\tilde{S}\rightarrow S$
of $o$. Such a surface being in particular separated, the $\mathbb{G}_{a,S}$-action
on $X_{1}$ is proper, implying that $j_{1}:P\hookrightarrow X_{1}$
is a quasi-projective extension of $P$ of Type II.

Indeed, let us identify $\tilde{S}$ with the closed subvariety of
$S\times_{k}\mathrm{Proj}(k[u_{0},u_{1}])$ with equation $xu_{1}-yu_{0}=0$
in such a way that $\tau$ coincides with the restriction of the first
projection. The morphism $f:X_{1}\rightarrow\tilde{S}$ defined by
\[
(x,y,z,u,v)\mapsto((x,y),[x:y])=((x,y),[yz_{1}+1:w])
\]
is $\mathbb{G}_{a}$-invariant and maps $\pi_{1}^{-1}(o)$ dominantly
onto the exceptional divisor $E\simeq\mathrm{pr}_{S}^{-1}(o)\simeq\mathrm{Proj}(\kappa[u_{0},u_{1}])$
of $\tau$. The induced morphism
\[
f|_{\pi^{-1}(o)}:\pi^{-1}(o)=\mathrm{Spec}(\kappa[z_{2},w])\rightarrow E,\quad(z_{2},w)\mapsto[1:w]
\]
factors as the composition of the geometric quotient $\pi_{1}^{-1}(o)\rightarrow\pi_{1}^{-1}(o)/\mathbb{G}_{a,\kappa}\simeq\mathrm{Spec}(\kappa[w])$
with the open immersion $\pi_{1}^{-1}(o)/\mathbb{G}_{a,\kappa}\hookrightarrow E$
of $\pi_{1}^{-1}(o)/\mathbb{G}_{a,\kappa}$ as the complement of the
$\kappa$-rational point $o_{1}=((0,0),[0:1])\in E$. On the other
hand, the composition 
\[
\tau\circ f\circ j_{1}:P\stackrel{\simeq}{\longrightarrow}X_{1}\setminus\pi_{1}^{-1}(o)\rightarrow\tilde{S}\setminus E\stackrel{\simeq}{\longrightarrow}S\setminus\{o\}
\]
coincides with the geometric quotient morphism $\rho:P\rightarrow S\setminus\{o\}$.
So $f:X_{1}\rightarrow\tilde{S}$ factors through a surjective morphism
$q:X_{1}\rightarrow\tilde{S}\setminus\{o_{1}\}$ whose fibers all
consist of precisely one $\mathbb{G}_{a}$-orbit. Since $q$ is a
smooth morphism, $q$ is a $\mathbb{G}_{a}$-torsor which implies
that $X_{1}/\mathbb{G}_{a}\simeq\tilde{S}\setminus\{o_{1}\}$. 
\end{example}

The scheme of the classification of quasi-projective extensions of
Type II of a given $\mathbb{G}_{a}$-torsor $\rho:P\rightarrow S_{*}$
which we give below is as follows: we first construct in \S \ref{subsec:ExtensionFamilies}
families of such extensions, in the form of $\mathbb{G}_{a}$-torsors
$q:X\rightarrow S'$ over quasi-projective $S$-schemes $\tau:S'\rightarrow S$
such that $\tau^{-1}(o)_{\mathrm{red}}$ is isomorphic to $\mathbb{A}_{\kappa}^{1}$,
$S'$ is smooth along $\tau^{-1}(o)$, and $\tau:S'\setminus\tau^{-1}(o)\rightarrow S_{*}$
is an isomorphism. We then show in $\S$ \ref{subsec:Classification}
that for quasi-projective $\mathbb{G}_{a}$-extension $\pi:X\rightarrow S$
of Type II of a given $\mathbb{G}_{a}$-torsor $\rho:P\rightarrow S_{*}$,
the structure morphism $\pi:X\rightarrow S$ factors through a $\mathbb{G}_{a}$-torsor
$q:X\rightarrow S'$ over one of these $S$-schemes $S'$. In the
last subsection, we focus on the special case where $\pi:X\rightarrow S$
has the stronger property of being an affine morphism. 

\subsection{\label{subsec:ExtensionFamilies}A family of $\mathbb{G}_{a}$-extensions
over quasi-projective $S$-schemes}

\indent\newline\indent Let again $(S,o)$ be a pair consisting of
a surface and a closed point $o$ contained in the smooth locus of
$S$, with residue field $\kappa$. We let $\overline{\tau}_{1}:\overline{S}_{1}\rightarrow S$
be the blow-up of $o$, with exceptional divisor $\overline{E}_{1}\simeq\mathbb{P}_{\kappa}^{1}$.
Then for every $n\geq2$, we let $\overline{\tau}_{n,1}:\overline{S}_{n}=\overline{S}_{n}(o_{1},\ldots,o_{n-1})\rightarrow\overline{S}_{1}$
be the scheme obtained from $\overline{S}_{1}$ by performing the
following sequence of blow-ups of $\kappa$-rational points: 

a) The first step $\overline{\tau}_{21}:\overline{S}_{2}(o_{1})\rightarrow\overline{S}_{1}$
is the blow-up of a $\kappa$-rational point $o_{1}\in\overline{E}_{1}$
with exceptional divisor $\overline{E}_{2}\simeq\mathbb{P}_{\kappa}^{1}$, 

b) Then for every $2\leq i\leq n-2$, we let $\overline{\tau}_{i+1,i}:\overline{S}_{i+1}(o_{1},\ldots,o_{i})\rightarrow\overline{S}_{k}(o_{1},\ldots,o_{i-1})$
be the blow-up of a $\kappa$-rational point $o_{i}\in\overline{E}_{i}$,
with exceptional divisor $\overline{E}_{i+1}\simeq\mathbb{P}_{\kappa}^{1}$. 

c) Finally, we let $\overline{\tau}_{n,n-1}:\overline{S}_{n}(o_{1},\ldots,o_{n-1})\rightarrow\overline{S}_{n-1}(o_{1},\ldots,o_{n-2})$
be the blow-up of a $\kappa$-rational point $o_{n-1}\in\overline{E}_{n-1}$
which is a smooth point of the reduced total transform of $\overline{E}_{1}$
by $\overline{\tau}_{1}\circ\cdots\circ\overline{\tau}_{n-1,n-2}$. 

We let $\overline{E}_{n}\simeq\mathbb{P}_{\kappa}^{1}$ be the exceptional
divisor of $\overline{\tau}_{n,n-1}$ and we let 
\[
\overline{\tau}_{n,1}=\overline{\tau}_{2,1}\circ\cdots\circ\overline{\tau}_{n,n-1}:\overline{S}_{n}(o_{1},\ldots,o_{n-1})\rightarrow\overline{S}_{1}.
\]
The inverse image of $o$ in $\overline{S}_{n}(o_{1},\ldots,o_{n-1})$
by $\overline{\tau}_{1}\circ\overline{\tau}_{n,1}$ is a tree of $\kappa$-rational
curves in which $\overline{E}_{n}$ intersects the reduced proper
transform of $\overline{E}_{1}\cup\cdots\cup\overline{E}_{n-1}$ in
$\overline{S}_{n}(o_{1},\ldots,o_{n-1})$ transversally in a unique
$\kappa$-rational point. 

\begin{figure}[ht]
\centering
\begin{tikzpicture}
\draw (0,0) -- (1,0);
\draw (0,0) node[above] {$\overline{E}_1$};
\draw (0,0) node[below] {{\tiny $-1$}};
\draw (0.7,0) node {$\bullet$};
\draw (0.7,0) node[below] {$o_1$};

\draw (2,0) -- (3,0);
\draw (2,0) node[above] {$\overline{E}_1$};
\draw (2,0) node[below] {{\tiny $-2$}};
\draw (2.7,0.1) -- (3.3,-0.7) ;
\draw (3,-0.31) node {$\bullet$}; 
\draw (3,-0.4) node[below] {$o_2$};
\draw (3.4,-0.4) node {{\tiny $-1$}};
\draw (3.5,-0.7) node[below] {$\overline{E}_2$};

\draw (4.5,0) -- (5.5,0);
\draw (4.5,0) node[above] {$\overline{E}_1$};
\draw (4.5,0) node[below] {{\tiny $-2$}};
\draw (5.2,0.1) -- (5.8,-0.7) ;
\draw (5.5,-0.31) node {$\bullet$}; 
\draw (5.5,-0.4) node[below] {$o_3$};
\draw (6,-0.6) node {{\tiny $-2$}};
\draw (6,-0.7) node[below] {$\overline{E}_2$};
\draw (5.3,-0.31) -- (6.3,-0.31);
\draw (6,-0.31) node[above] {{\tiny $-1$}};
\draw (6.6,-0.31) node {$\overline{E}_3$};

\draw (8,0) -- (9,0);
\draw (8,0) node[above] {$\overline{E}_1$};
\draw (8,0) node[below] {{\tiny $-2$}};
\draw (8.7,0.1) -- (9.3,-0.7) ;
\draw (9.5,-0.6) node {{\tiny $-3$}};
\draw (9.5,-0.7) node[below] {$\overline{E}_2$};
\draw (8.8,-0.31) -- (10.4,-0.31);
\draw (10,-0.6) node {{\tiny $-2$}};
\draw (10.3,-0.9) node[below] {$\overline{E}_3$};
\draw (9.5,-0.2) -- (10.1,-1);
\draw (9.3,-0.31) node {$\bullet$}; 
\draw (9.3,-0.31) node[above] {$o_4$};
\draw (10,-0.31) node[above] {{\tiny $-1$}};
\draw (10.8,-0.31) node {$\overline{E}_4$};

\draw (12.2,0) -- (13.2,0);
\draw (12.2,0) node[above] {$\overline{E}_1$};
\draw (12.2,0) node[below] {{\tiny $-2$}};
\draw (12.9,0.1) -- (13.5,-0.7) ;
\draw (13.7,-0.6) node {{\tiny $-3$}};
\draw (13.7,-0.7) node[below] {$\overline{E}_2$};
\draw (13,-0.31) -- (14.6,-0.31);
\draw (14.2,-0.6) node {{\tiny $-2$}};
\draw (14.5,-0.9) node[below] {$\overline{E}_3$};
\draw (13.7,-0.2) -- (14.3,-1);
\draw (14.2,-0.31) node[above] {{\tiny $-2$}};
\draw (15,-0.31) node {$\overline{E}_4$};
\draw (13.5,-0.4) -- (13.5,0.6);
\draw (13.5,0.62) node[above] {$\overline{E}_5$};
\draw (13.7,0.31) node {{\tiny $-1$}};

\draw (0.5, -2) node {$\overline{S}_1$};
\draw [->] (2,-2) -- (1,-2); \draw (1.5,-2) node[above] {$\overline{\tau}_{2,1}$};
\draw (2.7, -2) node {$\overline{S}_2(o_1)$};
\draw [->] (4.5,-2) -- (3.5,-2); \draw (4,-2) node[above] {$\overline{\tau}_{3,2}$};
\draw (5.6, -2) node {$\overline{S}_3(o_1,o_2)$};
\draw [->] (8,-2) -- (7,-2); \draw (7.5,-2) node[above] {$\overline{\tau}_{4,3}$};
\draw (5.6, -2) node {$\overline{S}_3(o_1,o_2)$};
\draw (9.4, -2) node {$\overline{S}_4(o_1,o_2,o_3)$};
\draw [->] (12,-2) -- (11,-2); \draw (11.5,-2) node[above] {$\overline{\tau}_{5,4}$};
\draw (13.6, -2) node {$\overline{S}_5(o_1,o_2,o_3,o_4)$};
\end{tikzpicture}
\caption{The successive total transforms of $\overline{E}_1$ in a possible construction of a surface of the form $\overline{S}_5(o_1,\ldots ,o_4)$ over  a $k$-rational point $o$. The integers indicate the self-intersections of the corresponding curves.} 
\label{fig:DualGraphTypeA} 
\end{figure}
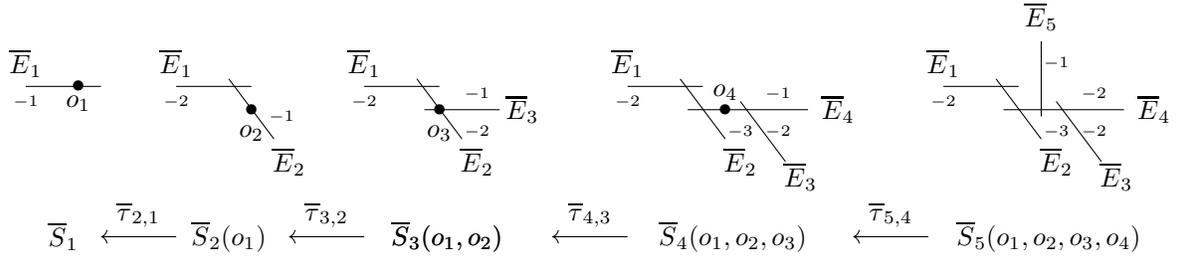
\begin{notation}
\label{nota:Base-scheme} For every $\kappa$-rational point $o_{1}\in\overline{E}_{1}$,
we let $S_{1}(o_{1})=\overline{S}_{1}\setminus\{o_{1}\}$, $E_{1}=\overline{E}_{1}\cap S_{1}\simeq\mathbb{A}_{\kappa}^{1}$
and we let $\tau_{1}:S_{1}(o_{1})\rightarrow S$ be the restriction
of $\overline{\tau}_{1}$. 

For $n\geq2$, we let $S_{n}(o_{1},\ldots,o_{n-1})=\overline{S}_{n}(o_{1},\ldots,o_{n-1})\setminus\overline{E}_{1}\cup\cdots\cup\overline{E}_{n-1}$
and $E_{n}=S_{n}(o_{1},\ldots,o_{n-1})\cap\overline{E}_{n}\simeq\mathbb{A}_{\kappa}^{1}$.
We denote by $\tau_{n,1}:S_{n}(o_{1},\ldots,o_{n-1})\rightarrow\overline{S}_{1}$
the birational morphism induced by $\overline{\tau}_{n,1}$ and we
let $\tau_{n}=\overline{\tau}_{1}\circ\tau_{n,1}:S_{n}(o_{1},\ldots,o_{n-1})\rightarrow S$. 
\end{notation}

The following lemma summarizes some basic properties of the so-constructed
$S$-schemes: 
\begin{lem}
\label{lem:blownup-surfaces-properties} For every $n\geq1$, the
following hold for $S_{n}=S_{n}(o_{1},\ldots,o_{n-1})$:

a) $\tau_{n}:S_{n}\rightarrow S$ is quasi-projective and restricts
to an isomorphism over $S_{*}$ while $\tau_{n}^{-1}(o)_{\mathrm{red}}=E_{n}$, 

b) $S_{n}$ is smooth along $\tau_{n}^{-1}(o)$

c) $\tau_{n}^{*}:\Gamma(S,\mathcal{O}_{S})\rightarrow\Gamma(S_{n},\mathcal{O}_{S_{n}})$
is an isomorphism.

\noindent Moreover for $n\geq2$, the morphism $\tau_{n,1}:S_{n}\rightarrow\overline{S}_{1}$
is affine. 
\end{lem}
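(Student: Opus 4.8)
The plan is to dispatch (a), (b), (c) by standard properties of blow-ups together with normality, and to reserve the bulk of the work for the affineness of $\tau_{n,1}$.

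\emph{Parts (a) and (b).} Since a blow-up of a noetherian scheme along a closed subscheme is projective and each $\overline{\tau}_{i+1,i}$ is the blow-up of a $\kappa$-rational point, the composite $\overline{\tau}_{n}=\overline{\tau}_{1}\circ\overline{\tau}_{n,1}\colon\overline{S}_{n}\to S$ is projective; restricting to the open subscheme $S_{n}\subset\overline{S}_{n}$ shows $\tau_{n}$ is quasi-projective. All the centers $o,o_{1},\dots,o_{n-1}$ lie over $o$, so $\overline{\tau}_{n}$ is an isomorphism over $S_{*}$; as $S_{n}$ and $\overline{S}_{n}$ coincide over $S_{*}$, the morphism $\tau_{n}$ restricts to an isomorphism there. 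The reduced total transform of $o$ in $\overline{S}_{n}$ is the tree $\overline{E}_{1}\cup\cdots\cup\overline{E}_{n}$, and by construction $\overline{E}_{n}$ meets the rest transversally in a single $\kappa$-rational point; removing $\overline{E}_{1}\cup\cdots\cup\overline{E}_{n-1}$ (for $n=1$, removing the point $o_{1}$) therefore leaves exactly $E_{n}\simeq\mathbb{A}^{1}_{\kappa}$, giving $\tau_{n}^{-1}(o)_{\mathrm{red}}=E_{n}$. For (b), $S$ is smooth at $o$ and the blow-up of a smooth surface at a $\kappa$-rational point is again smooth along the new exceptional divisor; iterating, $\overline{S}_{n}$ is smooth along $\overline{\tau}_{n}^{-1}(o)$, hence $S_{n}$ is smooth along $E_{n}$.

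\emph{Part (c).} This I would deduce from (a) and normality, independently of any affineness. Injectivity of $\tau_{n}^{*}$ is clear since $\tau_{n}$ is dominant and $S_{n}$ is integral. For surjectivity, let $f\in\Gamma(S_{n},\mathcal{O}_{S_{n}})$ and set $U=S_{n}\setminus E_{n}=S_{n}\setminus\tau_{n}^{-1}(o)$. By (a), $\tau_{n}|_{U}\colon U\xrightarrow{\sim}S_{*}$, so $f|_{U}$ corresponds to some $g\in\Gamma(S_{*},\mathcal{O}_{S_{*}})$. Since $S$ is normal of dimension two and $o$ is a closed point of codimension two, $\Gamma(S_{*},\mathcal{O}_{S_{*}})=\Gamma(S,\mathcal{O}_{S})$, so $g$ extends to $\tilde{g}\in\Gamma(S,\mathcal{O}_{S})$. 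Then $\tau_{n}^{*}\tilde{g}$ and $f$ agree on the dense open $U$ of the integral scheme $S_{n}$, hence coincide; thus $f\in\tau_{n}^{*}\Gamma(S,\mathcal{O}_{S})$.

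\emph{Affineness of $\tau_{n,1}$ for $n\ge2$, the heart of the matter.} Affineness is local on the target $\overline{S}_{1}$, so I would cover $\overline{S}_{1}$ by $\overline{S}_{1}\setminus\{o_{1}\}$ and an affine neighborhood $V$ of $o_{1}$. Over $\overline{S}_{1}\setminus\{o_{1}\}$ the morphism $\tau_{n,1}$ is the open immersion $\overline{S}_{1}\setminus\overline{E}_{1}\hookrightarrow\overline{S}_{1}\setminus\{o_{1}\}$, which is affine as the complement of the Cartier divisor $\overline{E}_{1}$. Over $V$ all centers are infinitely near $o_{1}$, and I would invoke the relative Nakai--Moishezon criterion: $\tau_{n,1}$ is affine as soon as $S_{n}=\overline{S}_{n}\setminus\mathrm{Supp}(D')$ for some effective $\overline{\tau}_{n,1}$-ample Cartier divisor $D'$ with $\mathrm{Supp}(D')=\overline{E}_{1}\cup\cdots\cup\overline{E}_{n-1}$, since the complement of the support of a relatively ample effective divisor is affine over the base. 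Writing $D'=\sum_{i=1}^{n-1}a_{i}\overline{E}_{i}$ with $a_{i}>0$, relative ampleness amounts to $D'\cdot\overline{E}_{j}>0$ for every curve $\overline{E}_{j}$ $(2\le j\le n)$ contracted by $\overline{\tau}_{n,1}$. Condition (c) makes $\overline{E}_{n}$ a leaf meeting the removed part only along $\overline{E}_{n-1}$, so $D'\cdot\overline{E}_{n}=a_{n-1}>0$ automatically, while the proper transform $\overline{E}_{1}$, which is \emph{not} contracted, supplies the positivity needed at the nodes adjacent to it.

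\emph{The main obstacle.} The crux is the existence of strictly positive weights $a_{1},\dots,a_{n-1}$ solving $D'\cdot\overline{E}_{j}>0$ for all $2\le j\le n$. Because the intersection form on the connected exceptional configuration $\overline{E}_{1},\dots,\overline{E}_{n}$ over $o\in S$ is negative definite, no effective relatively ample divisor is supported on contracted curves alone; the construction genuinely exploits that $\overline{E}_{1}$ is dropped from the list of curves against which positivity is tested while being kept in $D'$. On the dual tree the inequalities read $\sum_{i\sim j}a_{i}>|\overline{E}_{j}^{2}|\,a_{j}$ (with the convention $a_{n}:=0$), a discrete strict superharmonicity that I would meet by letting the $a_{i}$ decay suitably, say geometrically, away from $\overline{E}_{1}$ toward the leaf $\overline{E}_{n}$; checking that one choice satisfies every node inequality uniformly over the tree shapes allowed by the construction is the step demanding the most care. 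As a sanity check, in the chain case the system reduces to strict convexity of $(a_{i})$ together with $a_{n-2}>2a_{n-1}$, solved for instance by $a_{i}=3^{\,n-1-i}$.
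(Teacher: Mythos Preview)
Your proposal is correct and follows essentially the same route as the paper: parts (a)--(c) are dispatched as straightforward consequences of the construction (your use of normality for (c) makes explicit what the paper leaves implicit), and for the affineness of $\tau_{n,1}$ both you and the paper construct an effective $\mathbb{Q}$-divisor $D'=\sum_{i=1}^{n-1}a_i\overline{E}_i$ that is $\overline{\tau}_{n,1}$-ample, with coefficients decreasing fast enough along the dual tree away from $\overline{E}_1$, and then use that the complement of its support is affine over $\overline{S}_1$. The only cosmetic difference is that the paper cites EGA III, Th\'eor\`eme 4.7.1 (ampleness on the exceptional fiber implies relative ampleness) in place of your direct appeal to relative Nakai--Moishezon, and does not bother with the separate local check over $\overline{S}_1\setminus\{o_1\}$, which is subsumed by the global argument.
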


\begin{proof}
The first three properties are straightforward consequences of the
construction. For the last one, let $D=\overline{E}_{1}+\sum_{i=2}^{n-1}a_{i}\overline{E}_{i}$
where $a_{i}$ is a sequence of positive rational numbers and let
$m\geq1$ be so that $mD$ is a Cartier divisor on $\overline{S}_{n}$.
Then a direct computation shows that the restriction of $\mathcal{O}_{\overline{S}_{n}}(mD)$
to $\overline{\tau}_{n,1}^{-1}(o_{1})_{\mathrm{red}}=\bigcup_{i=2}^{n}\overline{E}_{i}$
is an ample invertible sheaf provided that the sequence $(a_{i})_{i=2,\ldots,n-1}$
decreases rapidly enough with respect to the distance of $\overline{E}_{i}$
to $\overline{E}_{1}$ in the dual graph of $\overline{E}_{1}\cup\cdots\cup\overline{E}_{n-1}$.
Since $\overline{\tau}_{n,1}$ restricts to an isomorphism over $\overline{S}_{1}\setminus\{o_{1}\}$,
it follows from \cite[Th\'eor\`eme 4.7.1]{EGA3} that $\mathcal{O}_{\overline{S}_{n}}(mD)$
is $\overline{\tau}_{n,1}$-ample on $\overline{S}_{n}$. Since by
definition $\tau_{n,1}$ is the restriction of the projective morphism
$\overline{\tau}_{n,1}:\overline{S}_{n}\rightarrow\overline{S}_{1}$
to $S_{n}=\overline{S}_{n}\setminus\overline{E}_{1}\cup\cdots\cup\overline{E}_{n-1}=\overline{S}_{n}\setminus\mathrm{Supp}(D)$,
we conclude that $\tau_{n,1}$ is an affine morphism. 
\end{proof}
\begin{rem}
By construction, $\tau_{1}^{-1}(o)=E_{1}$ in $S_{1}(o_{1})$, but
for $n\geq2$, we have $\tau_{n}^{-1}(o)=mE_{n}$ for some integer
$m\geq1$ which depends on the sequence of $\kappa$-rational points
$o_{1},\ldots,o_{n-1}$ blown-up to construct $S_{n}(o_{1},\ldots,o_{n-1})$.
For instance, it is straightforward to check that $m=1$ if and only
if for every $i\geq1$, $o_{i}\in\overline{E}_{i}$ is a smooth point
of the reduced total transform of $\overline{E}_{1}$ in $\overline{S}_{i}(o_{1},\ldots,o_{i-1})$. 
\end{rem}

The structure morphism of a $\mathbb{G}_{a}$-torsor being affine,
hence quasi-projective, the total space of any $\mathbb{G}_{a}$-torsor
$q:X\rightarrow S_{n}$ over an $S$-scheme $\tau_{n}:S_{n}=S_{n}(o_{1},\ldots,o_{n})\rightarrow S$
is a quasi-projective $S$-scheme $\pi=\tau_{n}\circ q:X\rightarrow S$
equipped with a proper $\mathbb{G}_{a,S}$-action. Furthermore $\pi^{-1}(o)_{\mathrm{red}}=q^{-1}(E_{n})\simeq E_{n}\times\mathbb{A}_{\kappa}^{1}\simeq\mathbb{A}_{\kappa}^{2}$
and $X$ is smooth along $\pi^{-1}(o)$ as $S_{n}$ is smooth along
$E_{n}$. On the other hand, $\pi:X\rightarrow S$ is by construction
a $\mathbb{G}_{a}$-extension of its restriction $\rho:P\rightarrow S_{n}\setminus E_{n}\simeq S_{*}$
over $S_{n}\setminus E_{n}$, hence is a quasi-projective $\mathbb{G}_{a}$-extension
of $P$ of Type II. The following proposition shows conversely that
every $\mathbb{G}_{a}$-torsor $\rho:P\rightarrow S_{*}$ admits a
quasi-projective $\mathbb{G}_{a}$-extension of Type II  into a $\mathbb{G}_{a}$-torsor
$q:X\rightarrow S_{n}$.
\begin{prop}
\label{prop:Extension-Existence} Let $\rho:P\rightarrow S_{*}$ be
a $\mathbb{G}_{a}$-torsor. Then for every $n\geq1$ and every $S$-scheme
$\tau_{n}:S_{n}(o_{1},\ldots,o_{n-1})\rightarrow S$ as in Notation
\ref{nota:Base-scheme} there exist a $\mathbb{G}_{a}$-torsor $q:X\rightarrow S_{n}(o_{1},\ldots,o_{n-1})$
and an equivariant open embedding $j:P\hookrightarrow X$ such that
in the following diagram \[\begin{tikzcd} P \arrow[hook,r,"j"] \arrow[d,"\rho"'] & X \arrow[d,"q"] \\ S_n(o_1,\ldots , o_{n-1})\setminus E_n \arrow[hook,r] \arrow[d,"\tau_n"',"\wr"] & S_n(o_1,\ldots , o_{n-1}) \arrow[d,"\tau_n"] \\ S_* \arrow[hook,r] & S \end{tikzcd}\]all
squares are cartesian. In particular, $j:P\hookrightarrow X$ is a
quasi-projective $\mathbb{G}_{a}$-extension of $P$ of Type II. 
\[
\]
\end{prop}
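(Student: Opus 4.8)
The plan is to reduce the statement to a cohomological lifting problem and to solve it by the same colimit technique used in the proof of Proposition~\ref{prop:Blowup-Extension}, the crucial difference being that here the relevant transition homomorphisms are surjective rather than injective. By Propositions~\ref{prop:A1bundle-torsor} and~\ref{prop:Torsor-Ga-action}, a $\mathbb{G}_a$-torsor over $S_n=S_n(o_1,\ldots,o_{n-1})$ is determined up to equivariant isomorphism by its class in $H^1(S_n,\mathcal{O}_{S_n})$, and $\rho:P\rightarrow S_*$ is determined by a class $g\in H^1(S_*,\mathcal{O}_{S_*})$. Since $\tau_n$ restricts to an isomorphism $S_n\setminus E_n\stackrel{\simeq}{\rightarrow}S_*$ by Lemma~\ref{lem:blownup-surfaces-properties}~a), I identify $g$ with a class in $H^1(S_n\setminus E_n,\mathcal{O}_{S_n\setminus E_n})$. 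Constructing the desired extension then amounts to producing a class $\tilde{g}\in H^1(S_n,\mathcal{O}_{S_n})$ whose image under the restriction homomorphism $H^1(S_n,\mathcal{O}_{S_n})\rightarrow H^1(S_n\setminus E_n,\mathcal{O}_{S_n\setminus E_n})$ equals $g$; the associated $\mathbb{G}_a$-torsor $q:X\rightarrow S_n$ then restricts over $S_n\setminus E_n$ to a torsor of class $g$, hence equivariantly isomorphic to $P$, which furnishes the open embedding $j$ and makes the squares cartesian. So the whole content of the proof is the surjectivity of this restriction map.

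To establish surjectivity, I first observe that $E_n$ is an effective Cartier divisor on $S_n$: it is supported where $S_n$ is smooth by Lemma~\ref{lem:blownup-surfaces-properties}~b), hence locally principal there, and is empty elsewhere. Consequently the open inclusion $i:S_n\setminus E_n\hookrightarrow S_n$ is affine, and exactly as in the proof of Proposition~\ref{prop:Blowup-Extension} one has $i_*\mathcal{O}_{S_n\setminus E_n}\simeq\mathrm{colim}_{m\geq0}\mathcal{O}_{S_n}(mE_n)$ by \cite[Th\'eor\`eme 9.3.1]{EGA1}, and the canonical homomorphism
\[
\mathrm{colim}_{m\geq0}H^1(S_n,\mathcal{O}_{S_n}(mE_n))\rightarrow H^1(S_n\setminus E_n,\mathcal{O}_{S_n\setminus E_n})
\]
is an isomorphism by \cite[Theorem 8]{Ke80} and \cite[Corollaire 1.3.3]{EGA3}, using that $S_n$ is noetherian. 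Under this identification the restriction map is the composite of the canonical map $H^1(S_n,\mathcal{O}_{S_n})\rightarrow\mathrm{colim}_{m\geq0}H^1(S_n,\mathcal{O}_{S_n}(mE_n))$ with this isomorphism.

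The key point, and the place where the geometry of Type II enters, is that $E_n\simeq\mathbb{A}^1_\kappa$ is \emph{affine}, in contrast to the exceptional $\mathbb{P}^1_\kappa$ occurring in Type I. For each $m\geq0$ the short exact sequence
\[
0\rightarrow\mathcal{O}_{S_n}(mE_n)\rightarrow\mathcal{O}_{S_n}((m+1)E_n)\rightarrow\mathcal{O}_{S_n}((m+1)E_n)|_{E_n}\rightarrow0
\]
yields a long exact cohomology sequence in which the term following $H^1(S_n,\mathcal{O}_{S_n}((m+1)E_n))$ is $H^1(E_n,\mathcal{O}_{S_n}((m+1)E_n)|_{E_n})$, which vanishes because $E_n$ is affine. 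Hence every transition homomorphism $H^1(S_n,\mathcal{O}_{S_n}(mE_n))\rightarrow H^1(S_n,\mathcal{O}_{S_n}((m+1)E_n))$ is surjective. A directed colimit of surjections has the feature that every class is already represented at the initial stage: pulling a representative back one step at a time along the surjective transition maps exhibits it as the image of an element of $H^1(S_n,\mathcal{O}_{S_n})$. Therefore the restriction map $H^1(S_n,\mathcal{O}_{S_n})\rightarrow H^1(S_n\setminus E_n,\mathcal{O}_{S_n\setminus E_n})$ is surjective and $g$ admits a lift $\tilde{g}$.

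Finally I would let $q:X\rightarrow S_n$ be a $\mathbb{G}_a$-torsor of class $\tilde{g}$, equipped with the $\mathbb{G}_a$-action of Proposition~\ref{prop:Torsor-Ga-action}. Its restriction over $S_n\setminus E_n$ is a $\mathbb{G}_a$-torsor of class $g$, hence equivariantly isomorphic to $P$ via the identification $S_n\setminus E_n\simeq S_*$; this isomorphism realizes the equivariant open embedding $j:P\hookrightarrow X$ and shows that the top square, and trivially the bottom one, are cartesian. That $\pi=\tau_n\circ q:X\rightarrow S$ is then a quasi-projective $\mathbb{G}_a$-extension of $P$ of Type II was already recorded in the discussion preceding the statement, since $X$ is a $\mathbb{G}_a$-torsor over the quasi-projective $S$-scheme $S_n$, smooth along $E_n$, with $\pi^{-1}(o)_{\mathrm{red}}\simeq E_n\times\mathbb{A}^1_\kappa\simeq\mathbb{A}^2_\kappa$. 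The only genuine difficulty is the surjectivity argument above; everything else is formal, and that surjectivity, once one observes the affineness of $E_n$, is a direct transposition of the Type I computation with the direction of the transition maps reversed.
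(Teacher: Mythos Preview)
Your proof is correct and takes a genuinely different route from the paper. Both proofs reduce to showing that the restriction map $H^1(S_n,\mathcal{O}_{S_n})\rightarrow H^1(S_n\setminus E_n,\mathcal{O}_{S_n\setminus E_n})$ is surjective, but they establish this differently. The paper argues via Mayer--Vietoris: it constructs an explicit affine open neighborhood $U$ of $E_n$ (shrinking $S$ so that $o$ is a complete intersection, choosing a curve through $o_1$, and for $n\geq2$ invoking the affineness of $\tau_{n,1}$ from Lemma~\ref{lem:blownup-surfaces-properties}), checks that $U\setminus E_n$ is affine, and reads surjectivity off from the vanishing of $H^1(U\setminus E_n,\mathcal{O})$. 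Your argument instead transposes the colimit machinery of Proposition~\ref{prop:Blowup-Extension}, observing that because $E_n\simeq\mathbb{A}^1_\kappa$ is affine the transition maps $H^1(S_n,\mathcal{O}_{S_n}(mE_n))\rightarrow H^1(S_n,\mathcal{O}_{S_n}((m+1)E_n))$ are all surjective, so every class in the colimit is hit from stage zero. Your approach is more uniform (no case split $n=1$ versus $n\geq2$, no explicit chart construction) and highlights the structural contrast with Type~I more cleanly; the paper's approach is more hands-on and avoids the colimit formalism. Both are short and either would be acceptable.
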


\begin{proof}
Letting $S_{n}=S_{n}(o_{1},\ldots,o_{n})$, we have to prove that
every $\mathbb{G}_{a}$-torsor $\rho:P\rightarrow S_{n}\setminus E_{n}\simeq S_{*}$
is the restriction of a $\mathbb{G}_{a}$-torsor $q:X\rightarrow S_{n}$,
or equivalently that the restriction homomorphism $H^{1}(S_{n},\mathcal{O}_{S_{n}})\rightarrow H^{1}(S_{n}\setminus E_{n},\mathcal{O}_{S_{n}\setminus E_{n}})$
is surjective. It is enough to show that there exists a Zariski open
neighborhood $U$ of $E_{n}$ in $S_{n}$ and a $\mathbb{G}_{a}$-torsor
$q:Y\rightarrow U$ such that $Y\mid_{U\setminus E_{n}}\simeq P\mid_{U\setminus E_{n}}$.
Indeed, if so then a $\mathbb{G}_{a}$-torsor $q:X\rightarrow S_{n}$
with the desired property is obtained by gluing $P$ and $Y$ over
$U\setminus E_{n}$ by the isomorphism $Y\mid_{U\setminus E_{n}}\simeq P\mid_{U\setminus E_{n}}$.
In particular, we can replace $S_{n}$ by the inverse image by $\tau_{n}:S_{n}\rightarrow S$
of any Zariski open neighborhood of $o$ in $S$. We can thus assume
from the very beginning that $S=\mathrm{Spec}(A)$ is affine and that
$o=V(f,g)$ is a scheme-theoretic intersection for some $f,g\in A$.
Up to replacing $f$ and $g$ by other generators of the maximal ideal
of $o$ in $A$, we can assume that the proper transform $L_{1}$
in $\overline{\tau}_{1}:\overline{S}_{1}\rightarrow S$ of the curve
$L=V(f)\subset S$ intersects $\overline{E}_{1}$ in $o_{1}$. We
denote by $M_{1}\subset\overline{S}_{1}$ the proper transform of
the curve $M=V(g)\subset S$. 

We first treat the case $n=1$. The open subset $U_{1}=\overline{S}_{1}\setminus L$
of $\overline{S}_{1}$ is then affine and contained in $S_{1}$. Furthermore
$U_{1}\setminus E_{1}=\overline{S}_{1}\setminus\overline{\tau}_{1}^{-1}(L)\simeq S\setminus L$
is also affine. The Mayer-Vietoris long exact sequence of cohomology
of $\mathcal{O}_{S_{1}}$ for the open covering of $S_{1}$ by $S_{1}\setminus E_{1}$
and $U_{1}$ then reads 
\begin{align*}
0\rightarrow & H^{0}(S_{1},\mathcal{O}_{S_{1}})\rightarrow H^{0}(U_{1},\mathcal{O}_{S_{1}})\oplus H^{0}(S_{1}\setminus E_{1},\mathcal{O}_{S_{1}})\rightarrow H^{0}(U_{1}\setminus E_{1},\mathcal{O}_{S_{n}})\rightarrow\cdots\\
\cdots\rightarrow & H^{1}(S_{1},\mathcal{O}_{S_{1}})\rightarrow H^{1}(U_{1},\mathcal{O}_{U_{1}})\oplus H^{1}(S_{1}\setminus E_{1},\mathcal{O}_{S_{1}})\rightarrow H^{1}(U_{1}\setminus E_{1},\mathcal{O}_{U_{1}})\rightarrow\cdots.
\end{align*}
Since $U_{1}\setminus E_{1}$ is affine, $H^{1}(U_{1}\setminus E_{1},\mathcal{O}_{U_{1}})=0$
and so, the homomorphism $H^{1}(S_{1},\mathcal{O}_{S_{1}})\rightarrow H^{1}(S_{1}\setminus E_{1},\mathcal{O}_{S_{1}})$
is surjective as desired. 

In the case where $n\geq2$, the open subset $V_{1}=\overline{S}_{1}\setminus M_{1}$
of $\overline{S}_{1}$ is affine and it contains $o_{1}$ since $M_{1}$
intersects $\overline{E}_{1}$ in a point distinct from $o_{1}$.
Since $\tau_{n,1}:S_{n}\rightarrow\overline{S}_{1}$ is an affine
morphism by Lemma \ref{lem:blownup-surfaces-properties}, $U_{n}=\tau_{n,1}^{-1}(U_{1})$
is an affine open neighborhood of $E_{n}$ in $S_{n}$. By construction,
$S_{n}$ is then covered by the two open subset $U_{n}$ and $S_{n}\setminus E_{n}$
which intersect along the affine open subset $U_{n}\cap S_{n}\setminus E_{n}=U_{n}\setminus E_{n}=\tau_{n,1}^{-1}(\overline{S}_{1}\setminus\overline{\tau}_{1}^{-1}(M))$
of $S_{n}$. The conclusion then follows from the Mayer-Vietoris long
exact sequence of cohomology of $\mathcal{O}_{S_{n}}$ for the open
covering of $S_{n}$ by $S_{n}\setminus E_{n}$ and $U_{n}$. 
\end{proof}

\subsection{\label{subsec:Classification}Classification }

\indent\newline\noindent The following theorem shows that every quasi-projective
$\mathbb{G}_{a}$-extension of Type II of a given $\mathbb{G}_{a}$-torsor
$\rho:P\rightarrow S_{*}$ is isomorphic to one of the schemes $q:X\rightarrow S_{n}$
constructed in $\S$ \ref{subsec:ExtensionFamilies}. 
\begin{thm}
\label{thm:Main-TheoremA2} Let $\rho:P\rightarrow S_{*}$ be a $\mathbb{G}_{a}$-torsor
and let \[\begin{tikzcd}    P \arrow[r,hook,"j"] \arrow[d,"\rho"'] & X \arrow[d,"\pi"] \\ S_* \arrow[r,hook] & S \end{tikzcd}\]
be a quasi-projective $\mathbb{G}_{a}$-extension of $P$ of Type
II. Then there exists an integer $n\geq1$ and a scheme $\tau_{n}:S_{n}(o_{1},\ldots,o_{n-1})\rightarrow S$
such that $X$ is a $\mathbb{G}_{a}$-torsor $q:X\rightarrow S_{n}(o_{1},\ldots,o_{n-1})\simeq X/\mathbb{G}_{a}$
and $\rho:P\rightarrow S_{*}$ coincides with the restriction of $q$
to $S_{n}(o_{1},\ldots,o_{n-1})\setminus E_{n}\simeq S_{*}$. 
\end{thm}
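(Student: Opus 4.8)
The plan is to realize the extension as a $\mathbb{G}_a$-torsor over its own geometric quotient and then to identify that quotient with one of the surfaces $S_n(o_1,\dots,o_{n-1})$ of \S\ref{subsec:ExtensionFamilies}.

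First I would exploit properness. A proper action of $\mathbb{G}_a$ in characteristic zero has trivial stabilizers: the stabilizer of a point is a closed subgroup scheme of $\mathbb{G}_a$ which is proper over its residue field, hence finite, hence trivial. Thus the action is free, and properness guarantees that the geometric quotient $S'=X/\mathbb{G}_a$ exists as a separated normal two-dimensional algebraic space essentially of finite type over $k$, with $q:X\to S'$ a $\mathbb{G}_a$-torsor. Using that $X$ is quasi-projective over $S$ and that $q$ is affine, I would then check that $S'$ is in fact a scheme, quasi-projective over $S$. The morphism $\pi$ factors as $\pi=\tau\circ q$ with $\tau:S'\to S$, and over $S_*$ the torsor $q$ restricts to $\rho:P\to S_*$, so $\tau$ is an isomorphism over $S_*$ and $S'\setminus q(\pi^{-1}(o))\simeq S_*$.

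Next I would analyze $S'$ over $o$. Since $q$ is a smooth torsor and $X$ is smooth along $\pi^{-1}(o)$, the surface $S'$ is smooth along $E_n:=q(\pi^{-1}(o)_{\mathrm{red}})$ and $\pi^{-1}(o)_{\mathrm{red}}=q^{-1}(E_n)$. The induced free $\mathbb{G}_a$-action on $\pi^{-1}(o)_{\mathrm{red}}\simeq\mathbb{A}^2_{\kappa}$ has, by the classical description of additive group actions on the affine plane over a field, invariant ring a polynomial ring in one variable; being fixed-point free it realizes $\mathbb{A}^2_{\kappa}$ as a $\mathbb{G}_a$-torsor over $E_n\simeq\mathbb{A}^1_{\kappa}$, so that $\tau^{-1}(o)_{\mathrm{red}}=E_n\simeq\mathbb{A}^1_{\kappa}$. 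At this point the statement is reduced to a purely birational classification: every quasi-projective $\tau:S'\to S$ that is an isomorphism over $S_*$ and is smooth along the single affine line $E_n=\tau^{-1}(o)_{\mathrm{red}}\simeq\mathbb{A}^1_{\kappa}$ is isomorphic over $S$ to some $S_n(o_1,\dots,o_{n-1})$ as in Notation \ref{nota:Base-scheme}; granting this, $q:X\to S'\simeq S_n(o_1,\dots,o_{n-1})$ is the desired torsor, and its restriction over $S_n\setminus E_n\simeq S_*$ is $\rho$ by construction.

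For the birational classification I would pass to a completion. Working locally I may assume $S$ affine with $o=V(x,y)$. Choosing a normal projective $\overline{S'}\to S$ containing $S'$ as a dense open subset and, after resolving, assuming $\overline{S'}$ smooth, Zariski's structure theorem factors the projective birational morphism $\overline{S'}\to S$ as a sequence of blow-ups of closed points lying over $o$, whose reduced total transform of $o$ is a tree of copies of $\mathbb{P}^1_{\kappa}$. The closure $\overline{E}_n$ of $E_n$ is one component of this tree, and the requirement $E_n=\overline{E}_n\setminus(\overline{S'}\setminus S')\simeq\mathbb{A}^1_{\kappa}$ forces $\overline{E}_n\simeq\mathbb{P}^1_{\kappa}$ to meet the rest of the fiber in exactly one $\kappa$-rational point. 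Reading off the successive centers of the divisorial valuation $v=\mathrm{ord}_{\overline{E}_n}$ produces precisely a chain of infinitely near points $o_1\in\overline{E}_1,\dots,o_{n-1}\in\overline{E}_{n-1}$ as in steps (a)--(c) of \S\ref{subsec:ExtensionFamilies}, the single-point incidence translating exactly into the condition that $o_{n-1}$ be a smooth point of the reduced total transform of $\overline{E}_1$. This identifies $S'$ with the complement of $\overline{E}_1\cup\dots\cup\overline{E}_{n-1}$ in the corresponding $\overline{S}_n$, i.e.\ with $S_n(o_1,\dots,o_{n-1})$: both are normal, separated, smooth along $E_n$, isomorphic over $S_*$, and share the same local ring at the generic point of $E_n$, namely the valuation ring of $v$, so the tautological birational map between them is an isomorphism of $S$-schemes.

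The main obstacle is this last classification step. Two points require care: verifying that the abstract geometric quotient $S'$ is genuinely a scheme, quasi-projective over $S$, rather than merely a separated algebraic space; and extracting the precise combinatorics of the blow-up sequence, that is, showing that retaining only the terminal component $\overline{E}_n$ together with the constraint $E_n\simeq\mathbb{A}^1_{\kappa}$ forces exactly the configuration of \S\ref{subsec:ExtensionFamilies}, and that the resulting identification $S'\simeq S_n(o_1,\dots,o_{n-1})$ is an isomorphism over $S$ and not merely a birational equivalence. By contrast, the reduction carried out in the first two paragraphs is comparatively formal.
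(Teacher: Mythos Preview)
Your reduction in the first two paragraphs matches the paper's proof essentially verbatim: both use properness to produce the geometric quotient $\delta:S'=X/\mathbb{G}_a\to S$ as a separated algebraic space, observe that $\delta$ is an isomorphism over $S_*$, that $\delta^{-1}(o)_{\mathrm{red}}\simeq\mathbb{A}^1_\kappa$, and that $S'$ is smooth along this curve.

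The two proofs diverge at the classification step. You propose to first establish that $S'$ is a scheme, then compactify to a smooth projective $\overline{S'}\to S$ and invoke Zariski's factorization into point blow-ups. The paper instead never leaves the category of algebraic spaces until the very last move: since $\pi^{-1}(o)$ and $\delta^{-1}(o)$ are Cartier, the universal property of blow-ups (valid for algebraic spaces) lifts $\delta$ to $\delta_1:S'\to\overline{S}_1$; if $\delta_1|_{\delta^{-1}(o)}$ is non-constant then $\delta_1$ is separated, quasi-finite and birational with normal target, so Zariski's Main Theorem for algebraic spaces makes it an open immersion and one reads off $S'\simeq S_1(o_1)$; if it is constant, its image is a $\kappa$-rational point $o_1$ (because $\mathbb{A}^1_\kappa$ has $\kappa$-points) and one iterates. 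The process terminates because each lift strictly increases the image of the curve $\delta^{-1}(o)$ in the tower of blow-ups, and the single-point incidence condition on $o_{n-1}$ falls out from the requirement that the complement of the open image be closed.

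What each approach buys: the paper's iterative lifting sidesteps exactly the obstacle you flag, namely showing a priori that $S'$ is a scheme; scheme-ness is obtained a posteriori as a consequence of the open immersion into $\overline{S}_n$. It also produces the $\kappa$-rationality of the centers $o_i$ for free, and termination is automatic. Your route is conceptually natural but front-loads the hardest point: proving that a $\mathbb{G}_a$-quotient of a quasi-projective scheme by a proper action is a scheme is not formal (the ``$q$ affine and $X$ a scheme'' heuristic does not suffice without already knowing $S'$ has an affine cover), and your compactification-plus-factorization argument cannot start until this is settled. If you want to complete your version, the cleanest fix is precisely the paper's trick: lift $\delta$ through successive blow-ups using only the algebraic-space universal property, and apply ZMT for algebraic spaces at the end.
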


\begin{proof}
Since the $\mathbb{G}_{a,S}$-action on $X$ is proper, the geometric
quotient $X/\mathbb{G}_{a,S}$ exists in the form of a separated algebraic
$S$-space $\delta:X/\mathbb{G}_{a,S}\rightarrow S$. Furthermore,
since by definition of an extension $\pi^{-1}(S_{*})\simeq P$, we
have $\pi^{-1}(S_{*})/\mathbb{G}_{a,S}\simeq P/\mathbb{G}_{a,S}\simeq S_{*}$
and so $\delta$ restricts to an isomorphism over $S_{*}$. On the
other hand, $\pi^{-1}(o)\simeq\mathbb{A}_{\kappa}^{2}$ is equipped
with the induced proper $\mathbb{G}_{a,\kappa}$-action, whose geometric
quotient $\mathbb{A}_{\kappa}^{2}/\mathbb{G}_{a,\kappa}$ is isomorphic
to $\mathbb{A}_{\kappa}^{1}$. It follows from the universal property
of geometric quotient that $\delta^{-1}(o)=\mathbb{A}_{\kappa}^{2}/\mathbb{G}_{a,\kappa}=\mathbb{A}_{\kappa}^{1}$. 

Since $X$ is smooth in a neighborhood of $\pi^{-1}(o)$, $X/\mathbb{G}_{a,S}$
is smooth in neighborhood of $\delta^{-1}(o)$. In particular, $\pi^{-1}(o)$
and $\delta^{-1}(o)$ are Cartier divisors on $X$ and $X/\mathbb{G}_{a,S}$
respectively. Let $\overline{\tau}_{1}:\overline{S}_{1}\rightarrow S$
be the blow-up of $o$. Then by the universal property of blow-ups
\cite[\href{http://stacks.math.columbia.edu/tag/085P} {Tag 085P}]{StackP},
the morphisms $\pi:X\rightarrow S$ and $\delta:X/\mathbb{G}_{a,S}\rightarrow S$
lift to morphisms $\pi_{1}:X\rightarrow\overline{S}_{1}$ and $\delta_{1}:X/\mathbb{G}_{a,S}\rightarrow\overline{S}_{1}$
respectively, and we have a commutative diagram \[\begin{tikzcd} X \arrow[r,"\pi_1"] \arrow[d] & \overline{S}_1 \arrow[d,"\overline{\tau}_1"]\\ X/\mathbb{G}_a \arrow[r,"\delta"] \arrow[ur,"\delta_1"] & S \end{tikzcd}\]Furthermore,
since $\delta:X/\mathbb{G}_{a,S}\rightarrow S$ and $\overline{\tau}_{1}:\overline{S}_{1}\rightarrow S$
are separated, it follows that $\delta_{1}:X/\mathbb{G}_{a,S}\rightarrow\overline{S}_{1}$
is separated. By construction, the image of $\pi^{-1}(o)_{\mathrm{red}}/\mathbb{G}_{a,\kappa}$
by $\delta_{1}$ is contained in $\overline{E}_{1}$. 

If $\delta_{1}$ is not constant on $\pi^{-1}(o)_{\mathrm{red}}/\mathbb{G}_{a,\kappa}$
then $\delta_{1}$ is a separated quasi-finite birational morphism.
Since $\overline{S}_{1}$ is normal, $\delta_{1}$ is thus an open
immersion by virtue of Zariski Main Theorem for algebraic spaces \cite[\href{http://stacks.math.columbia.edu/tag/05W7} {Tag 05W7}]{StackP}.
Since $\pi^{-1}(o)_{\mathrm{red}}/\mathbb{G}_{a,\kappa}\simeq\mathbb{A}_{\kappa}^{1}$,
the only possibility is that $\overline{S}_{1}\setminus\delta_{1}(X/\mathbb{G}_{a,S})$
consists of a unique $\kappa$-rational point $o_{1}\in\overline{E}_{1}$
and $\delta_{1}:X/\mathbb{G}_{a,S}\rightarrow S_{1}(o_{1})=\overline{S}_{1}\setminus\{o_{1}\}$
is an isomorphism. So $\pi_{1}:X\rightarrow S_{1}(o_{1})$ is $\mathbb{G}_{a}$-torsor
whose restriction to $S_{1}(o_{1})\setminus E_{1}\simeq S_{*}$ coincides
with $\rho:P\rightarrow S_{*}$. 

Otherwise, if $\delta_{1}$ is constant on $\pi^{-1}(o)_{\mathrm{red}}/\mathbb{G}_{a,\kappa}$,
then its image consists of a unique $\kappa$-rational point $o_{1}\in\overline{E}_{1}$.
The same argument as above implies that $\pi_{1}:X\rightarrow\overline{S}_{1}$
and $\delta_{1}:X/\mathbb{G}_{a,S}\rightarrow\overline{S}_{1}$ lift
to a $\mathbb{G}_{a,S}$-invariant morphism $\pi_{2}:X\rightarrow\overline{S}_{2}(o_{1})$
and a separated morphism $\delta_{2}:X/\mathbb{G}_{a,S}\rightarrow\overline{S}_{2}(o_{1})$
to the blow-up $\overline{\tau}_{2,1}:\overline{S}_{2}(o_{1})\rightarrow\overline{S}_{1}$
of $\overline{S}_{1}$ at $o_{1}$, with exceptional divisor $\overline{E}_{2}$.
If the restriction of $\delta_{2}$ to $\pi^{-1}(o)_{\mathrm{red}}/\mathbb{G}_{a,\kappa}$
is not constant then $\delta_{2}$ is an open immersion and the image
of $\pi^{-1}(o)_{\mathrm{red}}/\mathbb{G}_{a,\kappa}$ is an open
subset of $\overline{E}_{2}$ isomorphic to $\mathbb{A}_{\kappa}^{1}$.
The only possibility is that $\delta_{2}(\pi^{-1}(o)/\mathbb{G}_{a,\kappa})=\overline{E}_{2}\setminus\overline{E}_{1}$.
Indeed, otherwise $\overline{S}_{2}\setminus\delta_{2}(X/\mathbb{G}_{a,S})$
would consist of the disjoint union of a point in $\overline{E}_{2}\setminus(\overline{E}_{1}\cap\overline{E}_{2})$
and of the curve $\overline{E}_{1}\setminus(\overline{E}_{1}\cap\overline{E}_{2})$
which is not closed in $\overline{S}_{2}$, in contradiction to the
fact that $\delta_{2}$ is an open immersion. Summing up, $\delta_{2}:X/\mathbb{G}_{a,S}\rightarrow S_{2}(o_{1})=\overline{S}_{2}(o_{1})\setminus\overline{E}_{1}$
is an isomorphism mapping $\pi^{-1}(o)_{\mathrm{red}}/\mathbb{G}_{a,\kappa}$
isomorphically onto $E_{2}$. So $\pi_{2}:X\rightarrow S_{2}(o_{1})$
is $\mathbb{G}_{a}$-torsor whose restriction to $S_{2}(o_{1})\setminus E_{2}\simeq S_{*}$
coincides with $\rho:P\rightarrow S_{*}$. 

Otherwise, if $\delta_{2}$ is constant on $\pi^{-1}(o)_{\mathrm{red}}/\mathbb{G}_{a,\kappa}$,
then $\delta_{2}(\pi^{-1}(o)/\mathbb{G}_{a,\kappa})$ is a $\kappa$-rational
point $o_{2}\in\overline{E}_{2}$, and there exists a unique minimal
sequence of blow-ups $\overline{\tau}_{k+1,k}:\overline{S}_{k+1}(o_{1},\ldots,o_{k})\rightarrow\overline{S}_{k}(o_{1},\ldots,o_{k-1})$,
$k=2,\ldots,m-1$ of successive $\kappa$-rational points $o_{k}\in\overline{E}_{k}\subset\overline{S}_{k}(o_{1},\ldots,o_{k-1})$,
with exceptional divisor $\overline{E}_{k+1}\subset\overline{S}_{k+1}(o_{1},\ldots,o_{k})$
such that $\pi_{2}:X\rightarrow\overline{S}_{2}(o_{1})$ and $\delta_{2}:X/\mathbb{G}_{a,S}\rightarrow\overline{S}_{2}(o_{1})$
lift respectively to a $\mathbb{G}_{a,S}$-invariant morphism $\pi_{m}:X\rightarrow\overline{S}_{m}(o_{1},\ldots,o_{m-1})$
and a separated morphism $\delta_{m}:X/\mathbb{G}_{a,S}\rightarrow\overline{S}_{m}(o_{1},\ldots,o_{m-1})$
with the property that the restriction of $\delta_{m}$ to $\pi^{-1}(o)_{\mathrm{red}}/\mathbb{G}_{a,\kappa}$
is non constant. By Zariski Main Theorem \cite[\href{http://stacks.math.columbia.edu/tag/05W7}{Tag 05W7}]{StackP}
again, we conclude that $\delta_{m}$ is an open immersion, mapping
$\pi^{-1}(o)_{\mathrm{red}}/\mathbb{G}_{a,\kappa}\simeq\mathbb{A}_{\kappa}^{1}$
isomorphically onto an open subset of $\overline{E}_{m}\simeq\mathbb{P}_{\kappa}^{1}$.
As in the previous case, the image of $\pi^{-1}(o)_{\mathrm{red}}/\mathbb{G}_{a,\kappa}$
in $\overline{E}_{m}$ must be equal to the complement of the intersection
of $\overline{E}_{m}$ with the proper transform of $\overline{E}_{1}\cup\cdots\cup\overline{E}_{m-1}$
in $\overline{S}_{m}(o_{1},\ldots,o_{m-1})$ since otherwise $\overline{S}_{m}(o_{1},\ldots,o_{m-1})\setminus\delta_{m}(X/\mathbb{G}_{a,S})$
would not be closed in $\overline{S}_{m}(o_{1},\ldots,o_{m-1})$.
Since $\pi^{-1}(o)_{\mathrm{red}}/\mathbb{G}_{a,\kappa}\simeq\mathbb{A}_{\kappa}^{1}$,
it follows that $\overline{E}_{m}$ intersects the proper transform
of $\overline{E}_{1}\cup\cdots\cup\overline{E}_{m-1}$ in a unique
$\kappa$-rational point, implying in turn that $o_{m-1}\in\overline{E}_{m-1}$
is a smooth $\kappa$-rational point of the reduced total transform
$\overline{E}_{1}\cup\cdots\cup\overline{E}_{m-1}$ of $\overline{E}_{1}$
in $\overline{S}_{m-1}(o_{1},\ldots,o_{m-2})$. Summing up, 
\[
\delta_{m}:X/\mathbb{G}_{a,S}\rightarrow\overline{S}_{m}(o_{1},\ldots,o_{m-1})\setminus\overline{E}_{1}\cup\cdots\cup\overline{E}_{m-1}
\]
is an isomorphism with an $S$-scheme of the form $S_{m}(o_{1},\ldots,o_{m-1})$
as constructed in \S \ref{subsec:ExtensionFamilies}, mapping $\pi^{-1}(o)_{\mathrm{red}}/\mathbb{G}_{a,\kappa}$
isomorphically onto $E_{m}=S_{m}(o_{1},\ldots,o_{m-1})\cap\overline{E}_{m}$.
It follows in turn that $\pi_{m}:X\rightarrow S_{m}(o_{1},\ldots,o_{m-1})$
is a $\mathbb{G}_{a}$-torsor whose restriction to $S_{m}(o_{1},\ldots,o_{m-1})\setminus E_{m}\simeq S_{*}$
coincides with $\rho:P\rightarrow S_{*}$. This completes the proof. 
\end{proof}

\subsection{Affine $\mathbb{G}_{a}$-extensions of Type II}

In this subsection, given a $\mathbb{G}_{a}$-torsor $\rho:P\rightarrow S_{*}$
we consider the existence of quasi-projective $\mathbb{G}_{a}$-extensions
of Type II \[\begin{tikzcd}    P \arrow[r,hook,"j"] \arrow[d,"\rho"'] & X \arrow[d,"\pi"] \\ S_* \arrow[r,hook] & S \end{tikzcd}\]
with the additional for which $X$ is affine over $S$. As in the
case of extension to $\mathbb{A}^{1}$-bundles over the blow-up of
$o$ treated in $\S$ \ref{subsec:Ga-blowup-affine}, a necessary
condition for the existence of such extensions is that the restriction
of $P$ over every open neighborhood of the closed point $o$ in $S$
is nontrivial. Indeed, if there exists an affine open neighborhood
$U$ of $o$ over which $P$ is trivial, then $P\simeq U\setminus\{o\}\times\mathbb{A}_{k}^{1}$
is strictly quasi-affine, hence cannot be the complement of a Cartier
divisor $\pi^{-1}(o)$ is any affine $U$-scheme $X|_{U}$. The next
theorem shows that this condition is actually sufficient:
\begin{thm}
\label{thm:Affine-Ga-extensions-A2}Let $\rho:P\rightarrow S_{*}$
be a $\mathbb{G}_{a}$-torsor such that for every open neighborhood
$U$ of $o$ in $S$, the restriction $P\times_{S_{*}}U\rightarrow U\setminus\{o\}$
is non trivial. Then for every $n\geq1$ and every $S$-scheme $\tau_{n}:S_{n}(o_{1},\ldots,o_{n-1})\rightarrow S$
as in Notation \ref{nota:Base-scheme} there exists a quasi-projective
$\mathbb{G}_{a}$-extension of $P$ of Type II into the total space
of a $\mathbb{G}_{a}$-torsor $q:X\rightarrow S_{n}(o_{1},\ldots,o_{n-1})$
for which $\pi=\tau_{n}\circ q:X\rightarrow S$ is an affine morphism. 
\end{thm}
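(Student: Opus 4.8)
The plan is to realize the sought total space $X$ as a $\mathbb{G}_{a}$-equivariant affine modification of the affine Type I extension furnished by Theorem \ref{prop:Main-Ext-Prop}, so that the affineness of $\pi$ comes essentially for free while the blow-up tower defining $S_{n}$ is reproduced on the quotient side. Since whether $\pi=\tau_{n}\circ q$ is affine can be tested over an affine open neighborhood of $o$ (over $S_{*}$ it restricts to $\rho\colon P\to S_{*}$, which is affine, $P$ being affine over $S$), and since the whole construction is compatible with such a restriction, I would first reduce to the case $S=\mathrm{Spec}(A)$ with $o=V(f,g)$ a scheme-theoretic complete intersection, choosing the generators as in the proof of Proposition \ref{prop:Extension-Existence} so that the proper transform of $L=V(f)$ meets $\overline{E}_{1}$ at $o_{1}$. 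Under the standing hypothesis that $P$ is non trivial over every punctured neighborhood of $o$, Theorem \ref{prop:Main-Ext-Prop} provides the unique Type I extension $\theta\colon W\to\tilde{S}=\overline{S}_{1}$: it is an $M(\ell_{0})$-torsor with $\ell_{0}\geq2$, affine over $S$, carrying the $\mathbb{G}_{a}$-action induced by the canonical section $s_{\ell_{0}}$ of $\mathcal{O}_{\tilde{S}}(\ell_{0}E)$ vanishing along $\ell_{0}E$. This $W$ is a $\mathbb{G}_{a}$-torsor only over $\tilde{S}\setminus E\simeq S_{*}$, and the whole point is to trade the degeneracy of the action along $E$ for a prescribed blow-up tower on the quotient.

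I would then construct $q\colon X\to S_{n}$ from $W$ by a $\mathbb{G}_{a}$-equivariant affine modification in the generalized sense indicated in the introduction, modelled on the morphism $\eta\colon X_{1}\to X_{0}$ of Example \ref{exa:ProperExtension}, which realizes the case $n=1$, $\ell_{0}=2$. The center is supported over the total transform of $L$ and is chosen so as to encode the sequence of blow-ups at the prescribed points $o_{1},\ldots,o_{n-1}$ together with the removal of the proper transforms of $\overline{E}_{1},\ldots,\overline{E}_{n-1}$. The effect on the affine-linear bundle structure and on the action through each elementary step is controlled by Lemma \ref{lem:A1bundle-affmod} and Lemma \ref{lem:Torsor-affmod}: the former produces, from a rank-one affine-linear bundle and the closure of a rational section over a divisor $C$, a new such bundle over the complement of the indeterminacy locus; the latter identifies the twisted line bundle $\mathcal{M}\otimes\mathcal{O}(-C)$ and records the equivariance of the modification. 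Tracking the global section inducing the $\mathbb{G}_{a}$-action by means of Proposition \ref{prop:Torsor-Ga-action}, I would verify that the cumulative effect of the modification is to make this section nowhere vanishing over the resulting quotient, equivalently to make the action free, while transforming the quotient into $S_{n}$. Crucially, each modification morphism $X\to W$ is an affine morphism by construction, so since $W$ is affine over $S$ the composite $\pi\colon X\to S$ is affine as well; this affineness, absent from the mere existence statement of Proposition \ref{prop:Extension-Existence}, is exactly what the method secures.

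It then remains to identify the geometric quotient. Since the resulting $\mathbb{G}_{a}$-action on $X$ is free with affine, hence quasi-projective, quotient morphism, $q\colon X\to X/\mathbb{G}_{a}$ is a $\mathbb{G}_{a}$-torsor; I would check that $X/\mathbb{G}_{a}$ is precisely the surface $S_{n}(o_{1},\ldots,o_{n-1})$ of Notation \ref{nota:Base-scheme} by matching the centers of the modifications with the prescribed points $o_{i}$ and invoking Lemma \ref{lem:blownup-surfaces-properties}, and that the restriction of $q$ over $S_{n}\setminus E_{n}\simeq S_{*}$ recovers $\rho\colon P\to S_{*}$, the latter because the modification is an isomorphism away from $E$ and so does not alter the torsor class. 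Since $q$ is smooth and $S_{n}$ is smooth along $E_{n}$, $X$ is smooth along $\pi^{-1}(o)$, while $\pi^{-1}(o)_{\mathrm{red}}=q^{-1}(E_{n})\simeq E_{n}\times\mathbb{A}_{\kappa}^{1}\simeq\mathbb{A}_{\kappa}^{2}$, so $j\colon P\hookrightarrow X$ is a quasi-projective $\mathbb{G}_{a}$-extension of $P$ of Type II with $\pi$ affine, as desired. I expect the main obstacle to lie in the second paragraph: setting up the equivariant affine modification so that, for $n\geq2$ where $S_{n}$ is \emph{not} an open subset of $\tilde{S}$ and Lemma \ref{lem:A1bundle-affmod} must be applied over the successive blown-up bases rather than over $\tilde{S}$, it simultaneously reproduces the prescribed blow-up tower on the quotient and resolves the degeneracy of the action; and in the bookkeeping needed to guarantee that the geometric quotient is exactly the given $S_{n}(o_{1},\ldots,o_{n-1})$ and not merely some surface in the family of Theorem \ref{thm:Main-TheoremA2}.
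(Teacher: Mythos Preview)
Your overall strategy matches the paper's: start from the unique affine Type I extension $\theta\colon W\to\overline{S}_{1}$ furnished by Theorem \ref{prop:Main-Ext-Prop} (an $M(\ell_{0})$-torsor with $\ell_{0}\geq 2$) and build $X$ from it by $\mathbb{G}_{a}$-equivariant affine modifications, so that affineness over $S$ is inherited along a chain of affine morphisms. For $n=1$ your sketch is essentially the paper's argument: pick a section of $\theta|_{E_{1}}$ over $E_{1}=\overline{E}_{1}\setminus\{o_{1}\}$, observe that its image is already closed in $W|_{\overline{E}_{1}}$ (precisely because the latter is a \emph{non-trivial} $\mathcal{O}_{\mathbb{P}^{1}_{\kappa}}(-\ell_{0})$-torsor, so no global section can close it up), perform the affine modification with this center and divisor $\theta^{-1}(\overline{E}_{1})$, and read off from Lemmas \ref{lem:A1bundle-affmod} and \ref{lem:Torsor-affmod} that the result is an $M(\ell_{0}-1)$-torsor over $S_{1}(o_{1})$. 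Since $E_{1}\simeq\mathbb{A}^{1}_{\kappa}$ is now affine, one iterates with further sections over $E_{1}$ until the twist reaches $0$ and the bundle becomes a genuine $\mathbb{G}_{a}$-torsor over $S_{1}(o_{1})$.

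The gap is in your treatment of $n\geq 2$. You propose to choose a center in $W$ clever enough that the modification simultaneously manufactures the blow-up tower defining $S_{n}$ on the quotient side and resolves the degeneracy of the action. But Lemma \ref{lem:A1bundle-affmod} never produces new exceptional divisors on the base: it only deletes a closed subset $F$ from the \emph{existing} base. Hence no sequence of such modifications of $W\to\overline{S}_{1}$ can yield a bundle over $S_{n}$ for $n\geq 2$, since $S_{n}$ is not an open subscheme of $\overline{S}_{1}$. You correctly flag this as the main obstacle, and it is a genuine one that your plan, as written, does not overcome.

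The paper resolves it by inserting a single preliminary step rather than by refining the modification. For $n\geq 2$ one first forms the fiber product $W_{n}=W\times_{\overline{S}_{1}}S_{n}$ along $\tau_{n,1}\colon S_{n}\to\overline{S}_{1}$. By Lemma \ref{lem:blownup-surfaces-properties} the morphism $\tau_{n,1}$ is \emph{affine}, so $W_{n}$ remains affine over $S$; and since $\tau_{n,1}^{*}\mathcal{O}_{\overline{S}_{1}}(\ell_{0}\overline{E}_{1})\simeq\mathcal{O}_{S_{n}}(mE_{n})$ for some $m\geq 2$, the pullback $\theta_{n}\colon W_{n}\to S_{n}$ is an $M_{n}(m)$-torsor whose restriction to $S_{n}\setminus E_{n}\simeq S_{*}$ is still $P$. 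Now the target base $S_{n}$ is fixed once and for all, $E_{n}\simeq\mathbb{A}^{1}_{\kappa}$ is affine so sections over it always exist, and one runs $m$ successive affine modifications exactly as in the $n=1$ case to drop the twist to zero and obtain a $\mathbb{G}_{a}$-torsor $q\colon X\to S_{n}$. In short: do not try to produce the blow-up tower via the modification itself; pull back to $S_{n}$ first, and use the modifications only to decrease the twisting index.
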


The following example illustrates the strategy of the proof given
below, which consists in constructing such affine extensions $\pi:X\rightarrow S$
by performing a well-chosen equivariant affine modification of extensions
of $\rho:P\rightarrow S_{*}$ into locally trivial $\mathbb{A}^{1}$-bundles
$\theta:W(P)\rightarrow\tilde{S}$ over the blow-up $\tau:\tilde{S}\rightarrow S$
of the point $o$. 
\begin{example}
\label{exa:Standard-Equiv-Aff-Mod} Let again $X_{0}$ and $X_{1}$
be the $\mathbb{G}_{a}$-extensions of $\rho:P=\{xv-yu=1\}\rightarrow S\setminus\{o\}$
considered in Example \ref{subsec:Homogeneous-Ga-torsors} and \ref{exa:ProperExtension}.
Recall that $X_{0}$ and $X_{1}$ are the affine $S$-schemes in $\mathbb{A}_{S}^{3}$
defined respectively by the equations 
\[
X_{0}:\quad\begin{cases}
xr-yq & =0\\
yp-x(q-1) & =0\\
pr-q(q-1) & =0
\end{cases}\qquad\textrm{and}\qquad X_{1}:\quad\begin{cases}
xw-y(yz_{1}+1) & =0\\
xz_{2}-z_{1}(yz_{1}+1) & =0\\
z_{1}w-yz_{2} & =0
\end{cases}
\]
equipped with the $\mathbb{G}_{a,S}$-actions associated with the
locally nilpotent $\mathcal{O}_{S}$-derivations $\partial_{0}=x^{2}\partial_{p}+xy\partial_{q}+y^{2}\partial_{r}$
and $\partial_{1}=x\partial_{z_{1}}+(2yz_{1}+1)\partial_{z_{2}}+y^{2}\partial_{w}$
respectively. 

The morphism $\pi_{0}:X_{0}\rightarrow S$ factors through the structure
morphism $\theta:X_{0}\rightarrow\tilde{S}$ of a torsor under a line
bundle on the blow-up $\tau:\tilde{S}\rightarrow S$ of the origin,
with the property that the restriction of $X_{0}$ to exceptional
divisor $E=\mathbb{P}_{\kappa}^{1}$ of $\tau$ is a nontrivial torsor
under the total space of the line bundle $\mathcal{O}_{\mathbb{P}_{\kappa}^{1}}(-2)$.
The $\mathbb{G}_{a,S}$-action on $X_{0}$ restricts to the trivial
one on $X_{0}|_{E}=\pi_{0}^{-1}(o)$. More precisely, $\partial_{0}$
is a global section of the sheaf $\mathcal{T}_{X_{0}}\otimes\mathcal{O}_{X_{0}}(-2X_{0}|_{E})$
of vector fields on $X_{0}$ that vanish at order $2$ along $X_{0}|_{E}$.
One way to obtain from $X_{0}$ a $\mathbb{G}_{a}$-extension $\pi:X\rightarrow S$
of $\rho:P\rightarrow S\setminus\{o\}$ with fiber $\pi^{-1}(o)_{\mathrm{red}}$
isomorphic to $\mathbb{A}_{\kappa}^{2}$ and a fixed point free action
is thus to perform an equivariant affine modification which simultaneously
replaces $X_{0}|_{E}$ by a copy of $\mathbb{A}_{\kappa}^{2}$ and
decreases the ``fixed point order of $\partial_{0}$ along $X_{0}|_{E}$'',
typically a modification with divisor $D$ equal to $X_{0}|_{E}$
and whose center $Z\subset X_{0}|_{E}$ is supported by a curve isomorphic
to $\mathbb{A}_{\kappa}^{1}$ which is mapped isomorphically onto
its image by the restriction of $\theta$. The birational $S$-morphism
\[
\eta:X_{1}\rightarrow X_{0},\quad(x,y,z_{1},z_{2},w)\mapsto(x,y,xz_{1},yz_{1}+1,w)
\]
is equivariant for the $\mathbb{G}_{a,S}$-actions on $X_{0}$ and
$X_{1}$ and corresponds to an equivariant affine modification of
this type: it restricts to an isomorphism outside the fibers of $\pi_{0}$
and $\pi_{1}$ over $o$, and it contracts $\pi_{1}^{-1}(o)=\mathrm{Spec}(\kappa[z_{2},w])$
onto the curve $\{p=q-1=0\}\subset\pi_{0}^{-1}(o)=\{pr-q(q-1)=0\}$.
This curve is isomorphic to $\mathbb{A}_{\kappa}^{1}=\mathrm{Spec}(\kappa[r])$
and it is mapped by the restriction 
\[
\theta|{}_{\pi_{0}^{-1}(o)}:\pi_{0}^{-1}(o)\simeq\{pr-q(q-1)=0\}\rightarrow E=\mathbb{P}_{\kappa}^{1},\;(p,q,r)\mapsto[p:q-1]=[q:r]
\]
of $\theta$ isomorphically onto the complement of the $\kappa$-rational
point $[0:1]\in\mathbb{P}_{\kappa}^{1}$. 
\end{example}

\begin{proof}[Proof of Theorem \ref{thm:Affine-Ga-extensions-A2}]
By virtue of Theorem \ref{prop:Main-Ext-Prop}, there exists a unique
integer $\ell_{0}\geq2$ such that $\rho:P\rightarrow S_{*}$ is the
restriction of a torsor $\theta_{1}:W_{1}\rightarrow\overline{S}_{1}$
under the line bundle $M_{1}(\ell_{0})=\mathrm{Spec}(\mathrm{Sym}^{\cdot}\mathcal{O}_{\overline{S}_{1}}(-\ell_{0}\overline{E}_{1}))\rightarrow\overline{S}_{1}$
whose total space $W_{1}$ is affine over $\overline{S}_{1}$. We
now treat the case of $S_{1}(o_{1})$ and $S_{n}(o_{1},\ldots,o_{n-1})$,
$n\geq2$ separately.

Given a $\kappa$-rational point $o_{1}\in\overline{E}_{1}$, the
restriction of $W_{1}$ over $E_{1}=\overline{E}_{1}\setminus\{o_{1}\}\simeq\mathbb{A}_{\kappa}^{1}$
is the trivial $\mathbb{A}^{1}$-bundle $E_{1}\times\mathbb{A}_{\kappa}^{1}$.
Since on the other hand the restriction $\theta_{1}|_{\overline{E}_{1}}:W_{1}|_{\overline{E}_{1}}\rightarrow\overline{E}_{1}$
is a non trivial $\mathcal{O}_{\mathbb{P}^{1}}(-\ell_{0})$-torsor
(see Theorem \ref{prop:Main-Ext-Prop}), it follows that for every
section $s:E_{1}\rightarrow W_{1}|_{E_{1}}$ the image $Z_{1}$ of
$E_{1}$ in $W_{1}|_{\overline{E}_{1}}$ is a closed curve isomorphic
to $E_{1}$. Indeed, otherwise if $Z_{1}$ is not closed in $W_{1}|_{\overline{E}_{1}}$
then its closure $\overline{Z}_{1}$ would be a section of $\theta_{1}|_{\overline{E}_{1}}$
in contradiction with the fact that $\theta_{1}|_{\overline{E}_{1}}:W_{1}|_{\overline{E}_{1}}\rightarrow\overline{E}_{1}$
is a non trivial $\mathcal{O}_{\mathbb{P}^{1}}(-\ell_{0})$-torsor.
Let $D_{1}=\theta_{1}^{-1}(\overline{E}_{1})$ and let $\sigma_{1}:W_{1}'\rightarrow W_{1}$
be the affine modification of $W_{1}$ with center $(\mathcal{I}_{Z_{1}},D_{1})$
. By virtue of Lemmas \ref{lem:A1bundle-affmod} and \ref{lem:Torsor-affmod},
$\theta_{1}\circ\sigma_{1}:W_{1}'\rightarrow\overline{S}_{1}$ factors
through a torsor $\theta_{1}':W_{1}'\rightarrow\overline{S}_{1}\setminus\{o_{1}\}=S_{1}(o_{1})$
under the line bundle 
\[
M_{1}'(\ell_{0}-1)=\mathrm{Spec}(\mathrm{Sym}^{\cdot}\mathcal{O}_{S_{1}(o_{1})}((-\ell_{0}+1)E_{1}))\rightarrow S_{1}(o_{1}).
\]
Now since $E_{1}\simeq\mathbb{A}_{\kappa}^{1}$ is affine, the restriction
of $\theta_{1}'$ over $E_{1}\subset S_{1}(o_{1})$ is the trivial
$M_{1}'(\ell_{0}-1)|_{E_{1}}$-torsor. Letting $D_{2}={\theta_{1}'}^{-1}(E_{1})$
and $Z_{2}\subset D_{2}$ be any section of $\theta'_{1}|_{D_{2}}:D_{2}\rightarrow E_{1}$,
the affine modification $\sigma_{2}:W_{2}'\rightarrow W_{1}'$ with
center $(\mathcal{I}_{Z_{2}},D_{2})$ is then an $M_{1}'(\ell_{0}-2)$-torsor
$\theta'_{2}:W_{2}'\rightarrow S_{1}(o_{1})$. Iterating this construction
$\ell_{0}-1$ times, we reach a $\mathbb{G}_{a,S_{1}(o_{1})}$-torsor
$q=\theta_{\ell_{0}+1}':X=W'_{\ell_{0}+1}\rightarrow S_{1}(o_{1})$.
Since $\sigma_{1}:W_{1}'\rightarrow W_{1}$ and each $\sigma_{i}:W_{i}'\rightarrow W_{i-1}'$,
$i\geq2$, restricts to an isomorphism over the complement of $E_{1}$,
the restriction of $q:X\rightarrow S_{1}(o_{1})$ over $S_{1}(o_{1})\setminus E_{1}\simeq S_{*}$
is isomorphic to $\rho:P\rightarrow S_{*}$. Furthermore, since the
morphisms $\sigma_{i}$, $i=1,\ldots,\ell_{0}+1$ are affine and $\overline{\tau}_{1}\circ\theta_{1}:W_{1}\rightarrow S$
is an affine morphism, it follows that 
\[
\tau_{1}\circ q=\overline{\tau}_{1}\circ\theta_{1}\circ\sigma_{1}\circ\cdots\sigma_{\ell_{0}+1}:X\rightarrow S
\]
is an affine morphism. So $q:X\rightarrow S_{1}(o_{1})$ is a $\mathbb{G}_{a}$-extension
of $\rho:P\rightarrow S_{*}$ with the desired property. 

Now suppose that $n\geq2$. It follows from the construction of the
morphism $\tau_{n,1}:S_{n}=S_{n}(o_{1},\ldots,o_{n-1})\rightarrow\overline{S}_{1}$
given in subsection \ref{subsec:ExtensionFamilies} that $\tau_{n,1}^{*}\mathcal{O}_{\overline{S}_{1}}(\ell_{0}\overline{E}_{1})\simeq\mathcal{O}_{S_{n}}(mE_{n})$
for some $m\geq2$. The fiber product $W_{n}=W_{1}\times_{\overline{S}_{1}}S_{n}$
is thus a torsor $\theta_{n}:W_{n}\rightarrow S_{n}$ under the line
bundle 
\[
M_{n}(m)=\mathrm{Spec}(\mathrm{Sym}^{\cdot}\mathcal{O}_{S_{n}}(-mE_{n}))\rightarrow S_{n}
\]
whose restriction to $S_{n}\setminus E_{n}\simeq S_{*}$ is isomorphic
to $\rho:P\rightarrow S_{*}$. Furthermore, since $\tau_{n,1}$ is
an affine morphism by virtue of Lemma \ref{lem:blownup-surfaces-properties},
so is the projection $\mathrm{pr}_{W_{1}}:W_{n}\rightarrow W_{1}$.
Since $\overline{\tau}_{1}\circ\theta_{1}:W_{1}\rightarrow S$ is
an affine morphism, we conclude that $\tau_{n}\circ\theta_{n}=\overline{\tau}_{1}\circ\tau_{n,1}\circ\theta_{n}=\overline{\tau}_{1}\circ\theta\circ\mathrm{pr}_{W_{1}}:W_{n}\rightarrow S$
is an affine morphism as well. Since $E_{n}\simeq\mathbb{A}_{\kappa}^{1}$,
the restriction of $\theta_{n}$ over $E_{n}$ is the trivial $M_{n}(m)|_{E_{n}}$-torsor.
The desired $\mathbb{G}_{a,S_{n}}$-torsor $q:X\rightarrow S_{n}$
extending $\rho:P\rightarrow S_{*}$ is then obtained from $\theta_{n}:W_{n}\rightarrow S_{n}$
by performing a sequence of $m$ successive affine modifications similar
to those applied in the previous case. \end{proof}
\begin{rem}
In the case where $S$ is affine, the total spaces $X$ of the varieties
$q:X\rightarrow S_{n}(o_{1},\ldots,o_{n-1})$ of Theorem \ref{thm:Affine-Ga-extensions-A2}
are all affine. To our knowledge, these are the first instances of
smooth affine threefolds equipped with proper $\mathbb{G}_{a}$-actions
whose geometric quotients are smooth quasi-projective surfaces which
are not quasi-affine. 
\end{rem}

We do not know in general if under the conditions of Theorem \ref{thm:Affine-Ga-extensions-A2}
every quasi-projective $\mathbb{G}_{a}$-extensions of $P$ of Type
II into the total space of a $\mathbb{G}_{a}$-torsor $q:X\rightarrow S_{n}(o_{1},\ldots,o_{n-1})$
has the property that $\pi=\tau_{n}\circ q:X\rightarrow S$ is an
affine morphism. In particular, we ask the following:
\begin{question}
Is the total space $X$ of a quasi-projective $\mathbb{G}_{a}$-extension
$\pi:X\rightarrow\mathbb{A}^{2}$ of $\rho=\mathrm{pr}_{x,y}:\mathrm{SL}_{2}=\{xv-yu=1\}\rightarrow\mathbb{A}_{*}^{2}$
of Type II always an affine variety ? 
\end{question}

\subsection{Examples}

In the next paragraphs, we construct two countable families of quasi-projective
$\mathbb{G}_{a}$-extensions of the $\mathbb{G}_{a}$-torsor $\mathrm{SL}_{2}\rightarrow\mathrm{SL}_{2}/\mathbb{G}_{a}\simeq\mathbb{A}^{2}\setminus\{(0,0)\}$
of Type II with affine total spaces. As a consequence of \cite[Section 3]{He15},
for any nontrivial $\mathbb{G}_{a}$-torsor $\rho:P\rightarrow S_{*}$
over a local punctured surface $S_{*}$, these provide, by suitable
base changes, families of examples of $\mathbb{G}_{a}$-extensions
of $P$ whose total spaces are all affine over $S$. 

\subsubsection{\label{subsec:TypeA} A family of $\mathbb{G}_{a}$-extensions of
$\mathrm{SL}_{2}$ of ``Type II-A'' }

Let $S=\mathbb{A}^{2}=\mathrm{Spec}(k[x,y_{0}])$ and let $X_{n}\subset\mathbb{A}_{S}^{n+2}=\mathrm{Spec}(k[x,y_{0}][z_{1},z_{2},y_{1},\ldots,y_{n}])$,
$n\geq1$, be the smooth threefold defined by the system of equations
\[
\begin{cases}
y_{i}y_{j}-y_{k}y_{\ell} & =0\quad i,j,k,\ell=0,\ldots,n,\;i+j=k+\ell\\
z_{2}y_{i}-z_{1}y_{i+1} & =0\quad i=0,\ldots,n-1\\
xy_{i+1}-y_{i}(y_{0}z_{1}+1) & =0\quad i=0,\ldots,n-1\\
xz_{2}-z_{1}(y_{0}z_{1}+1) & =0.
\end{cases}
\]
The threefold $X_{n}$ can be endowed with a fixed point free $\mathbb{G}_{a,S}$-action
induced by the locally nilpotent $k[x,y_{0}]$-derivation 
\[
x\partial_{z_{1}}+(2y_{0}z_{1}+1)\partial_{z_{2}}+\sum_{i=1}^{n}iy_{0}y_{i-1}\partial_{y_{i}}
\]
of its coordinate ring. The scheme-theoretic fiber over $o=\{(0,0)\}$
of the $\mathbb{G}_{a}$-invariant morphism $\pi_{n}=\mathrm{pr}_{x,y,}:X_{n}\rightarrow S$
is isomorphic $\mathbb{A}^{2}=\mathrm{Spec}(k[z_{2},y_{n}])$, on
which the induced $\mathbb{G}_{a}$-action is a translation induced
by the derivation $\partial_{z_{2}}$ of $k[z_{2},y_{n}]$. On the
other hand, the morphism $j:\mathrm{SL}_{2}=\left\{ xv-y_{0}u=1\right\} \rightarrow X_{n}$
defined by 
\[
(x,y,u,v)\mapsto(x,u,uv,y,yv,yv^{2},\ldots,yv^{n})
\]
is an equivariant open embedding of $\mathrm{SL}_{2}$ equipped with
the $\mathbb{G}_{a}$-action induced by the locally nilpotent derivation
$x\partial_{u}+y_{0}\partial_{v}$ of its coordinate ring into $X_{n}$
with image equal to $\pi^{-1}(\mathbb{A}^{2}\setminus\{o\})$. So
$j:\mathrm{SL}_{2}\hookrightarrow X_{n}$ is a quasi-projective $\mathbb{G}_{a}$-extension
of $\mathrm{SL}_{2}$ into the affine variety $X_{n}$, with $\pi_{n}^{-1}(o)\simeq\mathbb{A}_{k}^{2}$. 

The restrictions of the projection $\mathbb{A}_{S}^{n+3}\rightarrow\mathbb{A}_{S}^{n+2}$
onto the first $n+2$ variables induce a sequence of $\mathbb{G}_{a}$-equivariant
birational morphisms $\sigma_{n+1,n}:X_{n+1}\rightarrow X_{n}$. The
threefolds $X_{n}$ thus form a countable tower of $\mathbb{G}_{a}$-equivariant
affine modifications of $X_{1}$. It follows from Example \ref{exa:ProperExtension}
that $X_{1}$ is a quasi-projective extension of $\mathrm{SL}_{2}$
of Type II with geometric quotient isomorphic to a quasi-projective
surface of the form $S_{1}(o_{1})$. More generally, we have the following
result. 
\begin{prop}
For every $n\geq2$, the morphism $j:\mathrm{SL}_{2}\hookrightarrow X_{n}$
is a quasi-projective $\mathbb{G}_{a}$-extension of Type II. The
geometric quotient $X_{n}/\mathbb{G}_{a}$ is isomorphic to a quasi-projective
surface $S_{n}=S_{n}(o_{1},\ldots,o_{n})$ as in $\S$ \ref{subsec:ExtensionFamilies}
for which $\overline{S}_{n}(o_{1},\ldots,o_{n-1})\setminus S_{n}$
consists of a chain of $n-1$ smooth rational curves with self-intersection
$-2$, i.e. the exceptional set of the minimal resolution of a surface
singularity of type $A_{n-1}$. 
\end{prop}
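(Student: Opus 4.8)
The plan is to exhibit the geometric quotient $X_n/\mathbb{G}_a$ explicitly as the iterated blow-up resolving a single invariant rational map, and then to read off both the Type II property and the dual graph from that description. Throughout, $S=\mathbb{A}^2=\mathrm{Spec}(k[x,y_0])$, $o=(0,0)$, $\kappa=k$, and $S_*=\mathbb{A}^2\setminus\{o\}$. First I would record the elementary points, all already visible in the statement: the embedding $j$ identifies $\mathrm{SL}_2=\{xv-y_0u=1\}$ with $\pi_n^{-1}(S_*)$, so that $\pi_n\colon X_n\to S$ is a $\mathbb{G}_a$-extension of $\rho=\mathrm{pr}_{x,y_0}\colon\mathrm{SL}_2\to S_*$; the threefold $X_n$ is smooth; and from the defining equations $\pi_n^{-1}(o)_{\mathrm{red}}=\mathrm{Spec}(k[z_2,y_n])\cong\mathbb{A}^2_k$ with induced action the translation $\partial_{z_2}$. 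Hence it remains only to produce a separated geometric quotient (which yields properness) and to identify it.

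The key computation is that the relations $xy_{i+1}=y_i(y_0z_1+1)$ give $y_k=y_0(y_0z_1+1)^kx^{-k}$ away from $\pi_n^{-1}(o)$, hence the identity $y_0/x^k=y_k/(y_0z_1+1)^k$ for $1\le k\le n$. Since $y_0z_1+1$ is a unit in a neighbourhood of $\pi_n^{-1}(o)$ (it restricts to $1$ on the fiber), each invariant rational function $y_0/x^k\in k(x,y_0)$ extends to a regular function near the fiber, and the invariant maps $[x^k:y_0]=[(y_0z_1+1)^k:y_k]$ to $\mathbb{P}^1$, for $k=1,\dots,n$, lift without indeterminacy to $X_n$. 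Letting $\overline{\tau}\colon\overline{S}_n\to S$ be the sequence of blow-ups resolving the rational map $(x,y_0)\mapsto[x^n:y_0]$, these formulas together with $\mathrm{pr}_{x,y_0}$ define a $\mathbb{G}_a$-invariant morphism $q\colon X_n\to\overline{S}_n$. On $\mathrm{SL}_2$ it is $\rho$, mapping onto $\overline{\tau}^{-1}(S_*)\cong S_*$; on $\pi_n^{-1}(o)$ it is $(z_2,y_n)\mapsto y_n$, mapping onto the open subset $E_n\cong\mathbb{A}^1$ of the last exceptional curve $\overline{E}_n$, namely the complement of the single point $\overline{E}_n\cap\overline{E}_{n-1}$ (equivalently the value $y_n=\infty$ omitted by the fiber). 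Writing $S_n\subseteq\overline{S}_n$ for the image, $q\colon X_n\to S_n$ is surjective with every fiber a single $\mathbb{G}_a$-orbit $\cong\mathbb{A}^1$ (over $S_*$ by the torsor structure of $\rho$, over $E_n$ by the translation structure of the fiber). Exactly as in Example \ref{exa:ProperExtension}, $q$ is then a smooth morphism, hence a $\mathbb{G}_a$-torsor, so $X_n/\mathbb{G}_a\cong S_n$; in particular the action is proper and $j\colon\mathrm{SL}_2\hookrightarrow X_n$ is a quasi-projective $\mathbb{G}_a$-extension of Type II.

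It then remains to identify $S_n$ with a surface of \S\ref{subsec:ExtensionFamilies} and to compute the dual graph of $\overline{S}_n\setminus S_n$. I would resolve $[x^n:y_0]$ step by step: after blowing up $o$ the map becomes $[x^{n-1}:t]$ with $t=y_0/x$, whose unique point of indeterminacy lies on $\overline{E}_1$; blowing it up yields $[x^{n-2}:t_1]$ with a single indeterminacy point on $\overline{E}_2\setminus\overline{E}_1$, and so on. After $n$ blow-ups the map is a morphism, and the centres form a chain: each of $\overline{E}_1,\dots,\overline{E}_{n-1}$ is blown up exactly once after its creation, hence has self-intersection $-2$, while $\overline{E}_n$ has self-intersection $-1$, and each $\overline{E}_{k+1}$ meets only $\overline{E}_k$. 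This is precisely the construction of \S\ref{subsec:ExtensionFamilies}, so $S_n=\overline{S}_n\setminus(\overline{E}_1\cup\cdots\cup\overline{E}_{n-1})=S_n(o_1,\dots,o_{n-1})$, and the removed locus is a chain of $n-1$ smooth rational $(-2)$-curves, i.e. the exceptional divisor of the minimal resolution of an $A_{n-1}$ singularity.

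The main obstacle is the middle step: verifying that the invariant rational maps $[x^k:y_0]$ genuinely patch to a morphism $q$ on all of $X_n$ with target the resolution $\overline{S}_n$ (the identity $[x^k:y_0]=[(y_0z_1+1)^k:y_k]$ controls a neighbourhood of the fiber, while $\mathrm{pr}_{x,y_0}$ controls $\mathrm{SL}_2$, and one must check these agree on the overlap and that no further modification of $S$ is needed), and that $q$ is a torsor, i.e. smooth with orbit fibers. Once this is in place the self-intersection bookkeeping of the last paragraph is routine, since the indeterminacy locus of the partially resolved map is a single reduced $k$-rational point at every stage, which forces precisely the linear $A_{n-1}$ chain.
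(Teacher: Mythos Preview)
Your proposal is correct and follows essentially the same route as the paper. The paper constructs the tower of blow-ups $\overline{S}_m(o_1,\ldots,o_{m-1})$ step by step in explicit affine charts $U_m=\mathrm{Spec}(k[x,w_m])$ with $w_m=y_0/x^m$, and lifts $\pi_n$ inductively via the formula $(x,\,y_m/(y_0z_1+1)^m)$; this is exactly your identity $y_0/x^k=y_k/(y_0z_1+1)^k$, and your packaging of the process as ``resolve the indeterminacy of $[x^n:y_0]$'' is a concise rephrasing of the same computation, leading to the same $A_{n-1}$ chain.
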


\begin{proof}
To see this, we consider the following sequence of blow-ups: the first
one $\overline{\tau}_{1}:\overline{S}_{1}\rightarrow U_{0}=\mathbb{A}^{2}$
is the blow-up of the origin, with exceptional divisor $\overline{E}_{1}$,
and we let $U_{1}\simeq\mathbb{A}^{2}=\mathrm{Spec}(k[x,w_{1}])$
be the affine chart of $\overline{S}_{1}$ on which $\overline{\tau}_{1}:\overline{S}_{1}\rightarrow\mathbb{A}^{2}$
is given by $(x,w_{1})\mapsto(x,xw_{1})$. Then we let $\overline{\tau}_{2,1}:\overline{S}_{2}(o_{1})\rightarrow\overline{S}_{1}$
be the blow-up of the point $o_{1}=(0,0)\in U_{1}\subset\overline{S}_{1}$
with exceptional divisor $\overline{E}_{2}$, and we let $U_{2}\simeq\mathbb{A}^{2}=\mathrm{Spec}(k[x,w_{2}])$
be the affine chart of $\overline{S}_{2}(o_{1})$ on which the restriction
of $\overline{\tau}_{2,1}:\overline{S}_{2}(o_{1})\rightarrow\overline{S}_{1}$
coincides with the morphism $U_{2}\rightarrow U_{1}$, $(x,w_{2})\mapsto(x,xw_{2})$.
For every $2<m\leq n$, we define by induction the blow-up 
\[
\overline{\tau}_{m,m-1}:\overline{S}_{m}(o_{1},\ldots,o_{m-1})\rightarrow\overline{S}_{m-1}(o_{1},\ldots,o_{m-2})
\]
of the point $o_{m-1}=(0,0)\in U_{m-1}\subset\overline{S}_{m-1}(o_{1},\ldots,o_{m-2})$
with exceptional divisor $\overline{E}_{m}$ and we let $U_{m}\simeq\mathbb{A}^{2}=\mathrm{Spec}(k[x,w_{m}])$
be the affine chart of $\overline{S}_{m}(o_{1},\ldots,o_{m-1})$ on
which the restriction of $\overline{\tau}_{m,m-1}$ coincides with
the morphism $U_{m}\rightarrow U_{m-1}$, $(x,w_{m})\mapsto(x,xw_{m})$.
By construction, we have a commutative diagram \[\xymatrix@C8ex{\overline{S}_n(o_1,\ldots,o_{n-1}) \ar[r]^{\overline{\tau}_{n,n-1}} & \overline{S}_{n-1}(o_1,\ldots,o_{n-2}) \ar[r]^-{\overline{\tau}_{n-1,n-2}}  & \cdots &  \ar[r]^{\overline{\tau}_{2,1}} & \overline{S}_{1} \ar[r]^{\overline{\tau}_1} & \mathbb{A}^2 \\ U_n \ar[u] \ar[r] & U_{n-1} \ar[u] \ar[r] & \cdots & \ar[r] & U_1 \ar[u] \ar[r] & \mathbb{A}^2=U_0. \ar@{=}[u]} \]  

The total transform of $\overline{E}_{1}$ in $\overline{S}_{n}(o_{1},\ldots,o_{n-1})$
is a chain $\overline{E}_{1}\cup\overline{E}_{2}\cup\cdots\cup\overline{E}_{n-1}\cup\overline{E}_{n}$
is a chain formed of $n-1$ curves with self-intersection $-2$ and
the curve $\overline{E}_{n}$ which has self-intersection $-1$. 

\begin{figure}[ht]
\centering
\begin{tikzpicture}
\draw (0,0) -- (1,0) --(1.5,0);
\draw [dashed](1.5,0) -- (7.5,0);
\draw (7.5,0) -- (8,0) -- (9,0);
\draw (0,0) node {$\bullet$}; 
\draw (0,0) node[above] {$\overline{E}_1$}; 
\draw (0,0) node[below] {$-2$};
\draw (1,0) node {$\bullet$};
\draw (1,0) node[above] {$\overline{E}_2$}; 
\draw (1,0) node[below] {$-2$};
\draw (8,0) node {$\bullet$};
\draw (8,0) node[above] {$\overline{E}_{n-1}$}; 
\draw (8,0) node[below] {$-2$};
\draw (9,0) node {$\bullet$};
\draw (9,0) node[above] {$\overline{E}_{n}$}; 
\draw (9,0) node[below] {$-1$};
\end{tikzpicture}
\caption{Dual graph of the total transform of $\overline{E}_1$ in $\overline{S}_n(o_1,\ldots ,o_n)$.} 
\label{fig:DualGraphTypeA} 
\end{figure}

The morphism $\pi:X_{n}\rightarrow S$ lifts to a morphism $\pi_{1}:X_{n}\rightarrow\overline{S}_{1}$
defined by 
\[
(x,z_{1},z_{2},y_{0},y_{1},\ldots,y_{n})\mapsto((x,y_{0}),[x:y_{0}])=((x,y),[y_{0}z_{1}+1:y_{1}]).
\]
This morphism contracts $\pi^{-1}(o)$ onto the point $o_{1}=((0,0),[1:0])$
of the exceptional divisor $\overline{E}_{1}$ of $\overline{\tau}_{1}$.
The induced rational map $\pi_{1}:X_{n}\dashrightarrow U_{1}$ is
given by 
\[
(x,z_{1},z_{2},y_{0},y_{1},\ldots,y_{n})\mapsto(x,\frac{y_{1}}{y_{0}z_{1}+1})
\]
and it contracts $\pi^{-1}(o)$ onto the origin $o_{1}=(0,0)$. So
$\pi_{1}$ lifts to a morphism $\pi_{2}:X_{n}\rightarrow\overline{S}_{2}(o_{1})$,
and with our choice of charts, the induced rational map $\pi_{2}:X_{n}\dashrightarrow U_{2}$
is given by 
\[
(x,z_{1},z_{2},y_{0},y_{1},\ldots,y_{n})\mapsto(x,\frac{y_{2}}{(y_{0}z_{1}+1)^{2}})
\]
If $n=2$ then the image of $\pi^{-1}(o)=\mathrm{Spec}(k[z_{2},y_{2}])$
by $\pi_{2}$ is equal to $\overline{E}_{2}\cap U_{2}$ and $\pi_{2}^{-1}(\overline{E}_{2}\cap U_{2})$
is equivariantly isomorphic to $(\overline{E}_{2}\cap U_{2})\times\mathrm{Spec}(k[z_{2}])$
on which $\mathbb{G}_{a}$ acts by translations on the second factor.
So $\pi_{2}:X_{n}\rightarrow\overline{S}_{2}(o_{1})$ factors through
a $\mathbb{G}_{a}$-bundle $q_{2}:X_{2}\rightarrow S_{2}(o_{1})=\overline{S}_{2}(o_{1})\setminus\overline{E}_{1}$
and $X_{2}/\mathbb{G}_{a}\simeq S_{2}(o_{1})$. Otherwise, if $n>2$
then $\pi_{2}$ contracts $\pi^{-1}(o)$ onto the point $o_{2}=(0,0)\in\overline{E}_{2}\cap U_{2}\subset\overline{S}_{2}(o_{1})$.
So $\pi_{2}:X_{n}\rightarrow\overline{S}_{2}(o_{1})$ lifts to a morphism
$\pi_{3}:X_{n}\rightarrow\overline{S}_{3}(o_{1},o_{2})$. With our
choice of charts, for each $2<m<n$, the induced rational map $\pi_{m}:X_{n}\dashrightarrow U_{m}$
is given by 
\[
(x,z_{1},z_{2},y_{0},y_{1},\ldots,y_{n})\mapsto(x,\frac{y_{m}}{(y_{0}z_{1}+1)^{m}})
\]
hence contracts $\pi^{-1}(o)$ onto the point $o_{m}=(0,0)\in U_{m}\subset\overline{S}_{m}(o_{1},\ldots,o_{m-1})$.
It thus lifts to a morphism $\pi_{m}:X_{n}\rightarrow\overline{S}_{m}(o_{1},\ldots,o_{m-1})$.
At the last step, the image of $\pi^{-1}(o)=\mathrm{Spec}(k[z_{2},y_{n}])$
by the rational map $\pi_{n}:X_{n}\dashrightarrow U_{n}$ induced
by $\pi_{n}:X_{n}\rightarrow\overline{S}_{n}(o_{1},\ldots,o_{n-1})$
is equal to $\overline{E}_{n}\cap U_{n}$, and we conclude as above
that $\pi_{n}:X_{n}\rightarrow\overline{S}_{n}(o_{1},\ldots,o_{n-1})$
factors through a $\mathbb{G}_{a}$-bundle 
\[
q_{n}:X_{n}\rightarrow S_{n}(o_{1},\ldots,o_{n-1})=\overline{S}_{n}(o_{1},\ldots,o_{n-1})\setminus(\overline{E}_{1}\cup\cdots\cup\overline{E}_{n-1}),
\]
hence that $X_{n}/\mathbb{G}_{a}$ is isomorphic to the quasi-projective
surface $S_{n}(o_{1},\ldots,o_{n-1})$. 
\end{proof}

\subsubsection{A family of $\mathbb{G}_{a}$-extensions of $\mathrm{SL}_{2}$ of
``Type II-D''}

To conclude this section, we present as an illustration of the proof
of Theorem \ref{thm:Affine-Ga-extensions-A2} another countable family
of quasi-projective $\mathbb{G}_{a}$-extensions of $\mathrm{SL}_{2}$
of Type II with  affine total spaces. 

Let again $\overline{\tau}_{1}:\overline{S}_{1}\rightarrow S=\mathbb{A}^{2}$
be the blow-up of the origin $o=\{(0,0)\}$ in $\mathbb{A}^{2}=\mathrm{Spec}(k[x,y])$
with exceptional divisor $\overline{E}_{1}\simeq\mathbb{P}^{1}$,
identified with closed subvariety of $\mathbb{A}^{2}\times\mathbb{P}_{[w_{0}:w_{1}]}^{1}$
with equation $xw_{1}-yw_{0}=0$ in such a way that $\tau$ coincides
with the restriction of the first projection. The second projection
identifies $\overline{S}_{1}$ with the total space $p:\overline{S}_{1}\rightarrow\mathbb{P}^{1}$
of the invertible sheaf $\mathcal{O}_{\mathbb{P}^{1}}(-1)$. We fix
trivializations $p^{-1}(U_{\infty})=\mathrm{Spec}(k[z_{\infty}][u_{\infty}])$
and $p^{-1}(U_{0})=\mathrm{Spec}(k[z_{0}][u_{0}])$ over the open
subsets $U_{\infty}=\mathbb{P}^{1}\setminus\{[0:1]\}=\mathrm{Spec}(k[z_{\infty}])$
and $U_{0}=\mathbb{P}^{1}\setminus\{[1:0]\}=\mathrm{Spec}(k[z_{0}])$
in such a way that the gluing of $p^{-1}(U_{\infty})$ and $p^{-1}(U_{0})$
over $U_{0}\cap U_{\infty}$ is given by the isomorphism $(z_{0},u_{0})\mapsto(z_{\infty},u_{\infty})=(z_{0}^{-1},z_{0}u_{0})$.

For every $n\geq1$, we let $S_{2n+3,0}=\mathrm{Spec}(k[z_{0},u_{0}^{\pm1}])$,
\[
S_{2n+3,\infty}=\mathrm{Spec}(k[z_{\infty},u_{\infty},v_{\infty}]/(u_{\infty}^{n}v_{\infty}-z_{\infty}^{2}-u_{\infty})),
\]
and we let $S_{2n+3}$ be the surface obtained by gluing $S_{2n+3,0}$
and $S_{2n+3,\infty}$ along the open subsets $S_{2n+3,0}\setminus\{z_{0}=0\}$
and $S_{2n+3,\infty}\setminus\{z_{\infty}=u_{\infty}=0\}$ by the
isomorphism 
\[
(z_{0},u_{0})\mapsto(z_{\infty},u_{\infty},v_{\infty})=(z_{0}^{-1},z_{0}u_{0},(z_{0}u_{0})^{-n}(z_{0}^{-2}+z_{0}u_{0})).
\]
The canonical open immersion $S_{2n+3,0}\hookrightarrow p^{-1}(U_{0})$
and the projection $\mathrm{pr}_{z_{\infty},u_{\infty}}:S_{2n+3,\infty}\rightarrow p^{-1}(U_{\infty})$
glue to a global birational affine morphism $\tau_{2n+3,1}:S_{2n+3}\rightarrow\overline{S}_{1}$
restricting to an isomorphism $S_{2n+3}\setminus\{z_{\infty}=u_{\infty}=0\}\rightarrow\overline{S}_{1}\setminus\overline{E}_{1}$
where we identified the closed subset $E_{2n+3}=\{z_{\infty}=u_{\infty}=0\}\simeq\mathrm{Spec}(k[v_{\infty}])$
of $S_{2n+3,\infty}$ with its image in $S_{2n+3}$. We leave to the
reader to check that with the notation of $\S$ \ref{subsec:ExtensionFamilies},
$S_{2n+3}=S_{2n+3}(o_{1},\ldots,o_{2n+2})$ for a surface $\overline{\tau}_{2n+3,1}:\overline{S}_{2n+3,1}(o_{1},\ldots,o_{2n+2})\rightarrow\overline{S}_{1}$
obtained by first blowing-up the point $o_{1}=(0,0)\in p^{-1}(U_{\infty})$
with exceptional divisor $\overline{E}_{2}$, then the point $o_{2}=\overline{E}_{1}\cap\overline{E}_{2}$
with exceptional divisor $\overline{E}_{3}$, then a point $o_{3}\in\overline{E}_{3}\setminus(\overline{E}_{1}\cup\overline{E}_{2})$
with exceptional divisor $\overline{E}_{4}$ and then a sequence of
point $o_{i}\in\overline{E}_{i}\setminus\overline{E}_{i-1}$ with
exceptional divisor $\overline{E}_{i+1}$, $i=5,\ldots,2n+2$ in such
a way that the total transform of $\overline{E}_{1}$ in $\overline{S}_{2n+3,1}$
is a tree depicted in Figure \ref{fig:DualGraphTypeD}. Letting $\tau_{2n+3}=\overline{\tau}_{1}\circ\tau_{2n+3,1}:S_{2n+3}\rightarrow\mathbb{A}^{2}$,
we have $\tau_{2n+3}^{-1}(o)_{\mathrm{red}}=E_{2n+3}\simeq\mathbb{A}^{1}$
and $\tau_{2n+3}^{*}(o)=2E_{2n+3}$. 

\begin{figure}[ht]
\centering
\begin{tikzpicture}
\draw (-0.7,0.7) -- (0,0);
\draw (-0.7,-0.7) -- (0,0);
\draw (0,0) -- (1,0) --(1.5,0);
\draw [dashed](1.5,0) -- (7.5,0);
\draw (7.5,0) -- (8,0) -- (9,0);
\draw (-0.7,0.7) node {$\bullet$};
\draw (-0.7,0.7) node[above] {$\overline{E}_1$}; 
\draw (-0.8,0.6) node[below] {$-3$};

\draw (-0.7,-0.7) node {$\bullet$};
\draw (-0.7,-0.6) node[above] {$\overline{E}_2$}; 
\draw (-0.7,-0.7) node[below] {$-2$};

\draw (0,0) node {$\bullet$}; 
\draw (.2,0) node[above] {$\overline{E}_3$}; 
\draw (0.2,0) node[below] {$-2$};

\draw (1,0) node {$\bullet$}; 
\draw (1,0) node[above] {$\overline{E}_4$}; 
\draw (1,0) node[below] {$-2$};

\draw (8,0) node {$\bullet$};
\draw (8,0) node[above] {$\overline{E}_{2n+2}$}; 
\draw (8,0) node[below] {$-2$};

\draw (9,0) node {$\bullet$};
\draw (9,0) node[above] {$\overline{E}_{2n+3}$}; 
\draw (9,0) node[below] {$-1$};
\end{tikzpicture}
\caption{Dual graph of the total transform of $\overline{E}_1$ in $\overline{S}_{2n+3}(o_{1},\ldots,o_{2n+2})$.} 
\label{fig:DualGraphTypeD} 
\end{figure}

Now we let $q:X_{2n+3}\rightarrow S_{2n+3}$ be the $\mathbb{G}_{a}$-bundle
defined as the gluing of the trivial $\mathbb{G}_{a}$-bundles $X_{2n+3,0}=S_{2n+3,0}\times\mathrm{Spec}(k[t_{0}])$
and $X_{2n+3,\infty}=S_{2n+3,\infty}\times\mathrm{Spec}(k[t_{\infty}])$
over $S_{2n+3,0}$ and $S_{2n+3,\infty}$ respectively along the open
subsets $X_{2n+3,0}\setminus\{z_{0}=0\}$ and $X_{2n+3,\infty}\setminus\{z_{\infty}=u_{\infty}=0\}$
by the $\mathbb{G}_{a}$-equivariant isomorphism 
\[
(z_{0},u_{0},t_{0})\mapsto(z_{\infty},u_{\infty},v_{\infty},t_{\infty})=(z_{0}^{-1},z_{0}u_{0},(z_{0}u_{0})^{-n}(z_{0}^{-2}+z_{0}u_{0}),t_{0}+z_{0}^{-1}u_{0}^{-2}).
\]
Let $\pi_{2n+3}=\overline{\tau}_{1}\circ\tau_{2n+3,1}\circ q:X_{2n+3}\rightarrow\mathbb{A}^{2}$. 
\begin{prop}
For every $n\geq1$, the variety $X_{2n+3}$ is affine and there exists
a $\mathbb{G}_{a}$-equivariant open embedding $j:\mathrm{SL}_{2}\hookrightarrow X_{2n+3}$
which makes $\pi_{2n+3}:X_{2n+3}\rightarrow\mathbb{A}^{2}$ a quasi-projective
$\mathbb{G}_{a}$-extension of $\mathrm{SL}_{2}$ of Type II, with
fiber $\pi_{2n+3}^{-1}(o)$ isomorphic to $\mathbb{A}^{2}$ of multiplicity
two, and geometric quotient $X_{2n+3}/\mathbb{G}_{a}\simeq S_{2n+3}$.
\end{prop}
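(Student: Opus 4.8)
The plan is to establish the five assertions in turn---that $q\colon X_{2n+3}\to S_{2n+3}$ is a $\mathbb{G}_{a}$-torsor with $X_{2n+3}/\mathbb{G}_{a}\simeq S_{2n+3}$, that $\pi_{2n+3}$ is a quasi-projective $\mathbb{G}_{a}$-extension of Type II, that its restriction over $\mathbb{A}^{2}_{*}$ is the $\mathrm{SL}_{2}$-torsor, that $\pi_{2n+3}^{-1}(o)$ is a copy of $\mathbb{A}^{2}$ of multiplicity two, and that $X_{2n+3}$ is affine---reserving the bulk of the work for the last one. First I would note that $q$ is patched from the two trivial bundles $X_{2n+3,0}$ and $X_{2n+3,\infty}$ by the transition $t_{\infty}=t_{0}+z_{0}^{-1}u_{0}^{-2}$, whose translation term $z_{0}^{-1}u_{0}^{-2}$ is a regular function on the overlap; hence the translation action in the $t$-variable makes $q$ a Zariski-locally trivial $\mathbb{G}_{a}$-torsor, so $X_{2n+3}/\mathbb{G}_{a}\simeq S_{2n+3}$ and the action is free with separated quotient, hence proper. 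Granting the asserted identification $S_{2n+3}=S_{2n+3}(o_{1},\dots,o_{2n+2})$ with one of the surfaces of $\S$\,\ref{subsec:ExtensionFamilies}, the general discussion preceding Proposition \ref{prop:Extension-Existence} applies: $\pi_{2n+3}=\tau_{2n+3}\circ q$ is quasi-projective with proper action, $X_{2n+3}$ is smooth along $\pi_{2n+3}^{-1}(o)=q^{-1}(\tau_{2n+3}^{-1}(o))$ because $S_{2n+3}$ is smooth along $E_{2n+3}$, and $\pi_{2n+3}^{-1}(o)_{\mathrm{red}}=q^{-1}(E_{2n+3})\simeq E_{2n+3}\times\mathbb{A}^{1}_{\kappa}\simeq\mathbb{A}^{2}_{\kappa}$. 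Since $\tau_{2n+3}^{*}(o)=2E_{2n+3}$, the scheme-theoretic fiber $\pi_{2n+3}^{-1}(o)$ is this affine plane taken with multiplicity two, and the extension is of Type II.

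Next I would identify the restriction of $q$ over $S_{2n+3}\setminus E_{2n+3}$. Reading off $\overline{\tau}_{1}$ in the two charts gives $x=z_{0}u_{0}=u_{\infty}$ and $y=u_{0}=z_{\infty}u_{\infty}$, so that $S_{2n+3,0}\simeq\mathbb{A}^{2}_{y}$ and $S_{2n+3,\infty}\setminus E_{2n+3}\simeq\mathbb{A}^{2}_{x}$ cover $\mathbb{A}^{2}_{*}$, and on the overlap $\mathbb{A}^{2}_{xy}$ the translation term becomes $z_{0}^{-1}u_{0}^{-2}=(y/x)y^{-2}=x^{-1}y^{-1}$. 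Thus the restricted torsor is represented by the \v{C}ech $1$-cocycle $x^{-1}y^{-1}$ for the cover $\{\mathbb{A}^{2}_{x},\mathbb{A}^{2}_{y}\}$, which is precisely the class of the homogeneous torsor with $m=n=1$, $p\equiv 1$ and $d=2$, namely $\rho\colon\mathrm{SL}_{2}\to\mathbb{A}^{2}_{*}$. Setting $t_{0}=-v/y$ and $t_{\infty}=-u/x$ on $\mathrm{SL}_{2}=\{xv-yu=1\}$, so that $t_{\infty}-t_{0}=x^{-1}y^{-1}$, produces the desired $\mathbb{G}_{a}$-equivariant open embedding $j\colon\mathrm{SL}_{2}\hookrightarrow X_{2n+3}$ with image $\pi_{2n+3}^{-1}(\mathbb{A}^{2}_{*})$, exhibiting $\pi_{2n+3}$ as a $\mathbb{G}_{a}$-extension of $\mathrm{SL}_{2}$.

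The remaining and principal point is the affineness of $X_{2n+3}$, which is not formal since $S_{2n+3}$ is not affine and $\overline{\tau}_{1}$ is not an affine morphism. Here I would follow the strategy of the proof of Theorem \ref{thm:Affine-Ga-extensions-A2}. Let $\theta_{1}\colon W_{1}\to\overline{S}_{1}$ be the unique $M_{1}(2)$-torsor with affine total space extending $\mathrm{SL}_{2}$ supplied by Theorem \ref{prop:Main-Ext-Prop} (this is the threefold $X_{0}$ of Example \ref{subsec:Homogeneous-Ga-torsors}), and form $W_{2n+3}=W_{1}\times_{\overline{S}_{1}}S_{2n+3}$, a torsor under a line bundle on $S_{2n+3}$. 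Since $\tau_{2n+3,1}$ is affine by Lemma \ref{lem:blownup-surfaces-properties} and $\overline{\tau}_{1}\circ\theta_{1}\colon W_{1}\to S$ is affine, the composite $W_{2n+3}\to S$ is affine, so $W_{2n+3}$ is affine. I would then verify that the explicit gluing defining $X_{2n+3}$ realizes it as the outcome of the finite sequence of equivariant affine modifications of $W_{2n+3}$ with centers supported over $E_{2n+3}$ described in that proof; by Lemmas \ref{lem:A1bundle-affmod} and \ref{lem:Torsor-affmod} each such modification is an affine morphism and tensors the structure line bundle by $\mathcal{O}(-E_{2n+3})$, until a $\mathbb{G}_{a}$-torsor is reached. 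As affine morphisms compose and a scheme affine over the affine $S$ is affine, this yields that $X_{2n+3}$ is affine.

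The step I expect to be the main obstacle is exactly this last matching: checking that the concrete two-chart gluing, with its prescribed transition functions for $z,u,v$ and $t$, coincides with the abstract affine modification of $W_{2n+3}$. In practice this reduces to exhibiting explicit global functions on $X_{2n+3}$---for instance $x$, $y$, together with $u_{0}^{2}t_{0}=z_{\infty}^{2}u_{\infty}^{2}t_{\infty}-z_{\infty}$ and $u_{\infty}^{2}t_{\infty}=z_{0}^{2}u_{0}^{2}t_{0}+z_{0}$ (which restrict on $\mathrm{SL}_{2}$ to $-yv$ and $-xu$), and the further functions built from $v_{\infty}$ that encode the additional blow-ups---and then checking that they generate $\mathcal{O}(X_{2n+3})$ and define a closed embedding into an affine space. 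Once this coordinate bookkeeping is done, affineness follows and, together with the preceding steps, completes the proof.
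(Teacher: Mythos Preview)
Your proposal is correct and follows the same overall strategy as the paper: both form the fiber product $W'=W_{1}\times_{\overline{S}_{1}}S_{2n+3}$ (with $W_{1}=W(\mathrm{SL}_{2},2)$ the affine Type~I extension of Example~\ref{subsec:Homogeneous-Ga-torsors}), observe that $W'$ is affine because $\tau_{2n+3,1}$ is affine, and then show that $X_{2n+3}$ is affine over $W'$.

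The only difference is in the execution of this last step. You propose to match the two-chart gluing of $X_{2n+3}$ with a sequence of equivariant affine modifications of $W'$ as in the proof of Theorem~\ref{thm:Affine-Ga-extensions-A2}, and you flag this matching as the main obstacle. The paper bypasses the bookkeeping entirely by writing down the composite morphism in one stroke: the local assignments $w_{0}=u_{0}^{2}t_{0}$ and $w_{\infty}=u_{\infty}^{2}t_{\infty}$ glue to a global $\mathbb{G}_{a}$-equivariant affine morphism $\beta:X_{2n+3}\to W'$, since
\[
z_{0}^{2}w_{0}+z_{0}=z_{0}^{2}u_{0}^{2}t_{0}+z_{0}=u_{\infty}^{2}\bigl(t_{0}+z_{0}^{-1}u_{0}^{-2}\bigr)=u_{\infty}^{2}t_{\infty}=w_{\infty},
\]
which is exactly the transition law on $W'$. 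Affineness of $X_{2n+3}$ then follows immediately. You had in fact already isolated the relevant functions $u_{0}^{2}t_{0}$ and $u_{\infty}^{2}t_{\infty}$ in your final paragraph; the point is simply to interpret them as the components of a morphism to $W'$ rather than as ingredients of a closed embedding into affine space, and then your anticipated obstacle disappears.
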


\begin{proof}
Let $j_{1}:\mathrm{SL}_{2}\hookrightarrow W=W(\mathrm{SL}_{2},2)$
be the $\mathbb{G}_{a}$-extension of $\mathrm{SL}_{2}$ into a locally
trivial $\mathbb{A}^{1}$-bundle $\theta:W\rightarrow\overline{S}_{1}$
with affine total space constructed in Example \ref{subsec:Homogeneous-Ga-torsors}.
Recall that the image of $j_{1}$ coincides with the restriction of
$\theta$ to $\overline{S}_{1}\setminus\overline{E}_{1}=\mathbb{A}^{2}\setminus\{o\}$.
With our choice of coordinates, the open subsets $W_{0}=\theta^{-1}(q^{-1}(U_{0}))$
and $W_{\infty}=\theta^{-1}(q^{-1}(U_{\infty}))$ of $W$ are respectively
isomorphic to $p^{-1}(U_{0})\times\mathrm{Spec}(k[w_{0}])$ and $p^{-1}(U_{\infty})\times\mathrm{Spec}(k[w_{\infty}])$
glued over $U_{0}\cap U_{\infty}$ by the isomorphism 
\[
(z_{0},u_{0},w_{0})\mapsto(z_{\infty},u_{\infty},w_{\infty})=(z_{0}^{-1},z_{0}u_{0},z_{0}^{2}w_{0}+z_{0}).
\]
The $\mathbb{G}_{a}$-action on $W_{0}$ and $W_{\infty}$ are given
respectively by $\alpha\cdot(z_{0},u_{0},w_{0})=(z_{0},u_{0},w_{0}+\alpha u_{0}^{2})$
and $\alpha\cdot(z_{\infty},u_{\infty},w_{\infty})=(z_{\infty},u_{\infty},w_{\infty}+\alpha u_{\infty}^{2})$. 

Let $W'=W\times_{\overline{S}_{1}}S_{2n+3}$, equipped with the natural
lift of the $\mathbb{G}_{a}$-action on $W$. Since $\tau_{2n+3,1}:S_{2n+3}\rightarrow\overline{S}_{1}$
restricts to an isomorphism over $\overline{S}_{1}\setminus\overline{E}_{1}$,
the composition $j'=\tau_{2n+3,1}^{-1}\circ j_{1}:\mathrm{SL}_{2}\rightarrow W'$
is a $\mathbb{G}_{a}$-equivariant open embedding. Furthermore, since
$W$ is affine and $\tau_{2n+3,1}$ is an affine morphism, it follows
that $W'$ is affine. By construction, $W'$ is covered by the two
open subsets 
\[
\begin{cases}
W'_{0}=W\times_{p^{-1}(U_{0})}S_{2n+3,0} & \simeq S_{2n+3,0}\times\mathrm{Spec}(k[w_{0}])\\
W'_{\infty}=W\times_{p^{-1}(U_{\infty})}S_{2n+3,\infty} & \simeq S_{2n+3,\infty}\times\mathrm{Spec}(k[w_{\infty}]).
\end{cases}
\]
The local $\mathbb{G}_{a}$-equivariant morphisms 
\[
\begin{cases}
\beta_{0}:X_{2n+3,0}=S_{2n+3,0}\times\mathrm{Spec}(k[t_{0}])\rightarrow W_{0}'\\
\beta_{\infty}:X_{2n+3,\infty}=S_{2n+3,\infty}\times\mathrm{Spec}(k[t_{\infty}])\rightarrow W_{\infty}'
\end{cases}
\]
 of schemes over $S_{2n+1,0}$ and $S_{2n+3,\infty}$ respectively
defined by $t_{0}\mapsto w_{0}=u_{0}^{2}t_{0}$ and $t_{\infty}\mapsto w_{\infty}=u_{\infty}^{2}t_{\infty}$
glue to a global $\mathbb{G}_{a}$-equivariant birational affine morphism
$\beta:X_{2n+3}\rightarrow W'$, restricting to an isomorphism over
$S_{2n+3}\setminus E_{2n+3}\simeq\mathbb{A}^{2}\setminus\{o\}$. Summing
up, $X_{2n+3}$ is affine over $W'$ hence affine, and the composition
$\beta^{-1}\circ j':\mathrm{SL}_{2}\hookrightarrow X_{2n+3}$ is a
$\mathbb{G}_{a}$-equivariant open embedding which realizes $\pi:X_{2n+3}\rightarrow\mathbb{A}^{2}$
as a $\mathbb{G}_{a}$-extension of $\mathrm{SL}_{2}$ of Type II
with affine total space. By construction, $\pi_{2n+3}^{-1}(o)=q^{-1}(2E_{2n+3})$
is isomorphic to $\mathbb{A}^{2}$, with multiplicity two, while the
geometric quotient $X_{2n+3}/\mathbb{G}_{a}$ is isomorphic to $S_{2n+3}$. 
\end{proof}
\begin{rem}
For every $n\geq1$, the birational morphism $S_{2(n+1)+3,\infty}\rightarrow S_{2n+3,\infty}$,
$(z_{\infty},u_{\infty},v_{\infty})\mapsto(z_{\infty},u_{\infty},u_{\infty}v_{\infty})$
extends to a birational morphism $S_{2(n+1)+3}\rightarrow S_{2n+3}$
which lifts in turn in a unique way to a $\mathbb{G}_{a}$-equivariant
birational morphism $\gamma_{n+1,n}:X_{2(n+1)+3}\rightarrow X_{2n+3}$.
So in a similar way as for the family constructed in $\S$ \ref{subsec:TypeA},
the family of threefolds $X_{2n+3}$, $n\geq1$, form a tower of $\mathbb{G}_{a}$-equivariant
affine modifications of the initial one $X_{5}$. 
\end{rem}

\bibliographystyle{amsplain}

\end{document}